\documentclass[10pt, a4paper]{amsart}
\usepackage{amsthm}
\usepackage[dvips]{graphicx}
{\theoremstyle{definition}
\newtheorem{dfn}{Definition}}
\newtheorem{prop}[dfn]{Proposition}

\newtheorem{thm}[dfn]{Theorem}
{\theoremstyle{definition}
\newtheorem{rem}[dfn]{Remark}}
\newtheorem{lem}[dfn]{Lemma}

{\theoremstyle{definition}
\newtheorem{exa}[dfn]{Example}}

\usepackage[top=20truemm,bottom=20truemm,left=25truemm,right=25truemm]{geometry}
\usepackage{amsmath,amssymb,amscd,bm,graphicx,tikz-cd,upgreek,dsfont,amsfonts,tikz,enumitem,euscript}

\usepackage[colorlinks=true,
    citecolor=denim,
    linkcolor=alizarin,
    urlcolor=lightseagreen]{hyperref}
    
\definecolor{alizarin}{rgb}{0.82, 0.1, 0.26}
\definecolor{azure(colorwheel)}{rgb}{0.0, 0.5, 1.0}
\definecolor{blue(pigment)}{rgb}{0.2, 0.2, 0.6}
\definecolor{denim}{rgb}{0.08, 0.38, 0.74}
\definecolor{mint}{rgb}{0.24, 0.71, 0.54}
\definecolor{parisgreen}{rgb}{0.31, 0.78, 0.47}
\definecolor{persiangreen}{rgb}{0.0, 0.65, 0.58}
\definecolor{seagreen}{rgb}{0.18, 0.55, 0.34}
\definecolor{shamrockgreen}{rgb}{0.0, 0.62, 0.38}
\definecolor{green(pigment)}{rgb}{0.0, 0.65, 0.31}
\definecolor{cadmiumgreen}{rgb}{0.0, 0.42, 0.24}
\definecolor{lightseagreen}{rgb}{0.13, 0.7, 0.67}
\definecolor{mediumseagreen}{rgb}{0.24, 0.7, 0.44}
\definecolor{pinegreen}{rgb}{0.0, 0.47, 0.44}
\definecolor{tealgreen}{rgb}{0.0, 0.51, 0.5}

\usepackage{color}
\usepackage{comment}

\newcommand{\bC}{\mathbb{C}}
\newcommand{\bF}{\mathbb{F}}
\newcommand{\bH}{\mathbb{H}}

\newcommand{\bQ}{\mathbb{Q}}

\newcommand{\bZ}{\mathbb{Z}}

\begin{document}

\title[Twisted Alexander polynomials of tunnel number one Montesinos knots]{Twisted Alexander polynomials of tunnel number one Montesinos knots} 
\subjclass{Primary 57M27 ; Secondary 57M25}
\keywords{twisted Alexander polynomials, Montesinos knots, two-bridge knots , pretzel knots, tunnel number}

% The first author
\author[A. Aso]{Airi Aso}
\address{Depertment of Mathematical sciences
  Graduate School of Science 
  Tokyo Metropolitan University
  1-1 Minamiohsawa, Hachioji-shi, Tokyo, 192-0397 JAPAN} 
\email{aso-airi@ed.tmu.ac.jp}

% Abstract
\begin{abstract}
 We calculate the twisted Alexander polynomials of all tunnel number one Montesinos knots associated to their $SL_2(\bC)$-representations and obtain their leading coefficients and degrees.
 As a corollary, we get some interesting examples, that is,  nonfibered knots with monic Alexander polynomials which have non-monic twisted Alexander polynomials.
\end{abstract}

% \maketitle is after abstract
\maketitle

\section{Introduction}

The twisted Alexander polynomial is a generalization of the Alexander polynomial, and it is defined for the pair of a finitely presented group and its representations. 
The notion of twisted Alexander polynomials was introduced by Wada \cite{W} and Lin \cite{L} independently in 1990s. 
The definition of Lin is for knots in $S^3$ and the definition of Wada is for finitely presented groups. 
In this paper, we use Wada's twisted Alexander polynomials defined by the following;

\begin{dfn}\label{def of TAP}
Let $K$ be a knot in $S^3$ and
\[
G(K) = \langle x_1, \ldots ,x_n \ \vline \  r_1, \ldots , r_{n-1}  \rangle
\]
be the knot group of $K$, that is, the fundamental group of the knot exterior. 
%$\langle x_1, \cdots ,x_n \rangle$ denote the free group generated by $x_1, \cdots ,x_n $ 
%and $\phi: \bZ [\langle x_1 , \cdots  ,x_n \rangle] \to \bZ G(K)$ the natural homomorphism.
Then, the {\it twisted Alexander polynomial} of $K$ associated to a representation $\rho: G(K) \to SL_n (\bF)$ is given by
\[
\Delta_{K,\rho} (t)=\displaystyle \frac{\det A_{\rho,k}}{\det [(\rho \otimes \frak{a}) \circ \phi(x_k-1)]},
\]
where $\frak{a}:\bZ G(K) \to \bZ [ t,t^{-1} ]$ is the abelianization of the group ring $\bZ G(K)$, 
$\phi: \bZ \Gamma \to \bZ G(K)$ is the natural ring homomorphism of the free group $\Gamma$  generated by $x_1, \cdots ,x_n $ and $A_{\rho,k}$ is defined for any $1 \le k \le n$ as follows:

$A_{\rho,k}$ is the $d (n-1)\times d (n-1)$ matrix defined by 
\[
\left(
\begin{array}{cccccc}
A_{1,1}  &\cdots & A_{1,k-1} &  A_{1,k+1} & \cdots & A_{1,n} \\
  \vdots & & \vdots & \vdots & & \vdots\\
A_{n-1,1}  & \cdots & A_{n-1,k-1} & A_{n-1,k+1} & \cdots & A_{n-1,n} \\
\end{array}
\right),
\]
where
\[
A_{i,j} = (\rho \otimes \frak{a}) \circ \phi \left( \frac{\partial r_i}{\partial x_j}\right).
\]
Here $\displaystyle \frac{\partial}{\partial x_j} :\bZ \Gamma \to \bZ \Gamma$ denotes the Fox derivative with respect to $x_j$.
\end{dfn}

By Kitano and Morifuji \cite{KM}, it is proved that Wada's twisted Alexander polynomials of the knot groups for any nonabelian representations into $SL_2(\bF)$ over a field $\bF$ are polynomials. 
As a corollary of the claim, they also showed that if $K$ is a fibered knot of genus $g$, then its twisted Alexander polynomials are monic and has degree $4g-2$ for any nonabelian $SL_2(\bF)$-representations. 
It is also showed that there exists a nonfibered knot which has an $SL_2(\bC)$-representation such that the twisted Alexander polynomial of the representation is monic (see \cite{GoMo}).
Dunfield, Friedl and Jackson \cite{DFJ} conjectured that the twisted Alexander polynomials of hyperbolic knots
% i.e.  knots whose complement admits a complete hyperbolic metric of finite volume, 
associated to their holonomy representations into $SL_2(\bC)$ 
%(the lift of the representation into the group of orientation-preserving isometries of the hyperbolic 3-space $\bH^3$) 
determines the genus and fiberedness of the knots.
In fact, they verified the conjecture for all hyperbolic knots up to 15 crossings.

\begin{figure}[h]
  \begin{center}
\includegraphics[clip,width=10cm]{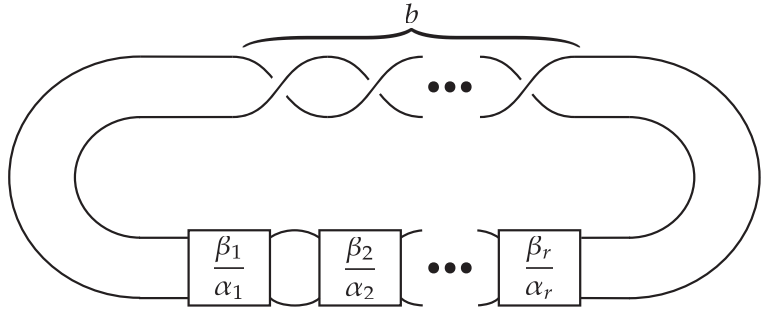}
\caption{A Montesinos knot $M(b;(\alpha_1, \beta_1), (\alpha_2, \beta_2), \cdots ,(\alpha_r, \beta_r))$}
  \end{center}
\end{figure}

A Montesinos knot 
\[
M(b;(\alpha_1, \beta_1), (\alpha_2, \beta_2), \cdots ,(\alpha_r, \beta_r))
\]
is the knot depicted in Figure 1
where the box $\beta_i/\alpha_i$ (with $\gcd(\alpha_i, \beta_i)=1$ for each $i$) represents a rational tangle of the slope $\beta_i/\alpha_i$ and the integer $b$ denotes the number of half twists (if $b$ is negative, each closing of the twists are opposite).
Then, each rational numbers $\beta/\alpha$ has continued fraction expansions, i.e. 
\[
\frac{\beta}{\alpha} =
c_0 + \frac{1}{c_1+\displaystyle \frac{1}{c_2 + \displaystyle \frac{1}{ \ddots +\displaystyle \frac{1}{c_k}}}}
=: [c_0, c_1, \ldots ,c_k],
\]
where $c_0, c_1, \ldots ,c_k$ are integers.
Each integer $c_i$ corresponds to the number of twists depicted in Figure 2.
A rational number $\beta/\alpha$ has some continued fraction expansions, however, they has a continued fraction expansion $[2 c_0, 2 c_1, \ldots , 2c_k]$ where either $\alpha$ or $\beta$ is even (if $\alpha$ is even, then $k$ is odd, and if $\beta$ is odd, then $k$ is even).
The class of Montesinos knots is one of the special classes of knots in $S^3$.
It contains many important families of knots, for example, two-bridge knots and pretzel knots.
The genus of all Montesinos knots are completely determined by Hirasawa and Murasugi \cite{HM}.

\begin{figure}[h]
  \begin{center}
\includegraphics[clip,width=11cm]{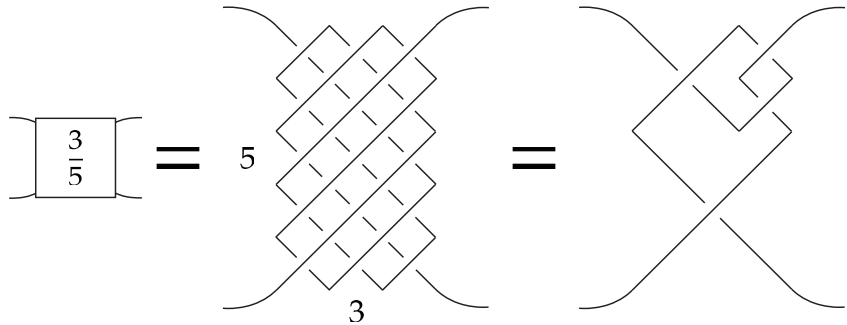}
\caption{A continued fractional expansion of tangle $\frac{3}{5}=\frac{1}{1 + \frac{1}{1+\frac{1}{2}}}$}
  \end{center}
\end{figure}

Let $K$ be a knot in $S^3$.
Then, the tunnel number $\tau(K)$ of $K$ is the minimal number of mutually disjoint arcs, say ${\tau_i}$ "properly embedded" in the pair $(S^3,K)$ such that the complement of an open regular neighborhood of $K \cup (\cup \tau_i)$ is a handle body.

In this paper, we compute the twisted Alexander polynomials of tunnel number one Montesinos knots.
To this end, we have the following theorem;

\begin{thm}[Klimenko--Sakuma \cite{KS}]\label{Klimenko--Sakuma}
A Montesinos knot $M(b;(\alpha_1,\beta_1), (\alpha_2, \beta_2), \cdots , (\alpha_r,\beta_r))$ has tunnel number one if and only if one of the following conditions holds up to cyclic permutation of the indices :\\
{\rm (1)}\ $r=2$\\
{\rm (2)}\ $r=3$, $\alpha_1=2$, and $\alpha_2 \equiv \alpha_3 \equiv 1 \mod 2$\\
{\rm (3)}\  $r=3$, $\beta_2/\alpha_2 \equiv \beta_3/\alpha_3 \equiv \pm1/3 \in \bQ/\bZ$, and $e(K)=b- \sum^r_{i=1} \beta_i/\alpha_i=\pm 1/(3 \alpha_1)$
\end{thm}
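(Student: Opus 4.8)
The plan is to work not with $K$ directly but with its double branched cover and the associated $\pi$-orbifold. Recall that the double branched cover $\Sigma_2(K)$ of a Montesinos knot $M(b;(\alpha_1,\beta_1),\ldots,(\alpha_r,\beta_r))$ is the Seifert fibered space over the base orbifold $S^2(\alpha_1,\ldots,\alpha_r)$ whose orbifold Euler number is exactly $e(K)=b-\sum_i \beta_i/\alpha_i$; this is precisely why that quantity reappears in case (3). A Montesinos knot is strongly invertible, and the covering involution together with the strong inversion exhibits $(S^3,K)$ as a quotient orbifold $O_K$ with underlying space $S^3$ and singular locus $K$ of cone angle $\pi$. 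The key reduction is the standard dictionary between unknotting tunnels and the $\pi$-orbifold group: an invariant unknotting tunnel descends to a presentation of $\pi_1^{\mathrm{orb}}(O_K)$ by two elements of order two (two half-turns), so that $\tau(K)=1$ becomes equivalent to $\pi_1^{\mathrm{orb}}(O_K)$ being generated by two elements of order $2$. First I would make this reduction precise, using the strong invertibility of Montesinos knots and the orbifold form of the genus-two handlebody condition.

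Second, I would compute $\pi_1^{\mathrm{orb}}(O_K)$ for a Montesinos knot as an extension of the Seifert structure on $\Sigma_2(K)$ by the involution, obtaining a group built from the orbifold fundamental group of $S^2(\alpha_1,\ldots,\alpha_r)$ together with the data $(\beta_i,b)$. The problem then becomes purely group-theoretic: decide exactly when such a group is generated by two half-turns. Here I would invoke the classification of two-generator discrete groups generated by elements of order two, which is the technical engine behind the cited result and which forces the base orbifold to be highly restricted. Since $r$ controls the minimal number of generators, two-involution generation immediately obstructs $r\ge 4$ and pushes toward either $r=2$, or $r=3$ with at least one cone order equal to $2$, or an exceptional $(2,3,n)$-type configuration. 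Matching the arithmetic of the two half-turn generators against the Seifert invariants should then yield precisely the three cases: $r=2$; $r=3$ with $\alpha_1=2$ and the remaining orders odd; and $r=3$ with $\beta_2/\alpha_2\equiv\beta_3/\alpha_3\equiv\pm1/3$ and the Euler-number constraint $e(K)=\pm1/(3\alpha_1)$.

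For the converse direction I would construct an explicit unknotting tunnel in each family, equivalently exhibiting the two generating half-turns. For $r=2$ this is essentially the two-bridge picture: the Seifert space over $S^2(\alpha_1,\alpha_2)$ fits the two-generator framework directly, and the tunnel is the obvious arc joining the two rational tangles. The two $r=3$ families require a tunnel adapted to the exceptional fiber of order $2$ in case (2), and to the symmetric $\pm1/3$-tangles realizing a $(2,3,\cdot)$-type quotient in case (3); in each case one verifies that drilling the tunnel produces a genus-two handlebody.

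The main obstacle I expect is the delicate matching in case (3): showing that $\beta_2/\alpha_2\equiv\beta_3/\alpha_3\equiv\pm1/3$ together with $e(K)=\pm1/(3\alpha_1)$ is exactly the arithmetic signature cut out by two-involution generation, and that no other $r=3$ configuration slips through. This is where the orbifold-Euler-number bookkeeping must be reconciled with the fine classification of two-generator groups, and it is the step that does not reduce to a routine count; the $r=2$ and $\alpha_1=2$ cases are comparatively transparent once the reduction to half-turn generation is in place.
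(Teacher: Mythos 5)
There is nothing in the paper to compare your argument against: Theorem~\ref{Klimenko--Sakuma} is stated as a quoted result, attributed to Klimenko--Sakuma \cite{KS}, and the paper never proves it --- it only uses the trichotomy (1)--(3) to organize the computations of Sections 2--4. So the only meaningful review is of your outline on its own terms. On strategy you are aligned with the cited source: the title of \cite{KS} is precisely the classification of two-generator discrete subgroups of $\mathrm{Isom}(\bH^2)$ containing orientation-reversing elements, and the intended application is the one you describe --- pass to the $\pi$-orbifold $O_K$, use the Seifert fibration of the double branched cover over $S^2(\alpha_1,\ldots,\alpha_r)$ with Euler number $e(K)$ to realize $\pi_1^{\mathrm{orb}}(O_K)$ as a discrete group of isometries (the meridional involutions acting on the hyperbolic base as orientation-reversing isometries), and feed two-generation into that classification.

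As a proof, however, the outline has concrete gaps. (a) The reduction is overstated: ``$\tau(K)=1$ if and only if $\pi_1^{\mathrm{orb}}(O_K)$ is generated by two elements of order two'' is a theorem only in the forward direction, and even there your phrasing via an \emph{invariant} tunnel silently uses the nontrivial fact that an unknotting tunnel can be isotoped to be equivariant under the strong inversion; the cleaner route is that tunnel number one implies $G(K)$ is generated by two meridians, whence the $\pi$-orbifold group is generated by two involutions. The backward implication is not a general fact at all, and your proposal in effect concedes this by proving the converse with explicit tunnels, so the word ``equivalent'' should be deleted and the logic restructured as: necessity via group theory, sufficiency via construction. (b) The classification of two-generator subgroups of $\mathrm{Isom}(\bH^2)$ applies only when the base orbifold is hyperbolic; for $r=3$ the spherical and Euclidean triples (such as $(2,3,3)$, $(2,3,5)$, $(2,3,6)$, $(3,3,3)$) and the entire $r=2$ case need separate treatment, which you do not mention. (c) The claim that two-involution generation ``immediately obstructs $r\ge 4$'' needs an actual rank argument for the Fuchsian quotient, and the two steps that constitute the real content --- showing that the arithmetic of case (3), including $e(K)=\pm 1/(3\alpha_1)$, is exactly what two-involution generation cuts out, and exhibiting tunnels for each listed family --- are explicitly deferred. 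In short, this is the right road map and almost certainly the one behind \cite{KS}, but the proposal stops where the proof begins.
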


It is known that if $K$ is a fibered knot of genus $g$, then its Alexander polynomial $\Delta_{K}(t)$ is monic and $\deg(\Delta_{K}(t)) = 2 g$.
In \cite{HM}, it is shown that most of tunnel number one Montesinos knots are fibered if their Alexander polynomials are monic.
In this paper, as a corollary of the calculation of tunnel number one Montesinos knots, we consider the twisted Alexander polynomials of the exceptional knots i.e. nonfibered knots which has monic Alexander polynomials.

In Section 2, 3, and 4, we consider the twisted Alexander polynomial of each of these cases.
In Section 2, we consider the case (1), that is, two-bridge knots.
It is a special class of knots which contains twist knots and has many interesting properties \cite{BZ, H}.
They are alternating and algebraic knots, and have been completely classified \cite{S}. 
There are two kinds of famous projections of two-bridge knots and links, i.e. the Schubert presentations and the Conway presentations.
In this paper, we use the Conway presentations $C(2m_0, -2m_1, \ldots, 2m_{k-1}, -2m_k)$ where $k$ is odd; these presentation contains all two-bridge knots, and if $k$ is even, then they are two-bridge links. 
Since two-bridge knots and links are alternating, their genus are obtained from the degree of their Alexander polynomials \cite{C, Mu1, Mu2}.
For a two-bridge link (knot) $L=C(2c_1, 2c_2, \ldots, 2c_l)$, it is known that the genus $g(L)$ and the leading coefficient $\gamma(L)$ of the Alexander polynomial are given by
\begin{align*}
g(L) &=  \frac{1}{2} (l-\mu +1),\\
\gamma(L) &= \prod_{i=1}^l |c_i|,
\end{align*}
where $\mu$ is the number of the components of $L$ \cite{BZ}.
On the other hand, it is known that the twisted Alexander polynomials of two-bridge knots associated to their parabolic representations are nontrivial \cite{SW}.
The twisted Alexander polynomials of some class of two-bridge knots were computed, for example, genus one two-bridge knots which contains twist knots \cite{T1, MT}.
We compute the twisted Alexander polynomials of all two-bridge knots associated to their $SL_2(\bC)$-representations and obtain their leading coefficients and their degree explicitly.
We also get the Alexander polynomials and make sure that the result satisfies above equations.

In Section 3, we consider the case (2). 
The family of knots which satisfies the condition (2) is a huge family which contains all $(-2, 2n+1, 2m+1)$-pretzel knots.
The author computed the twisted Alexander polynomials of $(-2,3,2n+1)$-pretzel knot associated with their family of $SL_2(\bC)$-representations which contains their holonomy representations [A].
This case is quite important because the fiberedness and the genus of knots are not determined by their Alexander polynomials. 
It is known that there exist nonfibered knots with monic Alexander polynomials, and
since knots of the case (2) are not alternating, the degree of the Alexander polynomial is not always $2g$.
As in the case (1), we compute the twisted Alexander polynomials of all knots which satisfy the condition (2) associated to their $SL_2(\bC)$-representations and explicitly obtain their leading coefficients and their degree.
As a corollary, we show that the twisted Alexander polynomials of the exceptional cases i.e. nonfibered knots with monic Alexander polynomials, may have degree $4g-2$ and be non-monic polynomials.

In Section 4, we consider the case (3). 
Knots which satisfies the condition (3) are denoted by
\[
K_n=M(0;(3n+2,-2n-1), (3,1), (3,1))
\]
where $n$ is an integer \cite{MSY}.
In this case, the fiberedness of knots are determined by their Alexander polynomials.
We consider both cases $n$ is odd and even, and compute the twisted Alexander polynomials of each cases associated to their $SL_2(\bC)$-representations.
We explicitly obtain the degree and all the coefficients in this case.

{\it Acknowledgement} :
The author would like to thank Professor Yoshiyuki Yokota for supervising and giving helpful comments. She also would like to thank Professor Seiichi Kamada for giving valuable advice about the continued fractional expansions of two-bridge knots.

\section{The case (1)}
In this section, we give the presentation of knot groups of given two-bridge knots and compute their twisted Alexander polynomials associated to their $SL_2(\bC)$ representations.
Throughout this section, $K$ denotes the knot as in Figure 3, where $k$ is odd and  $2m_0, -2m_1, \cdots , -2m_k$ denotes the numbers of half twists in each of boxes.

\subsection{Main theorem}

\begin{figure}[h]
  \begin{center}
\includegraphics[clip,width=11cm]{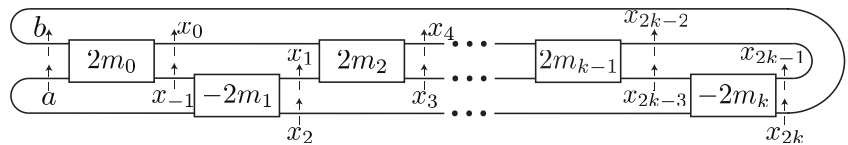}
\caption{Two-bridge knots}
  \end{center}
\end{figure}

If we put generators $a,b, x_{-1}, \cdots, x_{2k}$ as in Figure 3, the generators on the right hand side of each boxes are written by
\begin{align*}
x_{2i-1} &=
(x_{2i-3}^{(-1)^i} x_{2i-4}^{(-1)^i})^{m_i} x_{2i-3} (x_{2i-3}^{(-1)^i} x_{2i-4}^{(-1)^i})^{-m_i},\\
x_{2i} &=
 (x_{2i-3}^{(-1)^i} x_{2i-4}^{(-1)^i})^{m_i} x_{2i-4} (x_{2i-3}^{(-1)^i} x_{2i-4}^{(-1)^i})^{- m_i} , 
 \end{align*}
where we consider
$x_{-4}=b, \ x_{-3}=a, \ x_{-2}=a^{-1}$.
Then, we denote these relations by $r_{2i-1}$ and $r_{2i}$ for $0 \leq i \leq k$.
We also have two relations 
\begin{align*} 
x_{2k-1} =&\ x_{2k-2}^{-1},\\
x_{2k} =&\ b,
\end{align*}
and denote by $r_{2k+1}$ and $r_{2k+2}$.

Then, we have the following theorem.

\begin{thm}\label{2-bridge knot}
Let $K$ be a knot as in Figure 3, where $k$ is odd and $m_0, m_1, \cdots , m_k \in \bZ-\{ 0 \}$.
Then, the twisted Alexander polynomial $\Delta_{K,\rho}(t)$ of $K$ associated to a representation $\rho : G(K) \to SL_2(\bC)$ is given by
\[
\displaystyle \Delta_{K,\rho}(t) =  \lambda_0 t^0 + \cdots + \lambda_0  t^{2k},
\]
where
 \begin{eqnarray*}
\lambda_0= \prod^k_{i=0} \left| \sum^{|m_i|}_{j=1} \bigl(\rho(x_{2i-3}^{(-1)^i} x_{2i-4}^{(-1)^i})\bigr)^j \right|.
 \end{eqnarray*}
 \end{thm}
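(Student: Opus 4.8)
The plan is to apply Wada's formula (Definition~\ref{def of TAP}) directly to the presentation $\langle a,b,x_{-1},\ldots,x_{2k}\mid r_{-1},r_0,\ldots,r_{2k+2}\rangle$, computing $\det A_{\rho,k}$ and dividing by $\det[(\rho\otimes\mathfrak{a})\circ\phi(b-1)]$ (so I delete the column of $b$). Write $X_i:=x_{2i-3}^{(-1)^i}x_{2i-4}^{(-1)^i}$, so each defining relation $r_{2i-1},r_{2i}$ expresses $x_{2i-1},x_{2i}$ as the conjugates $X_i^{m_i}x_{2i-3}X_i^{-m_i}$ and $X_i^{m_i}x_{2i-4}X_i^{-m_i}$. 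Since every generator is a meridian, $\mathfrak{a}$ sends it to $t$, so $(\rho\otimes\mathfrak{a})$ sends each generator $g$ to $t\rho(g)$ and sends $X_i$ to $t^{2(-1)^i}\rho(X_i)$. The whole point is that Fox-differentiating the power $X_i^{m_i}$ produces, via the power rule $\partial X_i^{m_i}/\partial y=(\sum_{j=0}^{m_i-1}X_i^j)\,\partial X_i/\partial y$ (and its analogue for $m_i<0$), exactly the geometric sums whose determinants assemble into $\lambda_0$. First I would record the Fox derivatives of all relations, noting $\partial r_{2i-1}/\partial x_{2i-1}=\partial r_{2i}/\partial x_{2i}=1$ and that each $r_{2i-1},r_{2i}$ involves only $x_{2i-1},x_{2i}$ together with the strictly earlier generators entering $X_i$.

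Second, I would exploit this triangular dependence. Ordering the generators by index and using the deficiency-one set of relations (dropping the redundant closing relation $x_{2k}=b$), the block $2\times2$ Fox matrix is lower block-triangular apart from the single surviving closing relation $r_{2k+1}$, with identity pivots $I_2$ at the column of each newly defined generator $x_{-1},\ldots,x_{2k}$. Performing block Gaussian elimination with these $2k+2$ pivots back-substitutes each defined generator in terms of $a,b$ and collapses $A_{\rho,k}$ down to a single $2\times2$ determinant: concretely, $\det A_{\rho,k}=\pm\det D$, where $D$ is the $a$-column entry of $r_{2k+1}$ after the substitutions. Along this chain of substitutions the conjugation rule above threads every sum $\sum_{j}X_i^{j}$, $i=0,\ldots,k$, into the single matrix $D$.

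Third, I would extract the degree and the two extreme coefficients. The essential structural feature is that each relation is a conjugation, so the high-degree part $t^{2m_i}$ of $X_i^{m_i}$ is cancelled by the $t^{-2m_i}$ of $X_i^{-m_i}$; this balancing is what forces the $t$-support of $\det A_{\rho,k}$ to run from degree $0$ to degree $2k+2$ \emph{independently} of the sizes $|m_i|$. Dividing by the degree-$2$ denominator $\det(t\rho(b)-I)$, which has nonzero constant term, then yields a genuine polynomial running from degree $0$ to degree $2k$. Finally, reading off the $t^0$- and $t^{2k}$-coefficients of $\det D$, the internal cancellations should leave each tangle contributing precisely $\det(\sum_{j=1}^{|m_i|}\rho(X_i)^{j})$, using $\det\rho(X_i)=1$ both to pass between the $j=0$ and $j=1$ normalizations and to absorb the sign of $m_i$ into $|m_i|$; this produces $\lambda_0$ at both ends, the equality of the two extreme coefficients matching the standard duality of twisted Alexander polynomials.

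The main obstacle will be this last step. Because each $X_i\mapsto t^{2(-1)^i}\rho(X_i)$ spreads its geometric sum across many powers of $t$, the naive leading term of $\det D$ has trivial coefficient $1$, so the product $\prod_i\det(\sum_j\rho(X_i)^j)$ only emerges through genuine cancellation among the substituted terms rather than as a product of separate block determinants. Pinning this down requires careful bookkeeping of the alternating signs $(-1)^i$, the parity of $k$ (which fixes the positions of the closing relations $x_{2k-1}=x_{2k-2}^{-1}$ and $x_{2k}=b$), the case $m_i<0$, and a verification that the denominator divides $\det A_{\rho,k}$ cleanly without disturbing the two extreme coefficients; polynomiality itself is guaranteed in advance by Kitano--Morifuji \cite{KM}.
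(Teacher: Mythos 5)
Your overall frame is the same as the paper's (Proposition~\ref{TAP of 2-bridge knot}): delete the $b$-column, use the identity pivot of each defining relation to do block Gaussian elimination collapsing the Fox matrix to a single $2\times 2$ block, and divide by $\det\Phi(b-1)=t^2-\mathrm{tr}\,\rho(b)\,t+1$; keeping $r_{2k+1}$ rather than $r_{2k+2}$ as the surviving closing relation is an immaterial variant. But the proposal breaks at an earlier point than the one you flag: the assertion ``since every generator is a meridian, $\mathfrak{a}$ sends it to $t$'' is false for this presentation, and your entire degree analysis is built on it. The relations determine $\mathfrak{a}$: $x_{2i-1}$ is conjugate to $x_{2i-3}$, $x_{2i}$ is conjugate to $x_{2i-4}$ with $x_{-2}=a^{-1}$, and $r_{2k+1}$ reads $x_{2k-1}=x_{2k-2}^{-1}$; if every generator went to $t$, that last relation would force $t=t^{-1}$, so your $\mathfrak{a}$ is not even a homomorphism on $G(K)$. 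The correct abelianization alternates, $\mathfrak{a}(b)=\mathfrak{a}(x_{2i})=t$ and $\mathfrak{a}(a)=\mathfrak{a}(x_{2i-1})=t^{-1}$ (the two strands through each box are anti-parallel in Figure 3), so each twist word satisfies $\mathfrak{a}\bigl(x_{2i-3}^{(-1)^i}x_{2i-4}^{(-1)^i}\bigr)=1$ and $\Phi$ sends it to $\rho(X_i)$ with \emph{no} power of $t$. Hence every geometric sum $\sum_{j}\rho(X_i)^j$ sits at a single $t$-degree --- this is visible in Appendix A, where each $R_i$, $R_i'$ has exactly two powers of $t$ no matter how large $|m_i|$ is --- and this, not a cancellation of $t^{2m_i}$ against $t^{-2m_i}$, is why the degrees are independent of the $|m_i|$ and why the extreme coefficients are products of determinants of these sums rather than $1$. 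Your ``main obstacle'' (sums spread over $t^{\pm 2j}$, a naive monic leading term, $\prod_i\det\bigl(\sum_j\rho(X_i)^j\bigr)$ having to emerge through hidden cancellation) is an artifact of the wrong $\mathfrak{a}$; under it the Fox-matrix entries have $t$-spread growing with the $|m_i|$ and the claimed formula would not come out.

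Even after the conventions are repaired, what your outline defers is precisely the content of the paper's proof, not routine bookkeeping. One needs (i) the induction of Proposition~\ref{N_i}, which pins down the exact $t$-window of each accumulated block $N_{2i-1},N_{2i}$ (split by the parity of $i$), so that $|N_{2k}|$ runs from $t^{-k+1}|N_{2k}^{\min}|$ to $t^{k+3}|N_{2k}^{\max}|$ and the division by $t^2-\mathrm{tr}\,\rho(b)\,t+1$ (leading and constant coefficients $1$) leaves exactly these two coefficients at the ends; and (ii) the recursive identification of $N_{2k}^{\max}$ and $N_{2k}^{\min}$, via $N_{2i}^{\max}=-X_{2i}N_{2i-1}^{\max}$ and the two-step recursion that splits off one factor $\sum_j(\cdot)^j$ per tangle, with unimodularity of the $\rho$-matrices used to discard the conjugating factors and the signs $|m_i|/m_i$ handling negative twists, yielding $|N_{2k}^{\max}|=|N_{2k}^{\min}|=\lambda_0$. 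Your appeal to duality cannot substitute for (ii): self-duality of $SL_2(\bC)$ twisted Alexander polynomials identifies the two extreme coefficients only up to a unit, and in any case computes neither of them. As it stands, the proposal is a correct skeleton resting on an incorrect abelianization, with the decisive degree induction and coefficient extraction missing.
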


\subsection{Proof of Theorem \ref{2-bridge knot}}
Note that we have the knot group
\[
\pi_1(S^3-K) = \langle b, a, x_{-1}, \cdots, x_{2k}  \ | \   r_{-1} , \cdots r_{2k}, r_{2k+2} \rangle,
\]
and write $\Phi:=(\rho \otimes \frak{a}) \circ \phi$, where $\frak{a}$ and $\phi$ are as in Definition \ref{def of TAP}.
Then we put 
\begin{align*}
R_{-1}=&\ \Phi \left( \frac{\partial r_{-1}}{\partial a} \right) , \ 
R_{0}=  \Phi \left( \frac{\partial r_{0}}{\partial a} \right), \\
R_{1}=&\  \Phi \left( \frac{\partial r_{1}}{\partial a} \right), \ \ \  
R'_{1}=  \Phi \left( \frac{\partial r_{1}}{\partial x_{-1}} \right),\\
R_{2}=&\  \Phi \left( \frac{\partial r_{2}}{\partial a} \right), \ \ \ 
R'_{2}= \Phi \left( \frac{\partial r_{2}}{\partial x_{-1}} \right),
\end{align*}
and 
\begin{align*}
R_{2i-1}=&\ \Phi \left( \frac{\partial r_{2i-1}}{\partial x_{2i-4}} \right), \
R'_{2i-1}= \Phi \left( \frac{\partial r_{2i-1}}{\partial x_{2i-3}} \right), \\
R_{2i}=&\ \Phi \left( \frac{\partial r_{2i}}{\partial x_{2i-4}} \right), \ \ \ \ 
R'_{2i}= \Phi \left( \frac{\partial r_{2i}}{\partial x_{2i-3}} \right) ,
\end{align*}
for $2 \leq i \leq k$.

\begin{prop}\label{TAP of 2-bridge knot}
The twisted Alexander polynomial $\Delta_{K,\rho}(t)$ of $K$ associated to a representation $\rho : G(K) \to SL_2(\bC)$ is given by
\[
\displaystyle \Delta_{K,\rho}(t) = \frac{|N_{2k}|}{t^2 - \mathrm{tr} \rho(b) t +1},
\]
where the matrix $N_{2k}$ is defined by
 \begin{align*}
N_0 &= R_0,\\
N_1 &= R_1 + (-R'_1) R_{-1},\\
N_2 &= R_2 + (-R'_2) R_{-1},
 \end{align*}
 and
  \begin{align*}
N_{2i-1} &= (-R_{2i-1} )N_{2i-4} + (-R'_{2i-1}) N_{2i-3},\\
N_{2i} &= (-R_{2i} )N_{2i-4} + (-R'_{2i}) N_{2i-3},
 \end{align*}
for $2 \le i \le k$.
\end{prop}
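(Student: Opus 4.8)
The plan is to evaluate Wada's formula directly by computing the numerator $\det A_{\rho,b}$, where $A_{\rho,b}$ is the square block matrix obtained from the $(2k+3)\times(2k+4)$ block Jacobian of Fox derivatives (each block $2\times 2$) by deleting the column associated to the generator $b$. With this choice the denominator is $\det\Phi(b-1)$, and since $\mathfrak{a}(b)=t$ and $\rho(b)\in SL_2(\bC)$ has eigenvalues $\lambda,\lambda^{-1}$ with $\lambda+\lambda^{-1}=\mathrm{tr}\,\rho(b)$, one gets $\det\Phi(b-1)=\det(t\rho(b)-I)=t^2-\mathrm{tr}\,\rho(b)\,t+1$, which is exactly the claimed denominator. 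It therefore suffices to show $\det A_{\rho,b}=|N_{2k}|$ (the overall result being well defined up to a unit $\pm t^{j}$, which does not matter).

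First I would record the block structure. Each defining relation $r_j$ for $-1\le j\le 2k$ has the form $x_j=w\,x_{?}\,w^{-1}$, so, writing the relator with $x_j$ in front, $\Phi(\partial r_j/\partial x_j)=I$ and the only other nonzero Fox derivatives of $r_j$ are with respect to the two input generators $x_{2i-3},x_{2i-4}$ (respectively $a,x_{-1}$, or $a,b$, at the initial levels). Ordering the relations $r_{-1},r_0,\dots,r_{2k}$ and the columns $x_{-1},x_0,\dots,x_{2k}$ by index, the corresponding $(2k+2)$-fold block submatrix $L$ is lower triangular with every diagonal block equal to $I$: the highest-index column appearing in row $r_j$ is precisely its diagonal column $x_j$, the remaining couplings $x_{2i-3},x_{2i-4}$ having strictly smaller index. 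Hence $\det L=1$.

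The plan is then to finish with a single block Schur-complement step against $L$. After deleting the column $b$, the remaining data are the column $c$ of $a$-derivatives $\Phi(\partial r_j/\partial a)$, the row $v$ coming from $r_{2k+2}\colon x_{2k}=b$, and the $(r_{2k+2},a)$-block. Since $r_{2k+2}$ involves neither $a$ nor any $x_j$ except $x_{2k}$, the $(r_{2k+2},a)$-block vanishes and $v$ has a single nonzero block $\Phi(\partial r_{2k+2}/\partial x_{2k})=I$ in the column $x_{2k}$. The Schur complement then gives $\det A_{\rho,b}=\det L\cdot\det(-v L^{-1}c)=\det(-y_{2k})$, where $y=L^{-1}c$ solves $Ly=c$. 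Because $L$ is lower triangular with identity diagonal, $y$ is computed by forward substitution, and the row $r_{2i-1}$ (resp. $r_{2i}$) equation reads
\[
y_{2i-1}+R_{2i-1}\,y_{2i-4}+R'_{2i-1}\,y_{2i-3}=\Phi(\partial r_{2i-1}/\partial a)=0 ,
\]
which is exactly $N_{2i-1}=(-R_{2i-1})N_{2i-4}+(-R'_{2i-1})N_{2i-3}$, and likewise for $N_{2i}$; the initial rows $r_{-1},r_0,r_1,r_2$ give $y_{-1}=R_{-1}$, $y_0=R_0=N_0$, $y_1=R_1-R'_1R_{-1}=N_1$, $y_2=R_2-R'_2R_{-1}=N_2$. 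Thus $y_j=N_j$ for all $j$, so $\det A_{\rho,b}=\det(-N_{2k})=|N_{2k}|$, and dividing by $t^2-\mathrm{tr}\,\rho(b)\,t+1$ yields the proposition.

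The main obstacle I anticipate is not conceptual but a matter of conventions and bookkeeping: one must fix the placement of $x_j^{\pm1}$ in each relator (and the orientation of $r_{2k+2}$) so that every diagonal block is genuinely $I$ and so that the signs $-R_{2i-1},-R'_{2i-1}$ produced by forward substitution reproduce the stated recursion. One must also check that the initial levels $i=0,1$, where $a$, $b$ and the identification $x_{-2}=a^{-1}$ play a special role, still fit the lower-triangular pattern and yield the listed base cases $N_0,N_1,N_2$. Computing the input blocks $R_{2i-1},R'_{2i-1},R_{2i},R'_{2i}$ from the twisting words $(x_{2i-3}^{(-1)^i}x_{2i-4}^{(-1)^i})^{m_i}$ is the one genuinely computational ingredient, but it is independent of the triangular reduction and feeds into it only through the already-named matrices.
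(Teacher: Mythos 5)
Your proposal is correct and takes essentially the same approach as the paper: the paper's proof is exactly the block Gaussian elimination you describe, performed as explicit row operations on the big block determinant (rows indexed by relations, columns by generators with $b$ deleted), and the recursion defining $N_j$ arises there precisely as your forward substitution $Ly=c$. Your Schur-complement packaging of the final step (isolating the row of $r_{2k+2}$ and the column of $a$ to get $\det(-vL^{-1}c)=|N_{2k}|$) is just a compact restatement of the paper's last expansion, so there is no substantive difference.
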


\begin{proof}
%\underline{\large Proof of Proposition 4}\\
%\hspace{2mm}

Let $A_{\rho,b}$ be the matrix obtained by removing the first 2 columns of twisted Alexander matrix $A_{\rho}$, whose size is $2(2k + 3) \times 2(2k + 4)$.

%\hspace{2mm}
Then, by the definition of twisted Alexander polynomials, 
\[
\Delta_{K,\rho}(t) = \frac{|A_{\rho,b}|}{|\Phi(b-1)|}= \frac{|A_{\rho,b}|}{t^2 - \mathrm{tr} \rho(b) t +1}.
\]

%\hspace{2mm}
Now, we can transform $|A_{\rho,b}|$ by
\begin{align*}
|A_{\rho,b}|
&= \left|
\begin{array}{ccccccccccc}
R_{-1} & E & O & O &  \ldots & O & O & O & O & O\\ 
R_{0} & O & E & O & \ldots  & O & O & O & O & O\\ 
R_{1} & R'_{1} & O & E &  & O & O & O & O & O\\ 
R_{2} & R'_{2} & O & O &  \ddots & O & O & O & O & O\\ 
O & O & R_{3} & R'_{3} &   & O & O & O & O & O\\ 
O & O & R_{4} & R'_{4} &  & O & O & O & O & O\\
 \vdots & \vdots &  &  & \ddots &  &  & \ddots &  & \vdots\\
O & O & O & O &  & R_{2k-1} & R'_{2k-1} & O & E & O\\ 
O & O & O & O &  & R_{2k} & R'_{2k} & O & O & E\\
O & O & O & O &  \ldots & O & O & O & O & E\\
\end{array}
\right| \\
&=
\left|
\begin{array}{ccccccccccccc}
R_{-1} & E & O & O &   \ldots & O & O & O & O & O\\ 
N_{0} & O & E & O &  \ldots  & O & O & O & O & O\\ 
N_{1} & O & O & E &  & O & O & O & O & O\\ 
N_{2} & O & O & O &  \ddots & O & O & O & O & O\\ 
N_{3} & O & O & O &    & O & O & O & O & O\\ 
N_{4} & O & O & O &  & O & O & O & O & O\\
 \vdots & \vdots & \vdots & \vdots  &  & \vdots & \vdots & \ddots &  & \vdots\\
N_{2k-1} & O & O & O &  \ldots & O & O & O & E & O\\ 
N_{2k} & O & O & O &  \ldots & O & O & O & O & E\\
O & O & O & O &  \ldots & O & O & O & O & E\\
\end{array}
\right| \\
&=  |N_{2k}|,
\end{align*}
where
 \begin{align*}
N_0 &= R_0,\\
N_1 &= R_1 + (-R'_1) R_{-1},\\
N_2 &= R_2 + (-R'_2) R_{-1},
 \end{align*}
 and
\begin{align*}
N_{2i-1} &= (-R_{2i-1} )N_{2i-4} + (-R'_{2i-1}) N_{2i-3},\\
N_{2i} &= (-R_{2i} )N_{2i-4} + (-R'_{2i}) N_{2i-3},
\end{align*}
for $2 \le i \le k$.
\end{proof}

Now, we compute the matrix $N_{2k}$.
The following proposition gives the highest and the lowest degree of each $N_{2i-1}$ and $N_{2i}$.
\begin{prop}\label{N_i}
If $i$ is even, we have
\[
N_{2i-1} = \sum_{j=-\frac{i}{2}-1}^{\frac{i}{2}} t^j N_{2i-1}^j , \ 
N_{2i} = \sum_{j=-\frac{i}{2}}^{\frac{i}{2}+1} t^j N_{2i}^j ,
\]
and if $i$ is odd, we have
\[
N_{2i-1} = \sum_{j=-\frac{i+1}{2}}^{\frac{i+1}{2}} t^j N_{2i-1}^j , \ 
N_{2i} = \sum_{j=-\frac{i+1}{2}+1}^{\frac{i+1}{2}+1} t^j N_{2i}^j .
\]
\end{prop}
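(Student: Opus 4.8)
The plan is to prove Proposition \ref{N_i} by induction on $i$, using the two recursions for $N_{2i-1}$ and $N_{2i}$ from Proposition \ref{TAP of 2-bridge knot}. Each recursion expresses $N_{2i-1},N_{2i}$ as fixed left multiples of $N_{2i-4}$ and $N_{2i-3}$ by the four matrices $R_{2i-1},R'_{2i-1},R_{2i},R'_{2i}$, so the whole statement reduces to two inputs: the range of powers of $t$ occurring in each of these ``building block'' matrices, and the ranges for $N_{2i-4},N_{2i-3}$ supplied by the inductive hypothesis. Since in a product the degree windows simply add, once the windows of the blocks are pinned down the inductive step becomes a bookkeeping check across the parities of $i$.

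The first and decisive step is therefore to compute the four blocks explicitly. I would rewrite each Fox derivative of the conjugation relators $r_{2i-1}:x_{2i-1}=u_i^{m_i}x_{2i-3}u_i^{-m_i}$ and $r_{2i}:x_{2i}=u_i^{m_i}x_{2i-4}u_i^{-m_i}$, with $u_i=x_{2i-3}^{(-1)^i}x_{2i-4}^{(-1)^i}$, in the closed form $(1-x_{2i-1})\bigl(\sum_{l=0}^{m_i-1}u_i^l\bigr)\,\partial u_i/\partial x_j$ (respectively with $x_{2i}$), plus one extra term $u_i^{m_i}$ that appears precisely for $R'_{2i-1}$ and $R_{2i}$, namely when $x_j$ is the conjugated middle variable. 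The crucial observation is that consecutive meridional generators abelianize with opposite signs — odd-indexed $x$'s to one of $t^{\pm1}$ and even-indexed $x$'s to the other, as forced by the closing relations $x_{2k-1}=x_{2k-2}^{-1}$ and $x_{2k}=b$ — so that $\mathfrak{a}(u_i)=1$ for every $i$. Consequently $\Phi\bigl(\sum_{l=0}^{m_i-1}u_i^l\bigr)=\sum_{l}\rho(u_i)^l$ is a constant matrix carrying no power of $t$ (and likewise, with negative powers of $\rho(u_i)$, when $m_i<0$). Hence each block is a Laurent polynomial in $t$ of width at most one whose window depends only on the parity of $i$ and not on $m_i$; a direct reading gives, in the normalization where odd-indexed generators go to $t$, the windows $R_{2i-1}\in[1,2]$, $R'_{2i-1}\in[0,1]$, $R_{2i}\in[0,1]$, $R'_{2i}\in[-1,0]$ for $i$ even, and the same windows shifted down by one for $i$ odd.

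With these windows in hand I would finish by induction. After computing the base matrices $N_0,N_1,N_2$ directly — which also pins down the overall normalization, i.e. whether the odd generators go to $t$ or $t^{-1}$ — the inductive step substitutes the block windows and the windows for $N_{2i-4},N_{2i-3}$ into the recursions and takes the union of the two summands' degree ranges. Carrying this out in the four parity cases reproduces exactly the intervals in the statement; for instance, for $i$ even one obtains $N_{2i-1}\in[-i/2,\,i/2+1]$ and $N_{2i}\in[-i/2-1,\,i/2]$ in this normalization, which matches the claim after the global substitution $t\leftrightarrow t^{-1}$.

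The main obstacle is concentrated entirely in the second step: simplifying the Fox derivatives to their closed forms and, above all, recognizing that every twisting word $u_i$ has trivial abelianization. This is exactly what removes the apparent $m_i$-dependence of the degrees — the integers $m_i$ survive only inside the constant matrices $\sum_l\rho(u_i)^l$, hence only in the coefficients (ultimately in $\lambda_0$), and never in the span of $t$. Once this is correctly set up, no delicate cancellation occurs: the degree ranges propagate purely additively and the intervals close on the nose, so that the remaining work is only the routine parity bookkeeping together with the explicit base cases $N_0,N_1,N_2$.
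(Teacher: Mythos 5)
Your proof is correct and follows essentially the same route as the paper: induction on $i$, where the inductive step needs only the width-one $t$-degree windows of the blocks $R_{2i-1}, R'_{2i-1}, R_{2i}, R'_{2i}$ — the paper obtains these windows from the explicit Fox-derivative computations collected in Appendix A, while you derive them more conceptually from the closed form of the derivatives of the conjugation relators together with the key identity $\mathfrak{a}\bigl(x_{2i-3}^{(-1)^i}x_{2i-4}^{(-1)^i}\bigr)=1$, but the resulting input to the induction is identical. The one adjustment needed is your normalization: the paper's abelianization sends the odd-indexed generators to $t^{-1}$ and the even-indexed ones to $t$ (as one reads off from $R_{-1}=t^{-1}R_{-1}^{-1}+t^{0}R_{-1}^{0}$ in its proof), and with that convention your windows, and hence the ranges for $N_{2i-1}$ and $N_{2i}$, come out literally as in the statement rather than mirrored under $t\leftrightarrow t^{-1}$.
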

\begin{proof}

If we put $A=\rho(a)$, $B=\rho(b)$ and $X_i=\rho(x_i)$ for $i=-1, \cdots , 2k$, 
then we have
\begin{align*}
N_0 &= R_0 = t^0 R_0^0 + t^1 R_0^1
=: t^0 N_0^0 + t^1 N_0^1.
 \end{align*}
If we put
\begin{align*}
R_{-1}&=t^{-1}R_{-1}^{-1}+t^0R_{-1}^{0},\\
R_{1}&=t^{0}R_{1}^{0}+t^1R_{1}^{1},\\
-R'_{1}&=t^{0}{R'}_{1}^{0}+t^1{R'}_{1}^{1},\\
R_{2}&=t^{1}R_{2}^{1}+t^2R_{2}^{2},\\
-R'_{2}&=t^{1}{R'}_{2}^{1}+t^2{R'}_{2}^{2},
\end{align*}
we have
\begin{align*}
 N_1 &= R_1 + (-R'_1) R_{-1}\\ 
&=t^{-1} {R'}_1^0 R_{-1}^{-1} + t^0 (R_1^0 + {R'}_1^0 R_{-1}^0 + {R'}_1^1 R_{-1}^{-1}) + t^1 (R_1^1 + {R'}_1^1 R_{-1}^0)
 =: t^{-1} N_1^{-1} + t^0 N_1^0 +t^1 N_1^1,\\
N_2 &= R_2 + (-R'_2) R_{-1} \\ 
&=t^{0} {R'}_2^1 R_{-1}^{-1} + t^1 (R_2^1 + {R'}_2^1 R_{-1}^0 + {R'}_2^2 R_{-1}^{-1}) + t^2 (R_2^2 + {R'}_2^2 R_{-1}^0)
=: t^0 N_2^0 + t^1 N_2^1 +t^2 N_2^2.
 \end{align*}
This proves the statement for $i=1$.

Suppose that $N_{0}, N_{1} , \ldots, N_{2h-3}$ satisfy the statement .
If $h$ is even, since we have
\begin{align*}
-R_{2i-1}&=t^{-2}R_{2i-1}^{-2}+t^{-1}R_{2i-1}^{-1},\\
-R'_{2i-1}&=t^{-1}{R'}_{2i-1}^{-1}+t^0{R'}_{2i-1}^{0},\\
-R_{2i}&=t^{-1}R_{2i}^{-1}+t^0 R_{2i}^{0},\\
-R'_{2i}&=t^{0}{R'}_{2i}^{0}+t^1 {R'}_{2i}^{1},
\end{align*}
for $i$ is even and $2 \le i \le k$, then we get
\begin{align*}
N_{2h-1} &= (-R_{2h-1} )N_{2h-4} + (-R'_{2h-1}) N_{2h-3}\\
&= (t^{-2}R_{2h-1}^{-2}+t^{-1}R_{2h-1}^{-1})N_{2(h-2)} + (t^{-1}{R'}_{2h-1}^{-1}+t^0{R'}_{2h-1}^{0}) N_{2(h-1)-1}\\
&= (t^{-2}R_{2h-1}^{-2}+t^{-1}R_{2h-1}^{-1})\sum_{j=-\frac{h-2}{2}}^{\frac{h-2}{2}+1} t^j N_{2(h-2)}^j + (t^{-1}{R'}_{2h-1}^{-1}+t^0{R'}_{2h-1}^{0}) \sum_{j=-\frac{h-1}{2}-1}^{\frac{h-1}{2}} t^j N_{2(h-1)-1}^j\\
&= t^{-\frac{h}{2}-1} (R_{2h-1}^{-2} N_{2(h-2)}^{-\frac{h}{2}+1} + {R'}_{2h-1}^{-1} N_{2(h-1)-1}^{-\frac{h}{2}} ) + \cdots + t^{\frac{h}{2}} {R'}_{2h-1}^{0} N_{2(h-1)-1}^{\frac{h}{2}}\\
&=: t^{-\frac{h}{2}-1} N_{2h-1}^{-\frac{h}{2}-1} + \cdots + t^{\frac{h}{2}} N_{2h-1}^{\frac{h}{2}}.
\end{align*}
Similarly, we have
\begin{align*}
N_{2h} &= (-R_{2h} )N_{2h-4} + (-R'_{2h}) N_{2h-3}\\
&= t^{-\frac{h}{2}} (R_{2h}^{-1} N_{2(h-2)}^{-\frac{h}{2}+1} + {R'}_{2h}^{0} N_{2(h-1)-1}^{-\frac{h}{2}} ) + \cdots + t^{\frac{h}{2}+1} {R'}_{2h}^{1} N_{2(h-1)-1}^{\frac{h}{2}}\\
&=: t^{-\frac{h}{2}} N_{2h}^{-\frac{h}{2}-1} + \cdots + t^{\frac{h}{2}+1} N_{2h}^{\frac{h}{2}}.
\end{align*}
This shows the statement in the case when $i$ is even.

If $h$ is odd, since we have
\begin{align*}
-R_{2i-1}&=t^{-1}R_{2i-1}^{-1}+t^{0}R_{2i-1}^{0},\\
-R'_{2i-1}&=t^{0}{R'}_{2i-1}^{0}+t^1 {R'}_{2i-1}^{1},\\
-R_{2i}&=t^{0}R_{2i}^{0}+t^1 R_{2i}^{1},\\
-R'_{2i}&=t^{1}{R'}_{2i}^{1}+t^2 {R'}_{2i}^{2},
\end{align*}
for $i$ is odd and $2 \le i \le k$, then we get
\begin{align*}
N_{2h-1} &= (-R_{2h-1} )N_{2h-4} + (-R'_{2h-1}) N_{2h-3}\\
&= (t^{-1}R_{2h-1}^{-1}+t^{0}R_{2h-1}^{0})N_{2(h-2)} + (t^{0}{R'}_{2h-1}^{0}+t^1{R'}_{2h-1}^{1}) N_{2(h-1)-1}\\
&= (t^{-1}R_{2h-1}^{-1}+t^{0}R_{2h-1}^{0}) \sum_{j=-\frac{(h-2)+1}{2}+1}^{\frac{(h-2)+1}{2}+1} t^j N_{2(h-2)}^j + (t^{0}{R'}_{2h-1}^{0}+t^1{R'}_{2h-1}^{1}) \sum_{j=-\frac{h-1}{2}-1}^{\frac{h-1}{2}} t^j N_{2(h-1)-1}^j\\
&= t^{-\frac{h+1}{2}} {R'}_{2h-1}^{0} N_{2(h-1)-1}^{-\frac{h+1}{2}}  + \cdots + t^{\frac{h+1}{2}} (R_{2h-1}^{0} N_{2(h-2)}^{\frac{h+1}{2}} + {R'}_{2h-1}^{1} N_{2(h-1)-1}^{\frac{h-1}{2}})\\
&=: t^{-\frac{h+1}{2}} N_{2h-1}^{-\frac{h+1}{2}} + \cdots + t^{\frac{h+1}{2}}  N_{2h-1}^{\frac{h+1}{2}}.
\end{align*}
Similarly, we have
\begin{align*}
N_{2h} &= (-R_{2h} )N_{2h-4} + (-R'_{2h}) N_{2h-3}\\
&= t^{-\frac{h+1}{2}+1} {R'}_{2h}^{1} N_{2(h-1)-1}^{-\frac{h+1}{2}}  + \cdots + t^{\frac{h+1}{2}+1} (R_{2h}^{1} N_{2(h-2)}^{\frac{h+1}{2}} + {R'}_{2h}^{2} N_{2(h-1)-1}^{\frac{h-1}{2}})\\
&=: t^{-\frac{h+1}{2}+1} N_{2h}^{-\frac{h+1}{2}+1} + \cdots + t^{\frac{h+1}{2}+1}  N_{2h}^{\frac{h+1}{2}+1}.
\end{align*}
This shows the statement in the case when $i$ is odd.
\end{proof}

\hspace{2mm}
Then, we put
 \begin{align*}
N_{2i-1}^{\max}
&= 
\begin{cases}
N_{2i-1}^{\frac{i}{2}} = {R'}_{2i-1}^{0} N_{2(i-1)-1}^{\frac{i}{2}}& $if$ \  i \ $is even$\\
N_{2i-1}^{\frac{i+1}{2}} = R_{2i-1}^{0} N_{2(i-2)}^{\frac{i+1}{2}} + {R'}_{2i-1}^{1} N_{2(i-1)-1}^{\frac{i-1}{2}} & $if$ \  i \ $is odd$\\
\end{cases},\\
N_{2i}^{\max}
&= 
\begin{cases}
N_{2i}^{\frac{i}{2}+1} = {R'}_{2i}^{1} N_{2(i-1)-1}^{\frac{i}{2}} & $if$ \  i \ $is even$\\
N_{2i}^{\frac{i+1}{2}+1} = R_{2i}^{1} N_{2(i-2)}^{\frac{i+1}{2}} + {R'}_{2i}^{2} N_{2(i-1)-1}^{\frac{i-1}{2}} & $if$ \ i \ $is odd$\\
\end{cases},\\
N_{2i-1}^{\min}
&= 
\begin{cases}
N_{2i-1}^{-\frac{i}{2}-1} = R_{2i-1}^{-2} N_{2(i-2)}^{-\frac{i}{2}+1} + {R'}_{2i-1}^{-1} N_{2(i-1)-1}^{-\frac{i}{2}} & $if$ \ i \ $is even$\\
N_{2i-1}^{-\frac{i+1}{2}} = {R'}_{2i-1}^{0} N_{2(i-1)-1}^{-\frac{i+1}{2}} & $if$ \ i \ $is odd$\\
\end{cases},\\
N_{2i}^{\min} 
&= 
\begin{cases}
N_{2i}^{-\frac{i}{2}} = R_{2i}^{-1} N_{2(i-2)}^{-\frac{i}{2}+1} + {R'}_{2i}^{0} N_{2(i-1)-1}^{-\frac{i}{2}} & $if$ \ i \ $is even$\\
N_{2i}^{-\frac{i+1}{2}+1} = {R'}_{2i}^{1} N_{2(i-1)-1}^{-\frac{i+1}{2}} & $if$ \ i \ $is odd$\\
\end{cases}.
 \end{align*}
Since we have
\begin{align*}
|N_{2k}| =& \left|  \sum_{j=-\frac{k+1}{2}+1}^{\frac{k+1}{2}+1} t^j N_{2k}^j  \right|\\
=& t^{2(-\frac{k+1}{2}+1)} \Bigl|N_{2k}^{-\frac{k+1}{2}+1} \Bigr| + \cdots + t^{2(\frac{k+1}{2}+1)} \Bigl|N_{2k}^{\frac{k+1}{2}+1} \Bigr|\\
=& t^{-k+1} |N_{2k}^{\min}| + \cdots + t^{k+3} |N_{2k}^{\max}| ,
\end{align*}
we obtain 
\begin{align*}
\displaystyle \Delta_{K,\rho}(t) =& \frac{|N_{2k}|}{t^2 - \mathrm{tr} \rho(b) t +1}\\
=& \frac{t^{-k+1} |N_{2k}^{\min} | + \cdots + t^{k+3} |N_{2k}^{\max}|}{t^2 - \mathrm{tr} \rho(b) t +1}\\
\sim & t^{0} |N_{2k}^{\min} | + \cdots + t^{2k} |N_{2k}^{\max}|.
\end{align*}

Now we compute $|N_{2k}^{\max}|$ and $|N_{2k}^{\min}|$.
If $i$ is odd, since
\begin{align*}
N_{2i-1}^{\max} &= R_{2i-1}^{0} N_{2(i-2)}^{\max} + {R'}_{2i-1}^{1} N_{2(i-1)-1}^{\max}\\
 &= - \frac{|m_i|}{m_i} \sum_{j}^{} (X_{2i-3}^{-1} X_{2i-4}^{-1})^j  N_{2(i-2)}^{\max} - \frac{|m_i|}{m_i} \sum_{j}^{} (X_{2i-3}^{-1} X_{2i-4}^{-1})^j X_{2i-4}N_{2(i-1)-1}^{\max}\\
&= - \frac{|m_i|}{m_i} \sum_{j}^{} (X_{2i-3}^{-1} X_{2i-4}^{-1})^j  \Bigl\{N_{2(i-2)}^{\max} + X_{2i-4} N_{2(i-1)-1}^{\max} \Bigr\},\\
  N_{2i}^{\max} &= R_{2i}^{1} N_{2(i-2)}^{\max} + {R'}_{2i}^{2} N_{2(i-1)-1}^{\max}\\
&=  \frac{|m_i|}{m_i} X_{2i}\sum_{j}^{} (X_{2i-3}^{-1} X_{2i-4}^{-1})^j N_{2(i-2)}^{\max} + \frac{|m_i|}{m_i}X_{2i} \sum_{j}^{} (X_{2i-3}^{-1} X_{2i-4}^{-1})^j X_{2i-4} N_{2(i-1)-1}^{\max}\\
 &=  \frac{|m_i|}{m_i} X_{2i} \sum_{j}^{} (X_{2i-3}^{-1} X_{2i-4}^{-1})^j \Bigl\{N_{2(i-2)}^{\max} + X_{2i-4} N_{2(i-1)-1}^{\max} \Bigr\},
\end{align*}
we have
\[
N_{2i}^{\max} = -X_{2i} N_{2i-1}^{\max}.
\]
By using this relation, we get
\begin{align*}
 N_{2i-1}^{\max} =& - \frac{|m_i|}{m_i} \sum_{j}^{} (X_{2i-3}^{-1} X_{2i-4}^{-1})^j  \Bigl\{N_{2(i-2)}^{\max} + X_{2i-4} N_{2(i-1)-1}^{\max} \Bigr\}\\
 =& - \frac{|m_i|}{m_i} \sum_{j}^{} (X_{2i-3}^{-1} X_{2i-4}^{-1})^j  \Bigl\{ (-X_{2(i-2)} N_{2(i-2)-1}^{\max}) + X_{2i-4} ( {R'}_{2(i-1)-1}^{0} N_{2((i-1)-1)-1}^{\max}) \Bigr\}\\
=& - \frac{|m_i|}{m_i} \sum_{j}^{} (X_{2i-3}^{-1} X_{2i-4}^{-1})^j X_{2i-4} \Bigl\{ -E+ {R'}_{2(i-1)-1}^{0}  \Bigr\} N_{2(i-2)-1}^{\max}\\ 
=& - \frac{|m_i|}{m_i} \sum_{j}^{} (X_{2i-3}^{-1} X_{2i-4}^{-1})^j X_{2i-4} \\
&\Bigl\{ -E+ \frac{|m_{i-1}|}{m_{i-1}} X_{2i-5}^{-1}\Bigl(\sum_{j}^{} (X_{2i-5} X_{2i-6})^j + \frac{|m_{i-1}|}{m_{i-1}}  (X_{2i-5} X_{2i-6})^{m_{i-1}+1}\Bigr) X_{2i-6}^{-1}  \Bigr\} N_{2(i-2)-1}^{\max}\\ 
=& - \frac{|m_i|}{m_i} \sum_{j}^{} (X_{2i-3}^{-1} X_{2i-4}^{-1})^j X_{2i-4} \\
&X_{2i-5}^{-1} \Bigl\{ -X_{2i-5}X_{2i-6}+ \frac{|m_{i-1}|}{m_{i-1}} \sum_{j}^{} (X_{2i-5} X_{2i-6})^j + (X_{2i-5} X_{2i-6})^{m_{i-1}+1} \Bigr\} X_{2i-6}^{-1} N_{2(i-2)-1}^{\max}\\ 
=& - \frac{|m_i|}{m_i} \sum_{j}^{} (X_{2i-3}^{-1} X_{2i-4}^{-1})^j X_{2i-4} \\
&X_{2i-5}^{-1} \Bigl\{ -X_{2i-5}X_{2i-6}+ \frac{|m_{i-1}|}{m_{i-1}} \sum_{j}^{} (X_{2i-5} X_{2i-6})^j + (X_{2i-5} X_{2i-6})^{m_{i-1}+1} \Bigr\} X_{2i-6}^{-1} N_{2(i-2)-1}^{\max}\\ 
=& 
\begin{cases}
\displaystyle - \frac{|m_i|}{m_i} \sum_{j}^{} (X_{2i-3}^{-1} X_{2i-4}^{-1})^j X_{2i-4} X_{2i-5}^{-1}\Bigl\{ \sum_{j=2}^{m_{i-1}+1} (X_{2i-5} X_{2i-6})^j \Bigr\} X_{2i-6}^{-1} N_{2(i-2)-1}^{\max} & $if$ \  m_{i-1}>0\\\\ 
\displaystyle - \frac{|m_i|}{m_i} \sum_{j}^{} (X_{2i-3}^{-1} X_{2i-4}^{-1})^j X_{2i-4}
X_{2i-5}^{-1}\Bigl\{ - \sum_{j = m_{i-1}+2}^{1} (X_{2i-5} X_{2i-6})^j \Bigr\} X_{2i-6}^{-1} N_{2(i-2)-1}^{\max} & $if$ \  m_{i-1}<0\\
\end{cases}\\
&= - \frac{|m_i m_{i-1}|}{m_i m_{i-1}} \sum_{j}^{} (X_{2i-3}^{-1} X_{2i-4}^{-1})^j X_{2i-4} \sum_{j}^{} (X_{2i-5} X_{2i-6})^j  N_{2(i-2)-1}^{\max}
 \end{align*}
 for $i$ is odd.

Since $k$ is odd, 
\begin{align*}
N_{2k}^{\max} =& -X_{2k} N_{2k-1}^{\max}\\
=& - X_{2k}\\
&
\Bigl\{- \frac{|m_k m_{k-1}|}{m_k m_{k-1}} \sum_{j}^{} (X_{2k-3}^{-1} X_{2k-4}^{-1})^j X_{2k-4} \sum_{j}^{} (X_{2k-5} X_{2k-6})^j \Bigr\}\\
&\ \ \ \ \ \ \ \ \ \ \ \ \ \ \  \vdots \\
&\Bigl\{- \frac{|m_3 m_{2}|}{m_3 m_{2}} \sum_{j}^{} (X_{3}^{-1} X_{2}^{-1})^j X_{2} \sum_{j}^{} (X_{1} X_{0})^j \Bigr\}\\ &N_1^{\max}\\
=& - X_{2k}\\
&\Bigl\{- \frac{|m_k m_{k-1}|}{m_k m_{k-1}} \sum_{j}^{} (X_{2k-3}^{-1} X_{2k-4}^{-1})^j X_{2k-4} \sum_{j}^{} (X_{2k-5} X_{2k-6})^j \Bigr\}\\
&\ \ \ \ \ \ \ \ \ \ \ \ \ \ \  \vdots \\
&\Bigl\{- \frac{|m_3 m_{2}|}{m_3 m_{2}} \sum_{j}^{} (X_{3}^{-1} X_{2}^{-1})^j X_{2} \sum_{j}^{} (X_{1} X_{0})^j \Bigr\} \\
&\Bigl\{- \frac{|m_1|}{m_1} \sum_{j}^{} (X_{-1}^{-1} A)^j A^{-1} + \Bigl(- \frac{|m_1|}{m_1} \sum_{j}^{} (X_{-1}^{-1} A)^j A^{-1} \Bigr) \Bigl(- \frac{|m_0|}{m_0} \bigl(\sum_{j}^{} (A B)^j (A B)^{-1}+ (AB)^{m_0} \bigr) \Bigr) \Bigr\}
\end{align*}
\begin{align*}
=& (-1)^{\frac{k+1}{2}} X_{2k} \frac{|m_k m_{k-1}\cdots m_0|}{m_k m_{k-1} \cdots m_0}\\
&\Bigl\{  \sum_{j}^{} (X_{2k-3}^{-1} X_{2k-4}^{-1})^j X_{2k-4} \sum_{j}^{} (X_{2k-5} X_{2k-6})^j \Bigr\} \cdots 
\Bigl\{ \sum_{j}^{} (X_{3}^{-1} X_{2}^{-1})^j X_{2} \sum_{j}^{} (X_{1} X_{0})^j \Bigr\} \\
&\Bigl\{  \sum_{j}^{} (X_{-1}^{-1} A)^j A^{-1} \sum_{j}^{} (A B)^j \Bigr\},
\end{align*}
where $j$ are given by
\begin{align*}
j=
\begin{cases}
 1, \cdots , m_i & $if$ \  m_{i-1}>0\\\\ 
 m_0+1 , \cdots, 0 & $if$ \  m_{i-1}<0\\
\end{cases},
 \end{align*}
 for each $i$.
Note that $X_{-4}=B, \ X_{-3}=A, \ X_{-2}=A^{-1}$ and $X_i \in SL_2(\bC)$ for all $i$. 
Then we obtain
\begin{align*}
|N_{2k}^{\max}|=
&\left|  \sum_{j}^{} (X_{2k-3}^{-1} X_{2k-4}^{-1})^j \right| \left| \sum_{j}^{} (X_{2k-5} X_{2k-6})^j \right| \cdots 
\left| \sum_{j}^{} (X_{3}^{-1} X_{2}^{-1})^j \right| \left| \sum_{j}^{} (X_{1} X_{0})^j \right| \\
&\left| \sum_{j}^{} (X_{-1}^{-1} A)^j \right| \left| \sum_{j}^{} (A B)^j \right|\\
=
&\left|  \sum_{j=1}^{|m_{k}|} (X_{2k-3}^{-1} X_{2k-4}^{-1})^j \right| \left| \sum_{j=1}^{|m_{k-1}|} (X_{2k-5} X_{2k-6})^j \right| \cdots 
\left| \sum_{j=1}^{|m_{3}|} (X_{3}^{-1} X_{2}^{-1})^j \right| \left| \sum_{j=1}^{|m_{2}|} (X_{1} X_{0})^j \right| \\
&\left| \sum_{j=1}^{|m_{1}|} (X_{-1}^{-1} A)^j \right| \left| \sum_{j=1}^{|m_{0}|} (A B)^j \right|\\
=&\prod^k_{i=0} \left| \sum^{|m_i|}_{j=1} (X_{2i-3}^{(-1)^i} X_{2i-4}^{(-1)^i})^j \right|.
\end{align*}

Similary, we have 
 \begin{align*}
 N_{2i-1}^{\min} &=   
- \frac{|m_i|}{m_i} X_{2i-1} \Bigl( \sum_{j} (X_{2i-3} X_{2i-4})^j \Bigr) X_{2i-4}^{-1} (N_{2i-4}^{\min}+X_{2i-3}^{-1}N_{2i-3}^{\min}), \\
 N_{2i}^{\min} &= -X_{2i-1}^{-1} N_{2i-1}^{\min} ,
\end{align*}
if $i$ is even, and
\begin{align*}
 N_{2i-1}^{\min} &=  
\begin{cases}
\displaystyle \frac{|m_i|}{m_i} X_{2i-1} \Bigl( \sum_{j} (X_{2i-3}^{-1} X_{2i-4}^{-1})^j \Bigr) X_{2i-4} N_{2i-3}^m & $if$ \ m_i \neq -1\\
O & $if$ \ m_i =-1\\
\end{cases}, \\
 N_{2i}^{\min} &= -\frac{|m_i|}{m_i} \Bigl( \sum_{j} (X_{2i-3}^{-1} X_{2i-4}^{-1})^j \Bigr) X_{2i-4} N_{2i-3}^{\min} ,
\end{align*}
if $i$ is odd.
By the above relations, if $i$ is even, we have
 \begin{align*}
N_{2i-1}^{\min} = -\frac{|m_i m_{i-1}|}{m_i m_{i-1}} X_{2i-1} \Bigl( \sum_{j} (X_{2i-3} X_{2i-4})^j \Bigr)  X_{2i-4}^{-1} \Bigl( \sum_{j} (X_{2i-5}^{-1} X_{2i-6}^{-1})^j \Bigr) X_{2i-5}^{-1} N_{2i-5}^{\min}.
 \end{align*}
Hence, we obtain
 \begin{align*}
N_{2k}^{\min} =&\ (-1)^{\frac{k+1}{2}}\frac{|m_k \cdots m_0|}{m_k \cdots m_0} 
\Bigl(\sum_{j} (X_{2k-3}^{-1} X_{2k-4}^{-1})^j \Bigr) \\
& \biggl\{ \Bigl( \sum_{j} (X_{2k-5} X_{2k-6})^j \Bigr) X_{2k-6}^{-1} \Bigl( \sum_{j} (X_{2k-7}^{-1} X_{2k-8}^{-1})^j \Bigr) \biggr\}  \\
&\ \ \ \ \ \ \ \ \ \ \ \ \ \ \  \vdots \\
& \biggl\{ \Bigl( \sum_{j} (X_{1} X_{0})^j \Bigr) X_{0}^{-1} \Bigl( \sum_{j} (X_{-1}^{-1} A)^j \Bigr)\biggr\}\\
&
\Bigl( \sum_{j} (A B)^j \Bigr),
 \end{align*}
and 
 \begin{align*}
| N_{2k}^{\min} |
=\prod^k_{i=0} \left| \sum^{|m_i|}_{j=1} (X_{2i-3}^{(-1)^i} X_{2i-4}^{(-1)^i})^j \right|.
\end{align*}
This completes the proof of Theorem \ref{2-bridge knot}.
 \qed

 \vspace{5mm}
 
 We can get the leading coefficient of the Alexander polynomial of $K$ from above discussion.

\begin{rem}
If we denote the Alexander polynomial of $K$ by $\Delta_K(t)$, then we have
\begin{align*}
\Delta_{K} (t)=& |N_{2k}|
=\left|  \sum_{j=-\frac{k+1}{2}+1}^{\frac{k+1}{2}+1} t^j N_{2k}^j  \right|\\
=& t^{-\frac{k+1}{2}+1} N_{2k}^{-\frac{k+1}{2}+1}  + \cdots + t^{\frac{k+1}{2}+1} N_{2k}^{\frac{k+1}{2}+1} .
\end{align*}
In this case, each $N_{2k}^j$ are $1 \times 1$ matrices and all $X_i$ are the identity matrix $1$.
Same as in the case of the twisted Alexander polynomial, we can compute $N_{2k}^{\max}$ and $N_{2k}^{\min}$, that is, we get
\begin{align*}
N_{2k}^{\max}=&
 (-1)^{\frac{k+1}{2}} X_{2k} \frac{|m_k m_{k-1}\cdots m_0|}{m_k m_{k-1} \cdots m_0}\\
&\Bigl\{  \sum_{j}^{} (X_{2k-3}^{-1} X_{2k-4}^{-1})^j X_{2k-4} \sum_{j}^{} (X_{2k-5} X_{2k-6})^j \Bigr\} \cdots 
\Bigl\{ \sum_{j}^{} (X_{3}^{-1} X_{2}^{-1})^j X_{2} \sum_{j}^{} (X_{1} X_{0})^j \Bigr\} \\
&\Bigl\{  \sum_{j}^{} (X_{-1}^{-1} A)^j A^{-1} \sum_{j}^{} (A B)^j \Bigr\}\\
=&  (-1)^{\frac{k+1}{2}} \frac{|m_k m_{k-1}\cdots m_0|}{m_k m_{k-1} \cdots m_0} |m_k| |m_{k-1}|\cdots |m_0|\\
=&  (-1)^{\frac{k+1}{2}} m_k m_{k-1}\cdots m_0
\end{align*}
and
\begin{align*}
N_{2k}^{\min}=&
 (-1)^{\frac{k+1}{2}}\frac{|m_k \cdots m_0|}{m_k \cdots m_0} 
\Bigl(\sum_{j} (X_{2k-3}^{-1} X_{2k-4}^{-1})^j \Bigr) \\
& \biggl\{ \Bigl( \sum_{j} (X_{2k-5} X_{2k-6})^j \Bigr) X_{2k-6}^{-1} \Bigl( \sum_{j} (X_{2k-7}^{-1} X_{2k-8}^{-1})^j \Bigr) \biggr\}  \\
&\ \ \ \ \ \ \ \ \ \ \ \ \ \ \  \vdots \\
& \biggl\{ \Bigl( \sum_{j} (X_{1} X_{0})^j \Bigr) X_{0}^{-1} \Bigl( \sum_{j} (X_{-1}^{-1} A)^j \Bigr)\biggr\}\\
&
\Bigl( \sum_{j} (A B)^j \Bigr)\\
=&  (-1)^{\frac{k+1}{2}} \frac{|m_k m_{k-1}\cdots m_0|}{m_k m_{k-1} \cdots m_0} |m_k| |m_{k-1}|\cdots |m_0|\\
=&  (-1)^{\frac{k+1}{2}} m_k m_{k-1}\cdots m_0.
 \end{align*}
Hence we can write
\begin{align*}
\Delta_{K} (t)= \kappa_0 t^0+ \cdots + \kappa_0 t^{k+1},
\end{align*}
where
\begin{eqnarray*}
\kappa_0 =%(-1)^{\frac{k+1}{2}} 
|m_k \cdots m_0|.
\end{eqnarray*}
Thus, since the genus of $K$ is given by $\frac{k+1}{2}$, we have
\[
\deg(\Delta_{K}(t)) = 2 g(K).
\]
These results coincide with the results of \cite{BZ}.
Furthermore, we have
\[
\deg(\Delta_{K,\rho}(t)) = 4 g(K) -2,
\]
if the representation $\rho: G(K) \to SL_2(\bC)$ satisfies
\[
\left| \sum^{|m_i|}_{j=1} (X_{2i-3}^{(-1)^i} X_{2i-4}^{(-1)^i})^j \right| \neq 0,
\]
for $i=0,1, \ldots, k$.
This condition means that any eigenvalue $\lambda$ of $2 \times 2$ matrices $X_{2i-3}^{(-1)^i} X_{2i-4}^{(-1)^i}$ satisfies
\[
\lambda^{m_i} \neq  1.
\]
\end{rem}

\section{The case (2)}
Throughout this section, we suppose $K=M(b;(\alpha_1,\beta_1), (\alpha_2, \beta_2),  (\alpha_3,\beta_3))$ which satisfies condition (2).
In this section, we give the presentation of knot groups of $K$ and compute their twisted Alexander polynomials associated to their $SL_2(\bC)$ representations.

To this end, we have the following results.

\begin{lem}\label{the continued fraction expansions}
For integers $\alpha$ and $\beta$, we have the continued fraction expansions
\begin{align*}
\frac{\beta}{\alpha}=&
\begin{cases}
\displaystyle 2 m_0 + \frac{1}{\displaystyle 2 m_1 + \frac{}{ \ddots +\displaystyle \frac{1}{2 m_{2l}}}} & $if$ \ \alpha \ $is odd and$ \ \beta \ $is even$,\\
\displaystyle 2 m_0 + \frac{1}{\displaystyle 2 m_1 + \frac{}{ \ddots +\displaystyle \frac{1}{2 m_{2l+1}}}} & $if$ \ \alpha \ $is even and$ \ \beta \ $is odd$,\\
\displaystyle 2 m_0 + \frac{1}{\displaystyle 2 m_1 + \frac{}{ \ddots +\displaystyle \frac{1}{2 m_{l}+1}}} & $if$ \ \alpha \ $and$ \ \beta \ $is odd$,
\end{cases}
\end{align*}
\begin{comment}
\begin{align*}
 2a + \frac{1}{\displaystyle \frac{2b+1}{2c}} =& \frac{2a(2b+1)+2c}{2b+1} = \frac{2(2ab+a+c)}{2b+1}, \\
 2a + \frac{1}{\displaystyle \frac{2b}{2c+1}} =& \frac{2a(2b)+2c+1}{2b} = \frac{2(2ab+c)+1}{2b}, \\
 2a + \frac{1}{\displaystyle \frac{2b+1}{2c+1}} =& \frac{2a(2b+1)+2c+1}{2b+1} = \frac{2(2ab+a+c)+1}{2b+1}, 
\end{align*}
\end{comment}
for some non-zero $m_i \in \bZ$.
\end{lem}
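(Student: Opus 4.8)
The plan is to prove the lemma by induction on $|\alpha|$, running the Euclidean algorithm while forcing every partial quotient to be even. The one recursive step is the elementary identity
\[
\frac{\beta}{\alpha} = 2m_0 + \frac{\beta - 2m_0\alpha}{\alpha} = 2m_0 + \frac{1}{\,\alpha/\beta_1\,}, \qquad \beta_1 := \beta - 2m_0\alpha,
\]
so that after removing the even leading term $2m_0$ we are left to expand the smaller fraction $\alpha/\beta_1$. I may assume $\gcd(\alpha,\beta)=1$, as in the standing hypotheses on the tangle slopes. The device that drives the whole argument is a \emph{parity swap}: because $2m_0\alpha$ is even we have $\beta_1 \equiv \beta \pmod 2$, while the numerator of the new fraction is the old denominator $\alpha$; hence a single step interchanges the parities of numerator and denominator. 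This is exactly the content of the three identities
\begin{align*}
2a + \frac{1}{(2b+1)/(2c)} &= \frac{2(2ab+a+c)}{2b+1}, \\
2a + \frac{1}{(2b)/(2c+1)} &= \frac{2(2ab+c)+1}{2b}, \\
2a + \frac{1}{(2b+1)/(2c+1)} &= \frac{2(2ab+a+c)+1}{2b+1},
\end{align*}
which I would record first, as they compute the parity of $\beta/\alpha$ from that of the tail.

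For the inductive step I would take $2m_0$ to be the unique even integer with $\beta_1 = \beta - 2m_0\alpha \in (-|\alpha|,\,|\alpha|)$; this exists because consecutive even multiples of $\alpha$ are $2|\alpha|$ apart. When $|\alpha|>1$, coprimality forces $\beta_1 \neq 0$ and $\beta_1 \neq \pm\alpha$ (otherwise $\alpha \mid \beta$), so $0<|\beta_1|<|\alpha|$ and $\gcd(\alpha,\beta_1)=\gcd(\alpha,\beta)=1$; thus the induction hypothesis applies to $\alpha/\beta_1$, whose denominator is strictly smaller in absolute value. The base case $|\alpha|=1$ is the integer $\beta/\alpha=\pm\beta$, which is a single even term $2m_0$ when $\beta$ is even and a single odd term $2m_0+1$ when $\beta$ is odd.

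It then remains to read the three shapes off the parity swap. Labelling the states by (parity of numerator, parity of denominator), the swap fixes the state $(\mathrm{odd},\mathrm{odd})$ and interchanges $(\mathrm{even},\mathrm{odd})$ with $(\mathrm{odd},\mathrm{even})$. An odd/odd fraction therefore stays odd/odd for the entire recursion and can only terminate at the base case, producing the trailing term $2m_l+1$ and hence the third form. An even/odd or odd/even fraction alternates between the two remaining states and must terminate from an even/odd configuration, since the base denominator $\pm1$ is odd; its last term is therefore even, and counting the alternations pins down the length, giving the even final index $2m_{2l}$ in the first case against the odd final index $2m_{2l+1}$ in the second.

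The point needing the most care is the requirement that the $m_i$ be nonzero. After the first step every fraction produced has absolute value strictly greater than $1$ (because $|\beta_1|<|\alpha|$), which forces the even partial quotient selected there to be nonzero; hence $m_1,\dots,m_k$, including the terminal term, are nonzero with no extra work. The only genuinely delicate entry is the leading $2m_0$, which vanishes precisely when $|\beta|<|\alpha|$ --- the typical situation for a reduced tangle slope; reconciling this with the stated nonvanishing (by the convention that a leading zero is suppressed, or by restricting to the slopes that actually occur) is the one bookkeeping issue I would need to settle, while the arithmetic itself is no more than the three displayed identities.
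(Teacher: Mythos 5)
Your proof is correct, and it supplies more than the paper itself does: in the source, Lemma~\ref{the continued fraction expansions} is stated with no visible proof at all --- the only trace of an argument is a commented-out block containing exactly the three parity identities you record at the outset. What you add is precisely the part the paper leaves implicit: the even-quotient Euclidean induction (taking the unique even $2m_0$ with $|\beta - 2m_0\alpha| < |\alpha|$, with coprimality and $|\alpha|>1$ ruling out the boundary and zero cases), the termination analysis via the parity-swap states $(\mathrm{odd},\mathrm{odd})$ versus $(\mathrm{even},\mathrm{odd})\leftrightarrow(\mathrm{odd},\mathrm{even})$, which correctly pins down the parity of the last index in each of the three cases, and the observation that every partial quotient after the first is automatically nonzero because each tail has absolute value greater than $1$. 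Your closing caveat is also well taken, and it is a defect of the lemma's statement rather than of your argument: the leading entry $m_0$ genuinely can vanish whenever $|\beta|<|\alpha|$, which is the normalized tangle situation, and the paper itself concedes this implicitly --- Proposition~\ref{the continued fraction expansions of case 2} writes $\beta_3/\alpha_3$ with no leading integer term at all, and Theorem~\ref{the case 2} splits into the cases $m_0=0$ and $m_0\neq 0$. So the phrase ``non-zero $m_i$'' in the lemma should be read as applying to $m_1,\ldots$ and the terminal entry only, which is exactly what your construction delivers.
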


\begin{prop}\label{the continued fraction expansions of case 2}
$K$ can be transformed to a Montesinos knot $M(0;(\alpha_1,\beta_1), (\alpha_2, \beta_2),  (\alpha_3,\beta_3))$ whose three rational tangles $\beta_1/\alpha_1$, $\beta_2/\alpha_2$ and $\beta_3/\alpha_3$ are written by
\begin{align*}
\frac{\beta_1}{\alpha_1}=& \pm \frac{1}{2},\\
\frac{\beta_2}{\alpha_2}=& 2 m_0 + \frac{1}{2 m_1+\displaystyle\frac{1}{2 m_2 +  \displaystyle\frac{1}{ \ddots +\displaystyle \frac{1}{2 m_k}}}},\\
\frac{\beta_3}{\alpha_3}=& \frac{1}{2 n_1+\displaystyle\frac{1}{2 n_2 +  \displaystyle\frac{1}{ \ddots +\displaystyle\frac{1}{2 n_l}}}},
\end{align*}
where both $k$ and $l$ are even and $\alpha_1, \alpha_2, \alpha_3$ are positive integers.  
\end{prop}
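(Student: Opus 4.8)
The plan is to bring the diagram into the stated normal form purely by the elementary Montesinos isotopy that transfers integer twists between a tangle box and the central twist region. For any integers $p_1,p_2,p_3$ this isotopy gives
\[
M\bigl(b;(\alpha_1,\beta_1),(\alpha_2,\beta_2),(\alpha_3,\beta_3)\bigr)\cong M\bigl(b+p_1+p_2+p_3;(\alpha_1,\beta_1+p_1\alpha_1),(\alpha_2,\beta_2+p_2\alpha_2),(\alpha_3,\beta_3+p_3\alpha_3)\bigr),
\]
since adding the integer $p_i$ to the slope $\beta_i/\alpha_i$ and adding $p_1+p_2+p_3$ to $b$ leaves both the knot type and the quantity $e(K)=b-\sum_i\beta_i/\alpha_i$ unchanged. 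The goal is to choose $p_1,p_2,p_3$ so that afterwards $b=0$ and each slope takes the prescribed shape; the existence of the all-even continued fraction expansions will then be read off from Lemma \ref{the continued fraction expansions}. Throughout I may assume $\alpha_1,\alpha_2,\alpha_3>0$, which is a routine sign normalization of the tangle slopes.

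First I would treat the first tangle. Since $\alpha_1=2$ and $\gcd(\alpha_1,\beta_1)=1$, the numerator $\beta_1$ is odd, say $\beta_1=2c+1$, so the slope is $c+\tfrac12$; taking $p_1=-c$ replaces it by $\tfrac12$ and $p_1=-c-1$ replaces it by $-\tfrac12$. Thus the first tangle becomes $\pm\tfrac12$, and the two admissible values of $p_1$ differ by $1$. For the remaining tangles I use that $\alpha_2,\alpha_3$ are odd: since $\beta_i+p_i\alpha_i\equiv\beta_i+p_i\pmod 2$, the parity of the new numerator is governed by the parity of $p_i$, so I can force both new numerators to be even. An even numerator over an odd denominator falls into the first case of Lemma \ref{the continued fraction expansions}, which yields an all-even expansion $[2m_0,2m_1,\dots,2m_{2l}]$ whose last index is even; this is exactly the assertion that the lengths $k$ and $l$ in the statement are even (and it is precisely to avoid the ``$+1$'' of the third case of the Lemma that I insist on even numerators). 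I would additionally pin down $p_3$ inside its parity class so that the leading term of the expansion of $\beta_3/\alpha_3+p_3$ is $0$: stepping $p_3$ by $2$ shifts this leading even term by $2$, so a unique choice makes it vanish, giving the third slope the purely fractional shape $\tfrac{1}{2n_1+\cdots}$.

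The one genuine difficulty is to achieve $b=0$ \emph{simultaneously} with all three parity conditions, and this is where the bookkeeping must be done carefully. Imposing $b+p_1+p_2+p_3=0$ forces $p_2=-b-p_1-p_3$, so once the sign of the first tangle and the normalization of $p_3$ are chosen there is no freedom left in $p_2$; yet I still need $\beta_2+p_2\alpha_2$ to be even so that the second tangle also lands in the first case of the Lemma and acquires an expansion $[2m_0,\dots,2m_k]$ with $k$ even. The resolution is the sign ambiguity recorded above: switching the first tangle between $+\tfrac12$ and $-\tfrac12$ changes $p_1$ by $1$, hence changes $p_2=-b-p_1-p_3$ by $1$ and flips the parity of $\beta_2+p_2\alpha_2$. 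Consequently exactly one of the two sign choices makes the second numerator even, and with that choice all three slopes attain their prescribed forms while $b=0$. I expect the main work of the write-up to be this parity coordination together with a careful statement of the twist-transfer isotopy; everything else is a direct application of Lemma \ref{the continued fraction expansions}.
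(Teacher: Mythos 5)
Your proposal is correct and follows essentially the same route as the paper: both arguments rest on the integer twist-transfer moves among the tangles and the central twist region (preserving $e(K)$) together with Lemma \ref{the continued fraction expansions}, and your key observation --- that switching the first tangle between $+\frac{1}{2}$ and $-\frac{1}{2}$ flips the parity of the forced $p_2$ and hence of the second numerator --- is exactly the move the paper uses in its case (ii). The difference is only organizational: the paper carries out the reductions geometrically and splits into three parity cases (i)--(iii), with your ``leading term of the third slope equals $0$'' condition appearing there as the normalization $-\alpha_3<\beta_3<\alpha_3$, whereas you package everything as a single solvable system of parity constraints.
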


\begin{proof}
For a Montesinos knot $K=M(b;(\alpha_1,\beta_1), (\alpha_2, \beta_2),  (\alpha_3,\beta_3))$, 
we can put $b$ half twists into a tangle $\beta_1/\alpha_1$, i.e. we may suppose $b=0$.
Then, since $\alpha_1=2$, we have
\begin{align*}
\frac{\beta_1}{\alpha_1}=& \frac{2m+1}{2}
=m +\frac{1}{2},
\end{align*}
and we can move $m$ half twists to tangle $\beta_2/\alpha_2$.
Hence we put
\[
\frac{\beta_1'}{\alpha_1}:= \frac{\beta_1}{\alpha_1} -m = \frac{1}{2}.
\]
Similary, by putting some twists from the tangle $\beta_3/\alpha_3$ into the tangle $\beta_2/\alpha_2$ if necessary, 
 we suppose $-\alpha_3 < \beta_3 <\alpha_3.$

The numbers $\alpha$ and $\beta$ of the rational tangle $\beta/\alpha$ corresponds to the number of lines of the tangle, i.e. $\alpha$ means the number of horizontal lines and $\beta$ means the number of the vertical lines of the tangle.
Since we have $\alpha_2 \equiv \alpha_3 \equiv 1 \mod 2$, we let both $\alpha_2$ and $\alpha_3$ be positive odd numbers.
Then, we have the following three cases; 
\begin{itemize}
\item[(i)] both $\beta_2$ and $\beta_3$ are even,
\item[(ii)] either $\beta_2$ or $\beta_3$ is odd,
\item[(iii)] both $\beta_2$ and $\beta_3$ are odd.
\end{itemize}
From Lemma \ref{the continued fraction expansions}, the statement is true for the case (i).
Hence, we consider the case (ii) and (iii).

 \begin{figure}[h]
  \begin{center}
\includegraphics[clip,width=7cm]{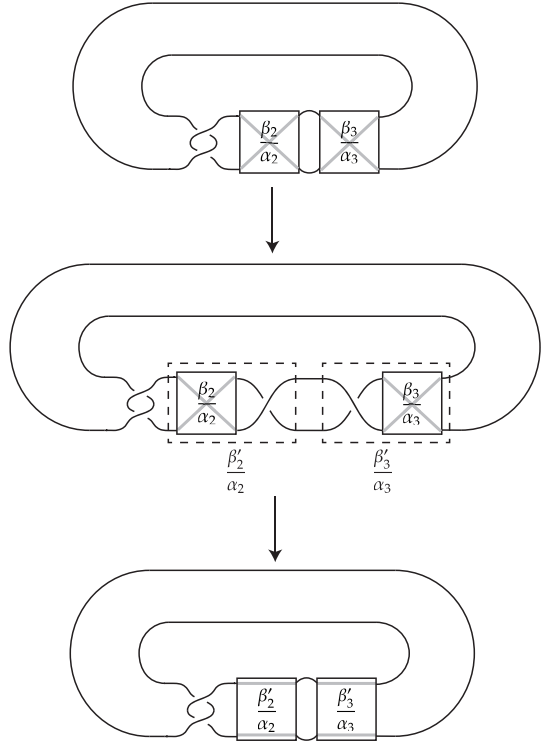}
\caption{The case of both $\beta_2$ and $\beta_3$ are odd}
  \end{center}
\end{figure}
In the case (iii), we add two half twists and denote boxes of dotted line depicted in Figure 4 by $\frac{\beta_2'}{\alpha_2}$ and $\frac{\beta_3'}{\alpha_3}$.
Then we have
\begin{align*}
\frac{\beta_2'}{\alpha_2}=& \frac{\beta_2}{\alpha_2} + \frac{|\beta_3|}{ \beta_3},\\
\frac{\beta_3'}{\alpha_3}=& \frac{\beta_3}{\alpha_3} - \frac{|\beta_3|}{ \beta_3},
\end{align*}
and hence both $\beta_2'$ and $\beta_3'$ are even.%, $\beta_2'/\alpha_2$ and $\beta_3'/\alpha_3$ are written by the formula of the statement.

\begin{figure}[h]
  \begin{center}
\includegraphics[clip,width=7cm]{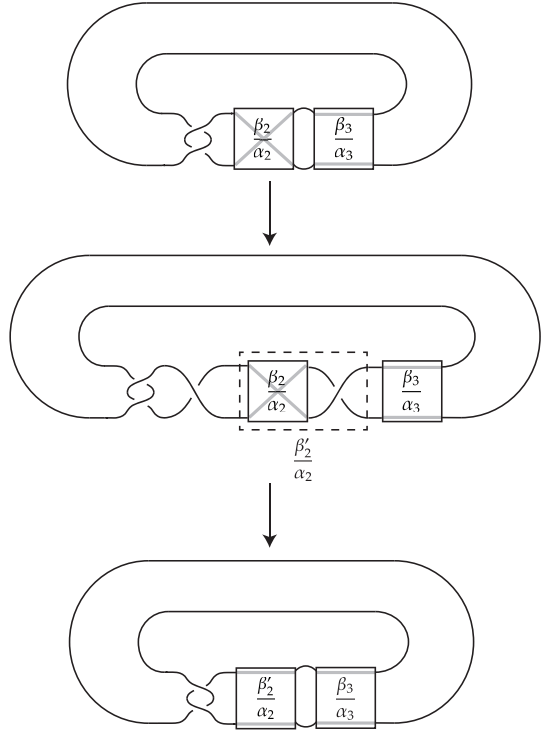}
\caption{The case of either $\beta_2$ or $\beta_3$ is odd}
  \end{center}
\end{figure}
In the case (ii), 
if $\beta_2$ is odd and $\beta_3$ is even, we add two half twists and denote boxes of dotted line depicted in Figure 5 by $\frac{\beta_2'}{\alpha_2}$.
Then we obtain new first tangle
\begin{align*}
\frac{\beta_1''}{\alpha_1}= -\frac{1}{2},
\end{align*}
and $\beta_2'$ is even
since
\begin{align*}
\frac{\beta_2'}{\alpha_2}=& \frac{\beta_2}{\alpha_2} + 1.
\end{align*}
Similary, if $\beta_2$ is even and $\beta_3$ is odd, we can obtain 
\begin{align*}
\frac{\beta_1''}{\alpha_1}= -\frac{1}{2},
\end{align*} 
and even integer $\beta_3'$.
\end{proof}

\subsection{Main theorem}

We consider the knot depicted in Figure 6. Note that two rational tangles are written by
\begin{align*}
\frac{\beta_2}{\alpha_2}=&\ 2 m_0 + \frac{1}{2 m_1+\displaystyle\frac{1}{2 m_2 +  \displaystyle\frac{}{ \ddots +\displaystyle\frac{1}{2 m_k}}}},\\
\frac{\beta_3}{\alpha_3}=&\ \frac{1}{2 n_1+\displaystyle\frac{1}{2 n_2 +  \displaystyle\frac{}{ \ddots +\displaystyle\frac{1}{2 n_l}}}},
\end{align*}
where both $k$ and $l$ are even. 
Then, we put
\begin{align*}
x_{-4} =& c^{-1} a^{-1} c, \ x_{-3} = a, \ x_{-2} = b,\\
y_{-2} =& b, \ y_{-1} = c, \ y_{0} = c^{-1} a c^{-1} a^{-1} c.
\end{align*}
As in the case (1), by extending these, we can take generators $\{x_{-4}, \cdots, x_{2k}, y_{-2}, \cdots, y_{2l}\}$ of the knot group $G(K)$ such that the following relations hold:
\begin{align*}
r_{2i-1}: \hspace{2mm} x_{2i-1} &= (x_{2i-3}^{(-1)^i} x_{2i-4}^{(-1)^i})^{m_i} x_{2i-3} (x_{2i-3}^{(-1)^i} x_{2i-4}^{(-1)^i})^{-m_i},\\
r_{2i}: \hspace{5.55mm} x_{2i} &= (x_{2i-3}^{(-1)^i} x_{2i-4}^{(-1)^i})^{m_i} x_{2i-4} (x_{2i-3}^{(-1)^i} x_{2i-4}^{(-1)^i})^{-m_i},\\
s_{2i-1}: \hspace{2.2mm} y_{2i-1} &= (y_{2i-3}^{(-1)^i} y_{2i-4}^{(-1)^i})^{-n_i} y_{2i-3} (y_{2i-3}^{(-1)^i} y_{2i-4}^{(-1)^i})^{n_i},\\
s_{2i}: \hspace{5.75mm} y_{2i} &= (y_{2i-3}^{(-1)^i} y_{2i-4}^{(-1)^i})^{-n_i} y_{2i-4} (y_{2i-3}^{(-1)^i} y_{2i-4}^{(-1)^i})^{n_i},
\end{align*}
and
\begin{align*} 
r_{2k+1}: \hspace{2mm} x_{2k-1} =&\  x_{2k-2}^{-1},\\
s_{2l+1}: \hspace{2.92mm} y_{2l-1} =&\ y_{2l-2}^{-1}, \\
rs: \hspace{5.61mm} x_{2k} =&\ y_{2l}.
\end{align*}

\begin{figure}[h]
  \begin{center}
\includegraphics[clip,width=6cm]{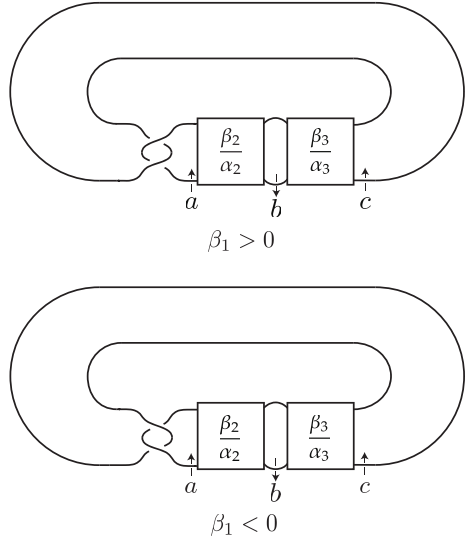}
\caption{The knot of case (2)}
  \end{center}
\end{figure}

Let $\rho : G(K) \to SL_2(\bC)$ be a representation of the knot group $G(K)$ and put
\[
\rho(a):=A, \ \rho(b):=B, \ \rho(c):=C, \ \rho(x_i):=X_i, \ \rho(y_i) :=Y_i.
\]
Then we have the following theorem.

\begin{thm}\label{the case 2}
We have
\begin{eqnarray*}
\Delta_{K,\rho}(t)
= 
  \begin{cases}
  \kappa_0 t^0 + \cdots  + \kappa_0 t^{2(k+l+1)} & $if$ \ m_0 \neq 0,\\
  \lambda_0 t^0 + \cdots + \lambda_0 t^{2(k+l)-2} & $if$ \ m_0 = 0,\\
  \end{cases}
\end{eqnarray*}
where $\kappa_0$ and $\lambda_0$ are given by the following:

 \begin{align*}
\kappa_0 =&\ \prod^k_{i=0} \left| \sum^{|m_i|}_{j=1} (X_{2i-3}^{(-1)^i} X_{2i-4}^{(-1)^i})^j \right|
\prod^l_{i=1} \left| \sum^{|n_i|}_{j=1} (Y_{2i-3}^{(-1)^i} Y_{2i-4}^{(-1)^i})^j \right|,\\
\lambda_0 =&\  \prod^k_{i=2} \left| \sum^{|m_i|}_{j=1} (X_{2i-3}^{(-1)^i} X_{2i-4}^{(-1)^i})^j \right|
\prod^l_{i=2} \left| \sum^{|n_i|}_{j=1} (Y_{2i-3}^{(-1)^i} Y_{2i-4}^{(-1)^i})^j \right| 
\lambda,
 \end{align*}
 where
\begin{align*}
\lambda=
 \begin{cases}
 \vspace{2mm}
\biggl| \frac{|m_1|}{m_1} \Bigl(\sum_j (B A)^j\Bigr) + \frac{|n_1|}{n_1}A^{-1} C \Bigl(\sum_j (B C)^j\Bigr) 
 + \frac{|m_1 n_1|}{m_1 n_1}\Bigl(\sum_j (B A)^j\Bigr) \Bigl(\sum_j (B C)^j\Bigr)BC \biggr| &  $if$ \ \beta_1 > 0,\\
\biggl| \frac{|m_1|}{m_1} \Bigl(\sum_j (B A)^j\Bigr) + \frac{|n_1|}{n_1}  A C^{-1} \Bigl(\sum_j (B C)^j\Bigr) 
 - \frac{|m_1 n_1|}{m_1 n_1}\Bigl(\sum_j (B A)^j\Bigr) \Bigl(\sum_j (B C)^j\Bigr) \biggr| &  $if$ \ \beta_1 < 0.
\end{cases}
\end{align*}
 \end{thm}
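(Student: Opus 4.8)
The plan is to adapt the chain-reduction argument of Theorem \ref{2-bridge knot} to the present situation, in which the knot group is assembled from \emph{two} tangle chains --- the $x$-chain governed by the relations $r_{2i-1}, r_{2i}$ and the parameters $m_i$, and the $y$-chain governed by $s_{2i-1}, s_{2i}$ and the $n_i$ --- glued along the junction relation $rs: x_{2k} = y_{2l}$ together with the initial identifications coming from the first tangle $\beta_1/\alpha_1 = \pm 1/2$. First I would write down the full twisted Alexander matrix of this presentation, set $\Phi := (\rho \otimes \frak{a}) \circ \phi$ as before, and delete the two columns attached to a chosen meridian, so that, exactly as in Proposition \ref{TAP of 2-bridge knot}, the denominator becomes $t^2 - \mathrm{tr}\,\rho(\cdot)\,t + 1$ and the twisted Alexander polynomial reduces to a single reduced determinant.

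Next I would run the same Fox-derivative row operations on each chain independently. Away from the junction the $r$-relations are formally identical to those of Section 2, so the $x$-chain yields matrices $N_{2i-1}, N_{2i}$ satisfying the recursion of Proposition \ref{TAP of 2-bridge knot} and the degree bounds of Proposition \ref{N_i}; the $s$-relations differ only by the sign of the exponent $-n_i$, so the $y$-chain yields an analogous family with $n_i$ in place of $m_i$. Collecting highest and lowest $t$-coefficients as in the computation of $N_{2k}^{\max}$ and $N_{2k}^{\min}$, and using that every connecting matrix $X_i, Y_i$ lies in $SL_2(\bC)$ and hence has determinant one, each chain contributes a telescoping product of the local sums $\sum_{j=1}^{|m_i|}(X_{2i-3}^{(-1)^i} X_{2i-4}^{(-1)^i})^j$ and $\sum_{j=1}^{|n_i|}(Y_{2i-3}^{(-1)^i} Y_{2i-4}^{(-1)^i})^j$. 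The reduced determinant then factors, through $x_{2k} = y_{2l}$, as the product of these two telescoping contributions, which after division by the denominator gives the leading coefficient $\kappa_0$ and the degree $2(k+l+1)$ in the generic case $m_0 \neq 0$.

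The decisive step is the junction analysis, and in particular the degenerate case $m_0 = 0$. When $m_0 \neq 0$ both recursions bottom out cleanly: the $i=0$ factor $|\sum_{j=1}^{|m_0|}(AB)^j|$ survives on the $x$-side, the $y$-product begins at $i=1$, and the product formula for $\kappa_0$ follows. When $m_0 = 0$ this bottom factor collapses and the $i=1$ local determinants of the two chains no longer separate; their contributions, together with the data of the first tangle encoded in the identifications $x_{-4} = c^{-1}a^{-1}c$, $x_{-3}=a$, $x_{-2}=b$, $y_{-2}=b$, $y_{-1}=c$, $y_0 = c^{-1}ac^{-1}a^{-1}c$, must be combined by hand at the base of the recursion, and this is exactly what produces the joint factor $\lambda$. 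Here the two sign choices $\beta_1 > 0$ and $\beta_1 < 0$ (i.e.\ $\beta_1/\alpha_1 = \pm 1/2$) lead to the two displayed expressions for $\lambda$, and tracking the cancellations that reduce the overall degree span by four to give $2(k+l)-2$ is the main obstacle.

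Finally I would take determinants of the junction matrices $N^{\max}$ and $N^{\min}$, use multiplicativity of $\det$ to pull out the two telescoping products and the factor $\lambda$, and divide by $t^2 - \mathrm{tr}\,\rho(\cdot)\,t+1$ to read off both the leading coefficient and the degree in each case, paralleling the end of the proof of Theorem \ref{2-bridge knot}.
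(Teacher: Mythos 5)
Your proposal follows essentially the same route as the paper's proof: delete the two columns of the generator $c$, reduce each tangle chain by the Section~2 recursions so that the polynomial becomes the $2\times 2$-block determinant
$\left|\begin{array}{cc} M_{2k+1} & M_{2k+1}' \\ N_{2l+1} & N_{2l+1}' \end{array}\right|$
divided by $t^2-\mathrm{tr}\,\rho(c)\,t+1$ (Proposition \ref{TAP of the case 2}), and then split into the cases $m_0\neq 0$ and $m_0=0$, with $\lambda$ produced exactly where you locate it, in the joint computation at the base of the two recursions. The one mechanism you leave implicit is that this dichotomy is decided by the $t$-degree spans of the four blocks rather than by the junction relation or by cancellation: when $m_0\neq 0$ the extreme coefficients of the block determinant can only be realized by the diagonal blocks, hence factor as $|M_{2k+1}^{\max}|\,|{N'}_{2l+1}^{\max}|$, while when $m_0=0$ the span of $M_{2k+1}$ shrinks by one on each side, the extreme coefficient becomes the genuinely mixed block determinant, which the paper evaluates as $|M|\,|N|\,\lambda$ by pulling out the diagonal block $\mathrm{diag}(M,N)$, and this shrinking of spans is also exactly what lowers the degree to $2(k+l)-2$.
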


\subsection{Proof of Theorem \ref{the case 2}}
We have
\begin{eqnarray*} 
G(K) 
= \langle a,b,c, x_{-4}, \cdots, x_{2k}, y_{-2}, \cdots, y_{2l}  \ | \  r_{-4}, \cdots, r_{2k+1}, s^{-2}, \cdots, s^{2l+1} \rangle.
\end{eqnarray*}
Suppose $\Phi=(\rho \otimes \frak{a}) \circ \phi$.
Then we put 
\begin{align*}
R_{-4}=&\ \Phi \left( \frac{\partial r_{-4}}{\partial a} \right) 
\end{align*}
and 
\begin{align*}
R_{2i-1}=&\ \Phi \left( \frac{\partial r_{2i-1}}{\partial x_{2i-4}} \right), \
R'_{2i-1}= \Phi \left( \frac{\partial r_{2i-1}}{\partial x_{2i-3}} \right), \\
R_{2i}=&\ \Phi \left( \frac{\partial r_{2i}}{\partial x_{2i-4}} \right), \ \ \ \ 
R'_{2i}= \Phi \left( \frac{\partial r_{2i}}{\partial x_{2i-3}} \right) ,
\end{align*}
for $0 \leq i \leq k$,
\begin{align*}
S_{2i-1}=&\ \Phi \left( \frac{\partial s_{2i-1}}{\partial y_{2i-4}} \right), \
S'_{2i-1}= \Phi \left( \frac{\partial s_{2i-1}}{\partial y_{2i-3}} \right), \\
S_{2i}=&\ \Phi \left( \frac{\partial s_{2i}}{\partial y_{2i-4}} \right), \ \ \ \ 
S'_{2i}= \Phi \left( \frac{\partial s_{2i}}{\partial y_{2i-3}} \right) ,
\end{align*}
for $1 \leq i \leq l$.

\begin{prop}\label{TAP of the case 2}
We have
\[
\displaystyle \Delta_{K,\rho}(t) =
 \frac{\left|
\begin{array}{cc}
M_{2k+1} & M_{2k+1}'\\
N_{2l+1} & N_{2l+1}'
\end{array}
\right|}{t^2 - \mathrm{tr}  \rho(c) t +1},
\]
where the matrices $M_{2k+1}, M_{2k+1}', N_{2l+1} , N_{2l+1}'$ are computed as in the case {\rm (1)}.
\end{prop}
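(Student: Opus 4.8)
The plan is to mimic the determinant-reduction argument of Proposition~\ref{TAP of 2-bridge knot} from the case (1), but now for a knot group with two chains of relations (the $x$-chain coming from $\beta_2/\alpha_2$ and the $y$-chain coming from $\beta_3/\alpha_3$) that are coupled only at the very end through the single relation $rs: x_{2k}=y_{2l}$. First I would write down the full twisted Alexander matrix $A_\rho$ for the presentation
\[
G(K) = \langle a,b,c,x_{-4},\ldots,x_{2k},y_{-2},\ldots,y_{2l} \mid r_{-4},\ldots,r_{2k+1}, s_{-2},\ldots,s_{2l+1}, rs\rangle,
\]
organized so that the Fox-derivative blocks coming from the $r_i$ occupy one band of rows and the blocks coming from the $s_i$ occupy another band, with the columns grouped by the generators. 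Choosing to strike the columns corresponding to the generator $c$ (so that the denominator is $|\Phi(c-1)| = t^2 - \mathrm{tr}\,\rho(c)\,t + 1$), I would perform exactly the same column-clearing operations as in Proposition~\ref{TAP of 2-bridge knot} separately on each band, collapsing each chain into a single column of accumulated blocks.

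\textbf{Key steps.} The reduction along the $x$-chain produces accumulated blocks $M_{2i-1}, M_{2i}$ obeying the same recursion as the $N_i$ of the case (1) (with $R,R'$ replaced by the present $R,R'$), and likewise the $y$-chain produces blocks $N_{2i-1}, N_{2i}$ from the $S,S'$. Because the two chains share no generator until the coupling relation $rs$, the row operations that clear the $x$-band never disturb the $y$-band and vice versa, so the two reductions proceed independently. After both bands are collapsed, the surviving determinant is that of a matrix whose nonzero entries consist of the two terminal accumulated columns from each chain together with the two blocks contributed by the Fox derivatives $\Phi(\partial(rs)/\partial x_{2k}) = E$ and $\Phi(\partial(rs)/\partial y_{2l}) = -Y_{2l}$ (up to the abelianization factor). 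Arranging these into a $2\times 2$ block layout and naming the four accumulated terminal blocks $M_{2k+1}, M_{2k+1}', N_{2l+1}, N_{2l+1}'$ exactly as the four blocks that survive, the cofactor expansion along the struck columns yields
\[
|A_{\rho,c}| = \left|\begin{array}{cc} M_{2k+1} & M_{2k+1}' \\ N_{2l+1} & N_{2l+1}' \end{array}\right|,
\]
and dividing by $|\Phi(c-1)|$ gives the stated formula.

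\textbf{Main obstacle.} The genuinely delicate point is the bookkeeping at the junction: I must verify that the coupling relation $rs$ together with the two terminal relations $r_{2k+1}: x_{2k-1}=x_{2k-2}^{-1}$ and $s_{2l+1}: y_{2l-1}=y_{2l-2}^{-1}$ feed into the final block matrix in precisely the $2\times 2$ pattern claimed, with the $x$-derived data landing in the top row and the $y$-derived data in the bottom row. This requires tracking which columns are cleared and which remain when the chains meet, and checking that no cross terms between the $M$-blocks and $N$-blocks appear prematurely. The recursive collapse of each individual chain is routine once the case (1) computation is granted, so I would cite Proposition~\ref{TAP of 2-bridge knot} for those steps and concentrate the written proof on assembling the four terminal blocks and justifying the block decomposition of the cofactor, leaving the explicit recursions for $M_{2i}, N_{2i}$ to be recorded in parallel with the case (1) formulas.
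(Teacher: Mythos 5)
There is a genuine gap, and it sits exactly at the point you yourself flagged as the main obstacle: the handling of the coupling relation $rs$. You include $rs$ in the presentation, which then has $2k+2l+11$ generators \emph{and} $2k+2l+11$ relations; Wada's invariant (Definition \ref{def of TAP}) requires a deficiency-one presentation, with one fewer relation than generators. After striking the two columns of $c$, your Fox matrix therefore has one more block-row than block-columns, and $|A_{\rho,c}|$ is not defined. Concretely, run your own band reductions: the $2k+6$ rows $r_{-4},\ldots,r_{2k+1}$ pair off against the $2k+5$ columns of $x_{-4},\ldots,x_{2k}$, the $2l+4$ rows $s_{-2},\ldots,s_{2l+1}$ pair off against the $2l+3$ columns of $y_{-2},\ldots,y_{2l}$, and after clearing, \emph{three} rows --- those of $r_{2k+1}$, of $s_{2l+1}$, and of $rs$ --- survive with entries only in the two accumulator columns of $a$ and $b$. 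That is a $3\times 2$ block array, not the claimed $2\times 2$ one, and no cofactor expansion can extract the stated determinant from it.

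The missing idea is that $rs$ is the redundant (Wirtinger) relation and must be discarded: the paper's proof works with
\[
G(K)=\langle a,b,c,x_{-4},\ldots,x_{2k},y_{-2},\ldots,y_{2l}\mid r_{-4},\ldots,r_{2k+1},\,s_{-2},\ldots,s_{2l+1}\rangle,
\]
with $rs$ omitted, which is already a deficiency-one presentation of the knot group. Then $A_{\rho,c}$ is square, the two rows surviving the reduction are exactly those of $r_{2k+1}$ and $s_{2l+1}$, and their accumulated entries are
\[
M_{2k+1}=(-tX_{2k-1})M_{2k-2}+(-E)M_{2k-1},\qquad
N_{2l+1}=(-tY_{2l-1})N_{2l-2}+(-E)N_{2l-1},
\]
together with the primed analogues; this is what ``computed as in the case (1)'' means in the statement, and it is why no Fox derivative of $rs$ appears anywhere in the final formula. (Your computation of those derivatives is also off: for the relator $x_{2k}y_{2l}^{-1}$ one gets $\Phi(\partial (rs)/\partial y_{2l})=-X_{2k}Y_{2l}^{-1}=-E$, not $-Y_{2l}$, since both generators are meridians, so the abelianization factor is $t\cdot t^{-1}=1$, and $X_{2k}=Y_{2l}$.) One could instead keep $rs$ and drop, say, $r_{2k+1}$; that too is a deficiency-one presentation and yields a square matrix, but the surviving rows would then be those of $rs$ and $s_{2l+1}$, and the resulting $2\times 2$ block determinant, while equal to $\Delta_{K,\rho}(t)$ up to a unit, is not the matrix asserted in the proposition, whose top row is built from the Fox derivatives of $r_{2k+1}$. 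The rest of your outline --- two accumulator columns for $a$ and $b$, the independence of the two bands, and the case-(1) recursions --- does match the paper; the proof stands or falls on dropping $rs$, which your version never does.
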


\begin{proof}
For the generator $c$, let $A_{\rho,c}$ be the matrix obtained by removing the 2 columns corresponding to $c$ from the twisted Alexander matrix $A_{\rho}$.
Then, $|A_{\rho,c}|$ is given by

\begin{eqnarray*}
\hspace{-3mm}&& \hspace{-3mm} 
\left|
\begin{array}{ccccccccccccccccccc}
R_{-4} & \hspace{-1mm}O & \hspace{-1mm} E & \hspace{-1mm} O & \hspace{-3mm}\cdots & \hspace{-3mm} O & \hspace{-1mm} O & \hspace{-2mm} O & \hspace{-2mm} O & O & O & O & \hspace{-3mm} \cdots & \hspace{-3mm} O & \hspace{-2mm} O & \hspace{-2mm} O & \hspace{-2mm} O & O\\ 

-E & \hspace{-1mm}O & \hspace{-1mm} O & \hspace{-1mm} E & \hspace{-3mm} & \hspace{-3mm} O & \hspace{-1mm} O & \hspace{-2mm} O & \hspace{-2mm} O & O & O & O & \hspace{-3mm} \cdots & \hspace{-3mm} O & \hspace{-2mm} O & \hspace{-2mm} O & \hspace{-2mm} O & O\\

O & \hspace{-1mm}-E & \hspace{-1mm} O & \hspace{-1mm} O & \hspace{-3mm}\ddots & \hspace{-3mm} O & \hspace{-1mm} O & \hspace{-2mm} O & \hspace{-2mm} O & O & O & O & \hspace{-3mm} \cdots & \hspace{-3mm} O & \hspace{-2mm} O & \hspace{-2mm} O & \hspace{-2mm} O & O\\

O & \hspace{-1mm}O & \hspace{-1mm} R_{-1} & \hspace{-1mm} R_{-1}' & \hspace{-3mm}& \hspace{-3mm} O & \hspace{-1mm} O & \hspace{-2mm} O & \hspace{-2mm} O & O & O & O & \hspace{-3mm} \cdots & \hspace{-3mm} O & \hspace{-2mm} O & \hspace{-2mm} O & \hspace{-2mm} O & O\\

O & \hspace{-1mm}O & \hspace{-1mm} R_{0} & \hspace{-1mm} R_{0}' &\hspace{-3mm} & \hspace{-3mm} O & \hspace{-1mm} O & \hspace{-2mm} O & \hspace{-2mm} O & O & O & O & \hspace{-3mm} \cdots & \hspace{-3mm} O & \hspace{-2mm} O & \hspace{-2mm} O & \hspace{-2mm} O & O\\

\vdots & \hspace{-1mm}\vdots & \hspace{-1mm} & \hspace{-1mm} & \hspace{-3mm}\ddots & \hspace{-3mm} & \hspace{-1mm}  & \hspace{-2mm} \ddots & \hspace{-2mm} & \vdots & \vdots & \vdots & \hspace{-3mm} & \hspace{-3mm} \vdots & \hspace{-2mm} \vdots & \hspace{-2mm} \vdots & \hspace{-2mm} \vdots & \vdots\\

O & \hspace{-1mm}O & \hspace{-1mm} O & \hspace{-1mm} O & \hspace{-3mm} & \hspace{-3mm} R_{2k-1} & \hspace{-1mm} R_{2k-1}' & \hspace{-2mm} O & \hspace{-2mm} E & O & O & O & \hspace{-3mm} \cdots & \hspace{-3mm} O & \hspace{-2mm} O & \hspace{-2mm} O & \hspace{-2mm} O & O\\ 

O & \hspace{-1mm}O & \hspace{-1mm} O & \hspace{-1mm} O & \hspace{-3mm} & \hspace{-3mm} R_{2k} & \hspace{-1mm} R_{2k}' & \hspace{-2mm} O & \hspace{-2mm} O & E & O & O & \hspace{-3mm} \cdots & \hspace{-3mm} O & \hspace{-2mm} O & \hspace{-2mm} O & \hspace{-2mm} O & O\\ 

O & \hspace{-1mm}O & \hspace{-1mm} O & \hspace{-1mm} O & \hspace{-3mm}\cdots & \hspace{-3mm} O & \hspace{-1mm} O & \hspace{-2mm} t X_{2k-1} & \hspace{-2mm}E & O & O & O & \hspace{-3mm} \cdots & \hspace{-3mm} O & \hspace{-2mm} O & \hspace{-2mm} O & \hspace{-2mm} O & O\\ 

O & \hspace{-1mm}-E & \hspace{-1mm} O & \hspace{-1mm} O & \hspace{-3mm}\cdots & \hspace{-3mm} O & \hspace{-1mm} O & \hspace{-2mm} O & \hspace{-2mm} O & O & E & O & \hspace{-3mm} \cdots & \hspace{-3mm} O & \hspace{-2mm} O & \hspace{-2mm} O & \hspace{-2mm} O & O\\ 

O & \hspace{-1mm}O & \hspace{-1mm} O & \hspace{-1mm} O & \hspace{-3mm}\cdots & \hspace{-3mm} O & \hspace{-1mm} O & \hspace{-2mm} O & \hspace{-2mm} O & O & O & E & \hspace{-3mm} & \hspace{-3mm} O & \hspace{-2mm} O & \hspace{-2mm} O & \hspace{-2mm} O & O\\ 

S_0 & \hspace{-1mm}O & \hspace{-1mm} O & \hspace{-1mm} O & \hspace{-3mm}\cdots & \hspace{-3mm} O & \hspace{-1mm} O & \hspace{-2mm} O & \hspace{-2mm} O & O & O & O & \hspace{-3mm} \ddots & \hspace{-3mm} O & \hspace{-2mm} O & \hspace{-2mm} O & \hspace{-2mm} O & O\\ 

O & \hspace{-1mm}O & \hspace{-1mm} O & \hspace{-1mm} O & \hspace{-3mm}\cdots & \hspace{-3mm} O & \hspace{-1mm} O & \hspace{-2mm} O & \hspace{-2mm} O & O & S_1 & S_1' & \hspace{-3mm}  & \hspace{-3mm} O & \hspace{-2mm} O & \hspace{-2mm} O & \hspace{-2mm} O & O\\ 

O & \hspace{-1mm}O & \hspace{-1mm} O & \hspace{-1mm} O & \hspace{-3mm}\cdots & \hspace{-3mm} O & \hspace{-1mm} O & \hspace{-2mm} O & \hspace{-2mm} O & O & S_2 & S_2' & \hspace{-3mm} & \hspace{-3mm} O & \hspace{-2mm} O & \hspace{-2mm} O & \hspace{-2mm} O & O\\ 

\vdots & \hspace{-1mm}\vdots & \hspace{-1mm} \vdots & \hspace{-1mm} \vdots & \hspace{-3mm} & \hspace{-3mm} \vdots & \hspace{-1mm} \vdots & \hspace{-2mm} \vdots & \hspace{-2mm} \vdots & \vdots &  &  & \hspace{-3mm} \ddots & \hspace{-3mm} & \hspace{-2mm} & \hspace{-2mm} \ddots &\hspace{-2mm}  & \vdots\\

O & \hspace{-1mm}O & \hspace{-1mm} O & \hspace{-1mm} O & \hspace{-3mm}\cdots & \hspace{-3mm} O & \hspace{-1mm} O & \hspace{-2mm} O & \hspace{-2mm} O & O & O & O & \hspace{-3mm} & \hspace{-3mm} S_{2l-1} & \hspace{-2mm} S_{2l-1}' & \hspace{-2mm} O & \hspace{-2mm} E & O\\ 

O & \hspace{-1mm}O & \hspace{-1mm} O & \hspace{-1mm} O & \hspace{-3mm}\cdots & \hspace{-3mm} O & \hspace{-1mm} O & \hspace{-2mm} O & \hspace{-2mm} O & O & O & O & \hspace{-3mm} & \hspace{-3mm} S_{2l} & \hspace{-2mm} S_{2l}' & \hspace{-2mm} O & \hspace{-2mm} O & E\\ 

O & \hspace{-1mm}O & \hspace{-1mm} O & \hspace{-1mm} O & \hspace{-3mm}\cdots & \hspace{-3mm} O & \hspace{-1mm} O & \hspace{-2mm} O & \hspace{-2mm} O & O & O & O & \hspace{-3mm} \cdots & \hspace{-3mm} O & \hspace{-2mm} O & \hspace{-2mm} t Y_{2l-1} & \hspace{-2mm} E & O
\end{array}
\right| \\
\hspace{-3mm}&=& \hspace{-3mm} 
\left|
\begin{array}{ccccccccccccccccccc}
M_{-4} & M_{-4}' & E & O & \cdots & O & O & O & O & O & O & O & \cdots & O & O & O & O & O\\ 

M_{-3} & M_{-3}' & O & E & & O & O & O & O & O & O & O & \cdots & O & O & O & O & O\\

M_{-2} & M_{-2}' & O & O & \ddots & O & O & O & O & O & O & O & \cdots & O & O & O & O & O\\

M_{-1} & M_{-1}' & O & O & & O & O & O & O & O & O & O & \cdots & O & O & O & O & O\\

M_{0} & M_{0}' & O & O & & O & O & O & O & O & O & O & \cdots & O & O & O & O & O\\

\vdots & \vdots & & & & & & \ddots & & \vdots & \vdots & \vdots & & \vdots & \vdots & \vdots & \vdots & \vdots\\

M_{2k-1} & M_{2k-1}' & O & O & \cdots & O & O & O & E & O & O & O & \cdots & O & O & O & O & O\\ 

M_{2k} & M_{2k}' & O & O & \cdots & O & O & O & O & E & O & O & \cdots & O & O & O & O & O\\ 

M_{2k+1} & M_{2k+1}'  & O & O & \cdots & O & O & O & O & O & O & O & \cdots & O & O & O & O & O\\ 

N_{-2} & N_{-2}' & O & O & \cdots & O & O & O & O & O & E & O & \cdots & O & O & O & O & O\\ 

N_{-1} & N_{-1}' & O & O & \cdots & O & O & O & O & O & O & E &  & O & O & O & O & O\\ 

N_{0} & N_{0}' & O & O & \cdots & O & O & O & O & O & O & O & \ddots & O & O & O & O & O\\ 

N_{1} & N_{1}' & O & O & \cdots & O & O & O & O & O & O & O & & O & O & O & O & O\\ 

N_{2} & N_{2}' & O & O & \cdots & O & O & O & O & O & O & O & & O & O & O & O & O\\ 

\vdots & \vdots & \vdots & \vdots &  & \vdots & \vdots & \vdots & \vdots & \vdots & \vdots & \vdots & & & & \ddots & & \vdots\\

N_{2l-1} & N_{2l-1}' & O & O & \cdots & O & O & O & O & O & O & O & \cdots & O & O & O & E & O\\ 

N_{2l} & N_{2l}' & O & O & \cdots & O & O & O & O & O & O & O & \cdots & O & O & O & O & E\\ 

N_{2l+1} & N_{2l+1}' & O & O & \cdots & O & O & O & O & O & O & O & \cdots & O & O & O & O& O
\end{array}
\right| \\
\hspace{-3mm}&=& \hspace{-3mm} 
\left|
\begin{array}{cc}
M_{2k+1} & M_{2k+1}'\\
N_{2l+1} & N_{2l+1}'
\end{array}
\right|,
\end{eqnarray*}
where
the matrix $M_{2k+1}$, $M_{2k+1}'$, $N_{2l+1}$ and $N_{2l+1}$ are defined as follows:
We consider the sequences $\{M_i\}^{2k+1}_{-4}$ and $\{M_i'\}^{2k+1}_{-4}$ which are defined by
 \begin{align*}
M_{-4} =&\ R_{-4},\
M_{-3} = -E,\ 
M_{-2} = O,\\
M'_{-4} =&\ O,\ \ \ \ 
M'_{-3} = O,\ \ \ \! \
M'_{-2} = -E,
 \end{align*}
 and
\begin{align*}
M_{2i-1} =&\ (-R_{2i-1} )M_{2i-4} + (-R'_{2i-1}) M_{2i-3},\\
M_{2i} =&\ (-R_{2i} )M_{2i-4} + (-R'_{2i}) M_{2i-3},\\
M_{2i-1}' =&\ (-R_{2i-1} )M_{2i-4}' + (-R'_{2i-1}) M_{2i-3}',\\
M_{2i}' =&\ (-R_{2i} )M_{2i-4}' + (-R'_{2i}) M_{2i-3}',
\end{align*}
for $0 \le i \le k$. Then we put
\begin{align*}
M_{2k+1}=&\ (-t X_{2k-1}) M_{2k-2} + (-E) M_{2k-1},\\
M_{2k+1}'=&\ (-t X_{2k-1}) M_{2k-2}' + (-E) M_{2k-1}'.
\end{align*}
Similarly, we consider the sequences $\{N_i\}^{2l+1}_{-2}$ and $\{N_i'\}^{2l+1}_{-2}$ which are defined by 
 \begin{align*}
N_{-2} =&\ O,\ \ \ \ \!
N_{-1} = O,\
N_{0} = S_0,\\
N'_{-2} =& -E,\
N'_{-1} = O,\
N'_{0} = O,
 \end{align*}
 and
  \begin{align*}
N_{2i-1} =&\ (-S_{2i-1} )N_{2i-4} + (-S'_{2i-1}) N_{2i-3},\\
N_{2i} =&\ (-S_{2i} )N_{2i-4} + (-S'_{2i}) N_{2i-3},\\
N_{2i-1}' =&\ (-S_{2i-1} )N_{2i-4}' + (-S'_{2i-1}) N_{2i-3}',\\
N_{2i} =&\ (-S_{2i} )N_{2i-4}' + (-S'_{2i}) N_{2i-3}',
 \end{align*}
for $1 \le i \le l$. Then we set
\begin{align*}
N_{2l+1} =&\ (-t Y_{2l-1}) N_{2l-2} + (-E) N_{2l-1},\\
N_{2l+1}' =&\ (-t Y_{2l-1}) N_{2l-2}' + (-E) N_{2l-1}'.
 \end{align*}
\end{proof}

\begin{lem}
If $m_0 \neq 0$, then the matrix $\left|
\begin{array}{cc}
M_{2k+1} & M_{2k+1}'\\
N_{2l+1} & N_{2l+1}'
\end{array}
\right|$
is given by
\begin{eqnarray*}
\left|
\begin{array}{cc}
M_{2k+1} & M_{2k+1}'\\
N_{2l+1} & N_{2l+1}'
\end{array}
\right|
=t^{-(k+l)} |M_{2k+1}^{\min}| |N_{2l+1}^{' \min}| + \cdots + t^{k+ l + 4} |M_{2k+1}^{\max}| |N_{2l+1}^{' \max}| ,
 \end{eqnarray*}
 where
 \begin{align*}
|M_{2k+1}^{\max}| =|M_{2k+1}^{\min}| =& \prod^k_{i=0} \left| \sum^{|m_i|}_{j=1} (X_{2i-3}^{(-1)^i} X_{2i-4}^{(-1)^i})^j \right|,\\
|N_{2l+1}^{' \max}| = |N_{2l+1}^{' \min}| =& \prod^l_{i=1} \left| \sum^{|n_i|}_{j=1} (Y_{2i-3}^{(-1)^i} Y_{2i-4}^{(-1)^i})^j \right|.
 \end{align*} 
\end{lem}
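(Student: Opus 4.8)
The plan is to carry over the computation of Theorem~\ref{2-bridge knot} to each of the two families of generators separately, and to couple them only through the final $4\times 4$ block determinant. First I note that the four sequences $\{M_i\}$, $\{M_i'\}$, $\{N_i\}$, $\{N_i'\}$ all satisfy the same two-term matrix recursions as the sequence $\{N_i\}$ of the case~(1), so the inductive argument of Proposition~\ref{N_i} applies to each of them: every $M_{2k+1}, M_{2k+1}', N_{2l+1}, N_{2l+1}'$ is a Laurent polynomial in $t$ with $2\times 2$ matrix coefficients, and running the induction simultaneously pins down the top and bottom degrees together with the extreme coefficient matrices. The only difference among the four sequences is the placement of their seeds: $M$ is seeded at index $-4$ (by $R_{-4}$) and $N'$ at index $-2$ (by $-E$), so both enter the recursion at the first twist region, whereas $M'$ is seeded at index $-2$ and $N$ at index $0$, so both enter one twist region later.

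For the diagonal survivors $M_{2k+1}$ and $N_{2l+1}'$ the extreme coefficients are obtained by the same telescoping computation as $N_{2k}^{\max}$ and $N_{2k}^{\min}$ in the proof of Theorem~\ref{2-bridge knot}. Because $M$ participates from the region $i=0$, its leading and trailing determinants both collapse to
\[
|M_{2k+1}^{\max}| = |M_{2k+1}^{\min}| = \prod_{i=0}^{k} \left| \sum_{j=1}^{|m_i|} (X_{2i-3}^{(-1)^i} X_{2i-4}^{(-1)^i})^j \right|,
\]
and, because $N'$ participates from the region $i=1$ while the tangle $\beta_3/\alpha_3$ carries no leading $2 n_0$ term, the $y$-product starts at $i=1$:
\[
|N_{2l+1}^{'\max}| = |N_{2l+1}^{'\min}| = \prod_{i=1}^{l} \left| \sum_{j=1}^{|n_i|} (Y_{2i-3}^{(-1)^i} Y_{2i-4}^{(-1)^i})^j \right|.
\]
The hypothesis $m_0\neq 0$ is exactly what guarantees that the $i=0$ factor $\bigl|\sum_{j=1}^{|m_0|}(X_{-3}X_{-4})^j\bigr|$ is genuinely present, so that no degree collapse occurs on the diagonal.

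Next I expand the block determinant by the generalized Laplace expansion along its top two rows,
\[
\left|
\begin{array}{cc}
M_{2k+1} & M_{2k+1}'\\
N_{2l+1} & N_{2l+1}'
\end{array}
\right|
= |M_{2k+1}|\,|N_{2l+1}'| + |M_{2k+1}'|\,|N_{2l+1}| + (\text{four mixed } 2\times 2 \text{ minor products}),
\]
and reduce the statement to a single degree comparison. Writing $d_\bullet^{+}$ and $d_\bullet^{-}$ for the top and bottom degrees of the four blocks, a term-by-term inspection of the Leibniz expansion shows that the block-diagonal term $|M_{2k+1}|\,|N_{2l+1}'|$ strictly dominates both the anti-diagonal term and all four mixed terms at each extreme precisely when
\[
d_M^{+} + d_{N'}^{+} > d_{M'}^{+} + d_{N}^{+}, \qquad d_M^{-} + d_{N'}^{-} < d_{M'}^{-} + d_{N}^{-}.
\]
These inequalities are delivered by the first step: since $M'$ and $N$ each enter the recursion one twist region later than $M$ and $N'$, they skip the degree contributed by that first region (the $m_0$ region for $M'$, the $n_1$ region for $N$), so $d_M^{+} > d_{M'}^{+}$ and $d_{N'}^{+} > d_{N}^{+}$ strictly, and dually at the bottom. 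Granting them, the extreme coefficients of the block determinant factor as $|M_{2k+1}^{\max}|\,|N_{2l+1}'^{\max}|$ and $|M_{2k+1}^{\min}|\,|N_{2l+1}'^{\min}|$, while the extra factor of $t$ produced by each closing step $M_{2k+1}=-tX_{2k-1}M_{2k-2}-M_{2k-1}$ and $N_{2l+1}=-tY_{2l-1}N_{2l-2}-N_{2l-1}$ accounts for the exponents $k+l+4$ and $-(k+l)$ in the statement.

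The main obstacle is the degree bookkeeping behind the two displayed inequalities: one must run the induction of Proposition~\ref{N_i} for all four sequences, track precisely how each twist region shifts the top and bottom degrees, and confirm that the delayed entry of $M'$ and $N$ costs them exactly the degree of one region. One must also verify that the four mixed minors---which pair a column of $M_{2k+1}$ with a column of $M_{2k+1}'$ in the top rows, and a column of $N_{2l+1}$ with a column of $N_{2l+1}'$ in the bottom rows---cannot reach the extreme degrees; this again reduces to the same inequalities. Once these are in hand, the coefficient identities established above finish the proof.
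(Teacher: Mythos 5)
Your overall architecture is the same as the paper's: run the case (1) induction on the four sequences, expand the $4\times 4$ block determinant, observe that the block-diagonal product $|M_{2k+1}|\,|N'_{2l+1}|$ dominates at both extremes, and compute the extreme coefficients by the telescoping of Theorem \ref{2-bridge knot}. However, the mechanism by which you justify the two dominance inequalities is wrong. Carrying out the induction (as the paper does) gives
\[
M_{2k+1}= t^{-\frac{k}{2}-1} M_{2k+1}^{-\frac{k}{2}-1} + \cdots + t^{\frac{k}{2}+1} M_{2k+1}^{\frac{k}{2}+1},\qquad
M_{2k+1}'= t^{-\frac{k}{2}+1} {M'}_{2k+1}^{-\frac{k}{2}+1} + \cdots + t^{\frac{k}{2}+1} {M'}_{2k+1}^{\frac{k}{2}+1},
\]
\[
N_{2l+1} = t^{-\frac{l}{2}} N_{2l+1}^{-\frac{l}{2}} + \cdots + t^{\frac{l}{2}} N_{2l+1}^{\frac{l}{2}},\qquad
N_{2l+1}' = t^{-\frac{l}{2}+1} {N'}_{2l+1}^{-\frac{l}{2}+1} + \cdots + t^{\frac{l}{2}+1} {N'}_{2l+1}^{\frac{l}{2}+1}.
\]
So your claim that $d_M^{+}>d_{M'}^{+}$ strictly is false: both top degrees equal $\frac{k}{2}+1$. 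The ``dual'' claim at the bottom for the $N$-pair is also false: the bottom degree $-\frac{l}{2}+1$ of $N'_{2l+1}$ is \emph{higher}, not lower, than the bottom degree $-\frac{l}{2}$ of $N_{2l+1}$. In reality the $N'$ block is the $N$ block shifted up by one at both ends, and the $M$ block is widened by two at the bottom only (this comes from the $t$-degree of the seed $R_{-4}$, which carries $t^{-2}$, not from ``skipping'' the $m_0$ region). Your heuristic that delayed entry costs exactly one region's worth of degree therefore mispredicts two of the four endpoint relations, and the verification you defer to ``running the induction'' would not confirm what you assert it will.

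Fortunately, only the sums matter in your reduction, and with the correct ranges both inequalities survive, each with margin exactly one: $d_M^{+}+d_{N'}^{+} = \frac{k+l}{2}+2 > \frac{k+l}{2}+1 = d_{M'}^{+}+d_N^{+}$, where the slack comes entirely from the $N$-pair, and $d_M^{-}+d_{N'}^{-} = -\frac{k+l}{2} < -\frac{k+l}{2}+1 = d_{M'}^{-}+d_N^{-}$, where the slack comes entirely from the $M$-pair, whose gap of two absorbs the fact that $N'$ sits one above $N$. Once the incorrect intermediate inequalities are replaced by these direct computations, your Laplace-expansion argument, the exponent count ($k+l+4$ at the top, $-(k+l)$ at the bottom), and the telescoping evaluation of $|M_{2k+1}^{\max}|$, $|M_{2k+1}^{\min}|$, $|N_{2l+1}'^{\max}|$, $|N_{2l+1}'^{\min}|$ all go through and coincide with the paper's proof.
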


\begin{proof}
The matrices $M_{2k+1}$ and $M'_{2k+1}$ are computed as in the case (1), that is, we have
\begin{align*}
M_{2k+1}=& t^{-\frac{k}{2}-1} M_{2k+1}^{-\frac{k}{2}-1} + \cdots + t^{\frac{k}{2}+1} M_{2k+1}^{\frac{k}{2}+1},\\
M_{2k+1}'=&  t^{-\frac{k}{2}+1} {M'}_{2k+1}^{-\frac{k}{2}+1} + \cdots + t^{\frac{k}{2}+1} {M'}_{2k+1}^{\frac{k}{2}+1}.
\end{align*}
Similarly, we obtain 
 \begin{align*}
N_{2l+1} =& t^{-\frac{l}{2}} N_{2l+1}^{-\frac{l}{2}} + \cdots + t^{\frac{l}{2}} N_{2l+1}^{\frac{l}{2}},\\
N_{2l+1}' =& t^{-\frac{l}{2}+1} {N'}_{2l+1}^{-\frac{l}{2}+1} + \cdots + t^{\frac{l}{2}+1} {N'}_{2l+1}^{\frac{l}{2}+1}.
 \end{align*}
Hence, we have
\begin{align*} 
\left|
\begin{array}{cc}
M_{2k+1} & M_{2k+1}'\\
N_{2l+1} & N_{2l+1}'
\end{array}
\right| 
=&
\left|
\begin{array}{cc}
t^{-\frac{k}{2}-1} M_{2k+1}^{-\frac{k}{2}-1} + \cdots + t^{\frac{k}{2}+1} M_{2k+1}^{\frac{k}{2}+1} & t^{-\frac{k}{2}+1} {M'}_{2k+1}^{-\frac{k}{2}+1} + \cdots + t^{\frac{k}{2}+1} {M'}_{2k+1}^{\frac{k}{2}+1}\\
t^{-\frac{l}{2}} N_{2l+1}^{-\frac{l}{2}} + \cdots + t^{\frac{l}{2}} N_{2l+1}^{\frac{l}{2}} & t^{-\frac{l}{2}+1} {N'}_{2l+1}^{-\frac{l}{2}+1} + \cdots + t^{\frac{l}{2}+1} {N'}_{2l+1}^{\frac{l}{2}+1}
\end{array}
\right| \\
=& t^{-(k+l)} \left|M_{2k+1}^{-\frac{k}{2}-1} \right|  \left| {N'}_{2l+1}^{-\frac{l}{2}+1} \right| + \cdots + t^{k+l+4}  \left|M_{2k+1}^{\frac{k}{2}+1} \right|  \left|{N'}_{2l+1}^{\frac{l}{2}+1} \right|.
\end{align*} 
Then, since we have
\begin{align*}
M_{2k+1}^{\max}:=& M_{2k+1}^{\frac{k}{2}+1}\\ 
=& (-1)^{\frac{k}{2}+1}  \frac{|m_k m_{k-1}\cdots m_1|}{m_k m_{k-1} \cdots m_1}  X_{2k-1}\\
& \Bigl\{  \sum_{j}^{} (X_{2k-3} X_{2k-4})^j X_{2k-4}^{-1} \sum_{j}^{} (X_{2k-5}^{-1} X_{2k-6}^{-1})^j \Bigr\} \cdots 
\Bigl\{\sum_{j}^{} (X_{1} X_{0})^j  X_{0}^{-1}  \sum_{j}^{} (X_{-1}^{-1} X_{-2}^{-1})^j \Bigr\} X_{-1}^{-1} M_{-1}^{\max},\\
M_{2k+1}^{\min}:=& M_{2k+1}^{-\frac{k}{2}-1} \\
=& (-1)^{\frac{k}{2}+1} \frac{|m_k m_{k-1}\cdots m_1|}{m_k m_{k-1} \cdots m_1} \sum_{j}^{} (X_{2k-3} X_{2k-4})^j  \\
& \Bigl\{ \sum_{j}^{} (X_{2k-5}^{-1} X_{2k-6}^{-1})^j X_{2k-6} \sum_{j}^{} (X_{2k-7} X_{2k-8})^j\Bigr\} \cdots 
\Bigl\{ \sum_{j}^{} (X_{3}^{-1} X_{2}^{-1})^j  X_2\sum_{j}^{} (X_{1} X_{0})^j  X_{0}^{-1}  \Bigr\}\\
& \sum_{j}^{} (X_{-1}^{-1} X_{-2}^{-1})^j X_{-2} M_{-1}^{\min},
\end{align*}
where
\begin{align*}
M_{-1}^{\max}=& 
\begin{cases}
\frac{|m_0|}{m_0} X_{-1} \sum_{j}^{} (X_{-3} X_{-4})^j X_{-1}^{-1} A^{-1} &  $if$ \ \beta_1 > 0,\\
-\frac{|m_0|}{m_0} X_{-1} \sum_{j}^{} (X_{-3} X_{-4})^j X_{-1}^{-1} A^{-1} C A^{-1} &  $if$ \ \beta_1 < 0,
\end{cases}\\
M_{-1}^{\min}=& 
\begin{cases}
\frac{|m_0|}{m_0} \sum_{j}^{} (X_{-3} X_{-4})^j C^{-1} &  $if$ \ \beta_1 > 0,\\
-\frac{|m_0|}{m_0} \sum_{j}^{} (X_{-3} X_{-4})^j A^{-1} &  $if$ \ \beta_1 < 0,
\end{cases}
\end{align*}
we get
 \begin{align*}
|M_{2k+1}^{\max}| =|M_{2k+1}^{\min}| =& \prod^k_{i=0} \left| \sum^{|m_i|}_{j=1} (X_{2i-3}^{(-1)^i} X_{2i-4}^{(-1)^i})^j \right|. 
\end{align*}

Similarly, since we have
\begin{align*}
{N'}_{2l+1}^{\max}:=& {N'}_{2l+1}^{\frac{l}{2}+1}\\ 
=& (-1)^{\frac{l}{2}}  \frac{|n_l n_{l-1}\cdots n_1|}{n_l n_{l-1} \cdots n_1}  Y_{2l-1}\\
& \Bigl\{  \sum_{j}^{} (Y_{2l-3} Y_{2l-4})^j Y_{2l-4}^{-1} \sum_{j}^{} (Y_{2l-5}^{-1} Y_{2l-6}^{-1})^j \Bigr\} \cdots 
\Bigl\{\sum_{j}^{} (Y_{1} Y_{0})^j  Y_{0}^{-1}  \sum_{j}^{} (Y_{-1}^{-1} Y_{-2}^{-1})^j \Bigr\} ,\\
{N'}_{2l+1}^{\min}:=& {N'}_{2l+1}^{-\frac{l}{2}+1} \\
=& (-1)^{\frac{l}{2}}  \frac{|n_l n_{l-1}\cdots n_1|}{n_l n_{l-1} \cdots n_1} \sum_{j}^{} (Y_{2l-3} Y_{2l-4})^j  \\
& \Bigl\{ \sum_{j}^{} (Y_{2l-5}^{-1} Y_{2l-6}^{-1})^j Y_{2l-6} \sum_{j}^{} (Y_{2l-7} Y_{2l-8})^j\Bigr\} \cdots 
\Bigl\{ \sum_{j}^{} (Y_{3}^{-1} Y_{2}^{-1})^j  Y_2\sum_{j}^{} (Y_{1} Y_{0})^j Y_{0}^{-1}  \Bigr\}\\
& \sum_{j}^{} (Y_{-1}^{-1} Y_{-2}^{-1})^j ,
\end{align*}
we obtain
 \begin{align*}
|N_{2l+1}^{' \max}| = |N_{2l+1}^{' \min}| =& \prod^l_{i=1} \left| \sum^{|n_i|}_{j=1} (Y_{2i-3}^{(-1)^i} Y_{2i-4}^{(-1)^i})^j \right|.
 \end{align*} 

\end{proof}

\begin{lem}
If $m_0 = 0$, then
$\left|
\begin{array}{cc}
M_{2k+1} & M_{2k+1}'\\
N_{2l+1} & N_{2l+1}'
\end{array}
\right|$
is given by
\begin{eqnarray*}
 t^{-(k+l)+2} \left|
\begin{array}{cc}
\hspace{-1mm} M_{2k+1}^{\min} & \hspace{-2mm} M_{2k+1}^{' \min} \hspace{-1mm} \\
\hspace{-1mm} N_{2l+1}^{\min} & \hspace{-2mm} N_{2l+1}^{' \min} \hspace{-1mm} 
\end{array}
\right|
 + \cdots + 
 t^{k+ l + 2} 
 \left|
\begin{array}{cc}
\hspace{-1mm} M_{2k+1}^{\max} & \hspace{-2mm} M_{2k+1}^{' \max} \hspace{-1mm} \\
\hspace{-1mm} N_{2l+1}^{\max} & \hspace{-2mm} N_{2l+1}^{' \max} \hspace{-1mm} 
\end{array}
\right|
 \end{eqnarray*}
 where
 $\left|
\begin{array}{cc}
\hspace{-1mm} M_{2k+1}^{\max} & \hspace{-2mm} M_{2k+1}^{' \max} \hspace{-1mm} \\
\hspace{-1mm} N_{2l+1}^{\max} & \hspace{-2mm} N_{2l+1}^{' \max} \hspace{-1mm} 
\end{array}
\right|$
and 
$\left|
\begin{array}{cc}
\hspace{-1mm} M_{2k+1}^{\min} & \hspace{-2mm} M_{2k+1}^{' \min} \hspace{-1mm} \\
\hspace{-1mm} N_{2l+1}^{\min} & \hspace{-2mm} N_{2l+1}^{' \min} \hspace{-1mm} 
\end{array}
\right|$ are given by
 \begin{align*}
\prod^k_{i=2} \left| \sum^{|m_i|}_{j=1} (X_{2i-3}^{(-1)^i} X_{2i-4}^{(-1)^i})^j \right|
\prod^l_{i=2} \left| \sum^{|n_i|}_{j=1} (Y_{2i-3}^{(-1)^i} Y_{2i-4}^{(-1)^i})^j \right| 
\lambda,
 \end{align*} 
 and
 \begin{align*}
\lambda=
 \begin{cases}
 \vspace{2mm}
\biggl| \frac{|m_1|}{m_1} \Bigl(\sum_j (B A)^j\Bigr) + \frac{|n_1|}{n_1}A^{-1} C \Bigl(\sum_j (B C)^j\Bigr) 
 + \frac{|m_1 n_1|}{m_1 n_1}\Bigl(\sum_j (B A)^j\Bigr) \Bigl(\sum_j (B C)^j\Bigr)BC \biggr| &  $if$ \ \beta_1 > 0,\\
\biggl| \frac{|m_1|}{m_1} \Bigl(\sum_j (B A)^j\Bigr) + \frac{|n_1|}{n_1}  A C^{-1} \Bigl(\sum_j (B C)^j\Bigr) 
 - \frac{|m_1 n_1|}{m_1 n_1}\Bigl(\sum_j (B A)^j\Bigr) \Bigl(\sum_j (B C)^j\Bigr) \biggr| &  $if$ \ \beta_1 < 0.
\end{cases}
\end{align*}
\end{lem}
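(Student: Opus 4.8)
The plan is to rerun the degree analysis of the previous lemma, but to exploit the fact that setting $m_0=0$ collapses the innermost twist region of the $x$-chain. Concretely, when $m_0=0$ the relations $r_1,r_2$ degenerate to $x_1=x_{-3}$ and $x_2=x_{-4}$, so in the formulas for the extreme coefficients recorded in the proof of the previous lemma the empty sum $\sum_{j=1}^{|m_0|}(\cdot)^j$ forces $M_{-1}^{\max}=M_{-1}^{\min}=O$. First I would feed this vanishing through the case-(1) recursion for $\{M_i\}$ and $\{M_i'\}$ to show that the top degree of $M_{2k+1}$ drops by one, to $\tfrac{k}{2}$, while $M_{2k+1}'$ retains top degree $\tfrac{k}{2}+1$; symmetrically the bottom degree of $M_{2k+1}$ rises by one. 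The $N$-chains do not involve $m_0$ and so keep $\deg N_{2l+1}^{\max}=\tfrac{l}{2}$, $\deg N_{2l+1}'^{\max}=\tfrac{l}{2}+1$, and the mirror statements for the minima.

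The key structural step is then to observe that these degrees make the entry-degree pattern of the block matrix $\left|\begin{smallmatrix}M_{2k+1}&M_{2k+1}'\\ N_{2l+1}&N_{2l+1}'\end{smallmatrix}\right|$ \emph{separable} at each extreme: the top $t$-degree of the $(i,j)$ entry can be written $\rho_i+\gamma_j$ with $\rho=(\tfrac{k}{2},\tfrac{k}{2},\tfrac{l}{2},\tfrac{l}{2})$ along the rows and $\gamma=(0,0,1,1)$ along the columns, so every permutation in the Leibniz expansion of the $4\times4$ determinant contributes at the same total degree $k+l+2$. Hence, unlike the $m_0\neq0$ case where only the block-diagonal term survived and produced the product $|M^{\max}||N'^{\max}|$, here the \emph{full} block determinant of leading coefficients survives. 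This is precisely the mechanism that converts the two separate products into the single mixed determinant $\left|\begin{smallmatrix}M_{2k+1}^{\max}&M_{2k+1}'^{\max}\\ N_{2l+1}^{\max}&N_{2l+1}'^{\max}\end{smallmatrix}\right|$ together with its $\min$ analogue, and it pins the extreme exponents at $t^{k+l+2}$ and $t^{-(k+l)+2}$ as claimed.

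Next I would peel off the outer twist data exactly as in case (1). Each step $i=2,\dots,k$ (resp. $i=2,\dots,l$) contributes a left factor of the form $-\tfrac{|m_i|}{m_i}\sum_j(X_{2i-3}^{(-1)^i}X_{2i-4}^{(-1)^i})^j$ (resp. with $n_i,Y$) to the corresponding block row; since these act as common invertible left multipliers on both columns of that row, they factor out of the block determinant, yielding the product $\prod_{i=2}^k\bigl|\sum_{j=1}^{|m_i|}(\cdots)^j\bigr|\,\prod_{i=2}^l\bigl|\sum_{j=1}^{|n_i|}(\cdots)^j\bigr|$ and leaving an inner $4\times4$ core determinant $\lambda$ governed only by the $i=0,1$ data of the $x$-chain and the $i=1$ data of the $y$-chain.

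Finally I would evaluate $\lambda$ explicitly. Substituting the junction values $X_{-4}=C^{-1}A^{-1}C$, $X_{-3}=A$, $X_{-2}=B$, $Y_{-2}=B$, $Y_{-1}=C$, $Y_0=C^{-1}AC^{-1}A^{-1}C$, the $m_1$- and $n_1$-twist matrices $\sum_j(BA)^j$ and $\sum_j(BC)^j$, and the two base cases $M_{-1}^{\max/\min}$ (which differ according to $\beta_1\gtrless0$), then expanding the $4\times4$ determinant and simplifying with the $\det=1$ relations in $SL_2(\bC)$, yields the two displayed three-term formulas for $\lambda$; the $\min$ computation is parallel and gives the same value, so both extreme coefficients agree. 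The hard part is this last step: the noncommutative bookkeeping of exactly which leading matrices survive across the junction after the degree realignment, together with the separate assembly of the three-term expression in the $\beta_1>0$ and $\beta_1<0$ cases. Everything preceding it is degree accounting plus the telescoping already established in case (1).
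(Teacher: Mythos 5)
Your proposal follows the paper's proof essentially step for step: the collapse of the innermost relations when $m_0=0$ (in the paper's indexing these are $r_{-1},r_0$, giving $x_{-1}=x_{-3}$, $x_0=x_{-4}$ and hence $R_{-1}=O$, $R'_{-1}=-E$, $R_0=-E$, $R'_0=O$, rather than $r_1,r_2$), the resulting one-step contraction of the degree range of $M_{2k+1}$ to $[-\tfrac{k}{2},\tfrac{k}{2}]$ with $M'_{2k+1}$, $N_{2l+1}$, $N'_{2l+1}$ unchanged, the survival of the full mixed $4\times 4$ determinant of extreme coefficients at $t^{k+l+2}$ and $t^{-(k+l)+2}$, the factoring of the outer twist data as common left factors $\mathrm{diag}(M,N)$ giving $|M||N|\lambda$, and the explicit evaluation of $\lambda$ in the two cases $\beta_1\gtrless 0$. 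The only place you are more explicit than the paper is the separable-degree (Leibniz) argument for why the block determinant of leading coefficients, rather than a product of two $2\times 2$ determinants, is what survives; the paper asserts this by simply writing out the expansion.
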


\begin{proof}
If $m_0=0$, since we have
 \begin{align*}
 x_{-1}=x_{-3},\\
 x_0= x_{-4},
 \end{align*}
then we get
 \begin{align*}
 R_{-1}=O,\
 R_{-1}' = -E,\
 R_0 = -E,\
 R_0' = O.
 \end{align*}
 Hence, we can compute as in the case of $m_0 \neq 0$ and obtain 
\begin{align*}
M_{2k+1} =& t^{-\frac{k}{2}} M_{2k+1}^{-\frac{k}{2}} + \cdots + t^{\frac{k}{2}} M_{2k+1}^{\frac{k}{2}},\\
M_{2k+1}'=&  t^{-\frac{k}{2}+1} {M'}_{2k+1}^{-\frac{k}{2}+1} + \cdots + t^{\frac{k}{2}+1} {M'}_{2k+1}^{\frac{k}{2}+1}.
\end{align*}
Then, since $N_{2l+1}$ and $N'_{2l+1}$ are same as in the case of $m_0 \neq 0$, we have
\begin{align*} 
\left|
\begin{array}{cc}
M_{2k+1} & M_{2k+1}'\\
N_{2l+1} & N_{2l+1}'
\end{array}
\right| 
=&
\left|
\begin{array}{cc}
t^{-\frac{k}{2}} M_{2k+1}^{-\frac{k}{2}} + \cdots + t^{\frac{k}{2}} M_{2k+1}^{\frac{k}{2}} & t^{-\frac{k}{2}+1} {M'}_{2k+1}^{-\frac{k}{2}+1} + \cdots + t^{\frac{k}{2}+1} {M'}_{2k+1}^{\frac{k}{2}+1}\\
t^{-\frac{l}{2}} N_{2l+1}^{-\frac{l}{2}} + \cdots + t^{\frac{l}{2}} N_{2l+1}^{\frac{l}{2}} & t^{-\frac{l}{2}+1} {N'}_{2l+1}^{-\frac{l}{2}+1} + \cdots + t^{\frac{l}{2}+1} {N'}_{2l+1}^{\frac{l}{2}+1}
\end{array}
\right| \\
=& t^{-(k+l)+2} \left|
\begin{array}{cc}
M_{2k+1}^{-\frac{k}{2}} & {M'}_{2k+1}^{-\frac{k}{2}+1}\\
N_{2l+1}^{-\frac{l}{2}} & {N'}_{2l+1}^{-\frac{l}{2}+1}
\end{array}
\right| 
 + \cdots 
 + t^{k+l+2}  \left|
\begin{array}{cc}
M_{2k+1}^{\frac{k}{2}} & {M'}_{2k+1}^{\frac{k}{2}+1}\\
N_{2l+1}^{\frac{l}{2}} & {N'}_{2l+1}^{\frac{l}{2}+1}
\end{array}
\right| .
\end{align*} 
If we put
\begin{align*} 
M=& (-1)^{\frac{k}{2}}  \frac{|m_k m_{k-1}\cdots m_2|}{m_k m_{k-1} \cdots m_2}  X_{2k-1}\\
& \Bigl\{  \sum_{j}^{} (X_{2k-3} X_{2k-4})^j X_{2k-4}^{-1} \sum_{j}^{} (X_{2k-5}^{-1} X_{2k-6}^{-1})^j \Bigr\} \cdots 
\Bigl\{\sum_{j}^{} (X_{5} X_{4})^j  X_{4}^{-1}  \sum_{j}^{} (X_{3}^{-1} X_{2}^{-1})^j \Bigr\}\sum_{j}^{} (X_{1} X_{0})^j  X_{0}^{-1},\\
N=& (-1)^{\frac{l}{2}}  \frac{|n_l n_{l-1}\cdots n_2|}{n_l n_{l-1} \cdots n_2}  Y_{2l-1}\\
& \Bigl\{  \sum_{j}^{} (Y_{2l-3} Y_{2l-4})^j Y_{2l-4}^{-1} \sum_{j}^{} (Y_{2l-5}^{-1} Y_{2l-6}^{-1})^j \Bigr\} \cdots 
\Bigl\{\sum_{j}^{} (Y_{5} Y_{4})^j  Y_{4}^{-1}  \sum_{j}^{} (Y_{3}^{-1} Y_{2}^{-1})^j \Bigr\} \sum_{j}^{} (Y_{1} Y_{0})^j  Y_{0}^{-1},
\end{align*} 
then we have
\begin{align*}
\left|
\begin{array}{cc}
M_{2k+1}^{\frac{k}{2}} & {M'}_{2k+1}^{\frac{k}{2}+1}\\
N_{2l+1}^{\frac{l}{2}} & {N'}_{2l+1}^{\frac{l}{2}+1}
\end{array}
\right| 
=&
\left|
\begin{array}{cc}
-M (M_0^{-1} -X_1^{-1} R_1^0) & \frac{|m_1|}{m_1} M  \sum_{j}^{} (X_{-1}^{-1} X_{-2}^{-1})^j  \\
N S_0^{-1} & \frac{|n_1|}{n_1} N  \sum_{j}^{} (Y_{-1}^{-1} Y_{-2}^{-1})^j 
\end{array}
\right| \\
=&
\left|
\begin{array}{cc}
M & O \\
O & N
\end{array}
\right| 
\left|
\begin{array}{cc}
-(M_0^{-1} -X_1^{-1} R_1^0) & \frac{|m_1|}{m_1}\sum_{j}^{} (X_{-1}^{-1} X_{-2}^{-1})^j  \\
 S_0^{-1} & \frac{|n_1|}{n_1} \sum_{j}^{} (Y_{-1}^{-1} Y_{-2}^{-1})^j 
\end{array}
\right| \\
=&
|M| |N| 
\left|
\begin{array}{cc}
-(M_0^{-1} -X_1^{-1} R_1^0) & \frac{|m_1|}{m_1}\sum_{j}^{} (X_{-1}^{-1} X_{-2}^{-1})^j  \\
 S_0^{-1} & \frac{|n_1|}{n_1} \sum_{j}^{} (Y_{-1}^{-1} Y_{-2}^{-1})^j 
\end{array}
\right|.
\end{align*} 
Since we have
\begin{align*}
-(M_0^{-1} -X_1^{-1} R_1^0)=& 
\begin{cases}
\bigl\{(X_{-1}^{-1} X_{-2}^{-1})^{m_1+1}+ \frac{|m_1|}{m_1} \sum_{j}^{} (X_{-1}^{-1} X_{-2}^{-1})^j \bigr\} X_{-2} 
&  $if$ \ \beta_1 > 0,\\
\bigl\{-A^{-1} C + \frac{|m_1|}{m_1} \sum_{j}^{} (X_{-1}^{-1} X_{-2}^{-1})^j \bigr\} X_{-1}^{-1}  &  $if$ \ \beta_1 < 0,
\end{cases}\\
S_0^{-1}=& 
\begin{cases}
C^{-1} &  $if$ \ \beta_1 > 0,\\
A^{-1} &  $if$ \ \beta_1 < 0,
\end{cases}
\end{align*}
we obtain
\begin{align*}
& \left|
\begin{array}{cc}
-(M_0^{-1} -X_1^{-1} R_1^0) & \frac{|m_1|}{m_1}\sum_{j}^{} (X_{-1}^{-1} X_{-2}^{-1})^j  \\
 S_0^{-1} & \frac{|n_1|}{n_1} \sum_{j}^{} (Y_{-1}^{-1} Y_{-2}^{-1})^j 
\end{array}
\right|\\
=&
\begin{cases}
 \vspace{2mm}
\biggl| \frac{|m_1|}{m_1} \Bigl(\sum_j (B A)^j\Bigr) + \frac{|n_1|}{n_1}A^{-1} C \Bigl(\sum_j (B C)^j\Bigr) 
 + \frac{|m_1 n_1|}{m_1 n_1}\Bigl(\sum_j (B A)^j\Bigr) \Bigl(\sum_j (B C)^j\Bigr)BC \biggr| &  $if$ \ \beta_1 > 0,\\
\biggl| \frac{|m_1|}{m_1} \Bigl(\sum_j (B A)^j\Bigr) + \frac{|n_1|}{n_1}  A C^{-1} \Bigl(\sum_j (B C)^j\Bigr) 
 - \frac{|m_1 n_1|}{m_1 n_1}\Bigl(\sum_j (B A)^j\Bigr) \Bigl(\sum_j (B C)^j\Bigr) \biggr| &  $if$ \ \beta_1 < 0.
\end{cases}
\end{align*}
We also have
\begin{align*} 
|M| |N| = \prod^k_{i=2} \left| \sum^{|m_i|}_{j=1} (X_{2i-3}^{(-1)^i} X_{2i-4}^{(-1)^i})^j \right|
\prod^l_{i=2} \left| \sum^{|n_i|}_{j=1} (Y_{2i-3}^{(-1)^i} Y_{2i-4}^{(-1)^i})^j \right| .
\end{align*} 
Hence we obtain the formula of the statement.
Similarly, we can compute $\left|
\begin{array}{cc}
\hspace{-1mm} M_{2k+1}^{\min} & \hspace{-2mm} M_{2k+1}^{' \min} \hspace{-1mm} \\
\hspace{-1mm} N_{2l+1}^{\min} & \hspace{-2mm} N_{2l+1}^{' \min} \hspace{-1mm} 
\end{array}
\right|.$
\end{proof}

Then, we have
\begin{align*}
\displaystyle \Delta_{K,\rho}(t)
& = 
 \frac{\left|
\begin{array}{cc}
M_{2k+1} & M_{2k+1}'\\
N_{2l+1} & N_{2l+1}'
\end{array}
\right|}{t^2 - \mathrm{tr}  \rho(c) t +1}
\\
&\doteq 
  \begin{cases}
  \kappa_0 t^0 + \cdots  + \kappa_0 t^{2(k+l+1)} & $if$ \ m_0 \neq 0,\\
  \lambda_0 t^0 + \cdots + \lambda_0 t^{2(k+l)-2} & $if$ \ m_0 = 0.\\
  \end{cases}
\end{align*}
This completes the proof of Theorem \ref{the case 2}.

\subsection{Examples}
\begin{rem}
If we denote the Alexander polynomial of $K$ by $\Delta_K(t)$, then we have
\begin{eqnarray*}
\Delta_{K}(t)=
  \begin{cases}
 \kappa_0 t^0+ \cdots + \kappa_0 t^{k+l+2} & $if$ \ m_0 \neq 0,\\
 \lambda_0 t^0+ \cdots + \lambda_0 t^{k+l} & $if$ \ m_0 = 0,
   \end{cases}
\end{eqnarray*}
where
\begin{align*}
\kappa_0 &=
|m_k \cdots m_0||n_l \cdots n_1|,\\
\lambda_0 &=
\begin{cases}
|m_k \cdots m_2||n_l \cdots n_2||m_1+ n_1+ m_1 n_1| &  $if$ \ \beta_1 > 0,\\
|m_k \cdots m_2||n_l \cdots n_2||m_1+ n_1- m_1 n_1| &  $if$ \ \beta_1 < 0.
\end{cases}
\end{align*}
Then by \cite{HM}, it is known that the genus of $K$ is given by
\begin{eqnarray*}
2 g(K)=
  \begin{cases}
 k+l+2 & $if$ \ m_0 \neq 0,\\
 k+l & $if$ \ m_0 = 0.
   \end{cases}
\end{eqnarray*}
\end{rem}

It is known that
\begin{align*}
\deg(\Delta_{K}(t)) & \le  2 g(K),\\
\deg(\Delta_{K,\rho}(t)) & \le  4 g(K) -2,
\end{align*}
for any knot $K$.
Furthermore, if $K$ is fibered, then
\begin{align*}
\deg(\Delta_{K}(t)) & =  2 g(K),\\
\deg(\Delta_{K,\rho}(t)) & =  4 g(K) -2,
\end{align*}
 and both $\Delta_{K}(t)$ and $\Delta_{K, \rho}(t)$ are monic.

In the following examples,
we assume that $\beta_1 > 0$, $m_0= 0$ and
\[
|m_2| = \cdots = |m_k| = |n_2| = \cdots =|n_l|=1.
\]
Then their leading coefficients of their Alexander polynomials are 
\begin{align*}
\lambda_0 &=
|m_1+ n_1+ m_1 n_1|
\end{align*}

\begin{exa}
If $(m_1,n_1)=(-2,-2)$, then $\deg(\Delta_{K}(t))$ is less than $2 g(K)$.
On the other hand, the leading coefficient of $\Delta_{K, \rho}(t)$ is
\begin{eqnarray*}
\lambda_0 = |(BA)^{-1} (BC)^{-1} - E|
= 2 - \mathrm{tr}BABC.
\end{eqnarray*}
Thus, if there exists a representation $\rho$ which gives $\mathrm{tr}BABC \neq 2$, then
\[
\deg(\Delta_{K,\rho}(t)) = 4 g(K) -2.
\]
\end{exa}

\begin{exa}
If $(m_1,n_1)=(-2, -3)$,
then $\Delta_K(t)$ is monic but $K$ is not fibered (see \cite{HM}).
On the other hand, the leading coefficient of $\Delta_{K, \rho}(t)$ is
\begin{eqnarray*}
\lambda_0 = |(BA)^{-1} (BC)^{-1} - E|
= 1 - \mathrm{tr}(BABC-E) (BC+E).
\end{eqnarray*}
Thus, if there exists a representation $\rho$ which gives $\mathrm{tr}(BABC-E) (BC+E) \neq 0$, then $\Delta_{K, \rho}(t)$ can't be monic.
\end{exa}

\section{The case of (3) }

In this section, we give the presentation of knot groups of knots which satisfiy the condition (3) by using links whose surgery along the trivial component gives these knots. 
With the presentation, we compute their twisted Alexander polynomials associated to their $SL_2(\bC)$-representations.

In this case, we consider knots $K_n=M(0;(3n+2,-2n-1), (3,1), (3,1))$ depicted in Figure 7.

\begin{figure}[h]
  \begin{center}
\includegraphics[clip,width=4.5cm]{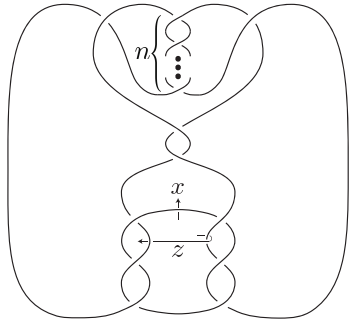}
\caption{The knot $K_n$}
  \end{center}
\end{figure}

For simplicity, we put
\begin{eqnarray*}
X=\rho(x), \  Z=\rho(z), \
W=\begin{cases}
\rho( x^{-1} [x, z][x^{-1},  z^{-1}] x) &n\ $is even$,\\
\rho( [z , x^{-1}][ z^{-1} , x] ) & n\ $is odd$.
\end{cases}
\end{eqnarray*}
If $n$ is even, we set 
\begin{eqnarray*}
A=
\begin{cases}
E & n> 0,\\
-W^{\frac{n}{2}} & n< 0,
\end{cases}
\end{eqnarray*}
and if $n$ is odd, we set
\begin{eqnarray*}
A=
\begin{cases}
E & n> -1,\\
-W^{\frac{n+1}{2}} & n< -1,
\end{cases}
\end{eqnarray*}
where $E$ denotes the identity matrix.

\begin{thm} \label{the case 3}
The twisted Alexander polynomial of $K_n=M(0;(3n+2,-2n-1), (3,1), (3,1))$ associated to their non-abelian
representation $\rho : G(K_n) \to SL_2(\bC)$ is given by
\begin{eqnarray*}
\Delta_{K_n,\rho}(t)
= 
  \begin{cases}
  \kappa_0 t^0 + \cdots   +\kappa_7 t^7  + \cdots + \kappa_0 t^{14} & n\ $is even$,\\
   \lambda_0 t^0 + \cdots +\lambda_3 t^3 +\cdots + \lambda_0 t^6 & n\ $is odd$,
  \end{cases}
\end{eqnarray*}
where $\kappa_i,\lambda_j \in \bC$ are given by the following:
\begin{align*}
\kappa_{0}=&  \Biggl|\sum^{|\frac{n}{2}|}_{i=1} W^i \Biggr|,
\kappa_{1}= -\sum^{|\frac{n}{2}|}_{i=1} tr A W^i X^{-1} [Z^{-1}, X^{-1}]- \Biggl|\sum^{|\frac{n}{2}|}_{i=1} W^i 
\Biggr| tr X,
\kappa_{2}= 1+\sum^{|\frac{n}{2}|}_{i=1} tr A W^i - \Biggl|\sum^{|\frac{n}{2}|}_{i=1} W^i \Biggr|,\\
\kappa_{3}=&  \Biggl|\sum^{|\frac{n}{2}|}_{i=1} W^i \Biggr| \bigl\{ tr XZ + tr W X Z X Z^{-1} W^{-\frac{n}{2}} Z X Z^{-1} W^{\frac{n}{2}} \bigr\},\\
\kappa_{4}=& -\sum^{|\frac{n}{2}|}_{i=1} \bigl\{ tr XZ tr A W^{i-1} Z X^{-1} Z^{-1} + tr A W^i X Z X Z^{-1} \bigr\}  
-\Biggl|\sum^{|\frac{n}{2}|}_{i=1} W^i \Biggr| \bigl\{ tr Z +2tr X^2 Z + tr X Z X Z^{-1}  \bigr\},\\
\kappa_{5}=& tr XZ + \sum^{|\frac{n}{2}|}_{i=1} \bigl\{ tr XZ tr A W^{i} - tr X tr A W^{i-\frac{n}{2}} X^{-1} Z^{-1} X^{-1} + 2 tr A W^{i+1} X^{-1} Z^{-1} X^{-1} W^{-\frac{n}{2}} Z X Z^{-1} \bigr\}\\
&+ \Biggl|\sum^{|\frac{n}{2}|}_{i=1} W^i \Biggr| \bigl\{ tr XZ + tr W^2 X^{-1} Z^{-1} X^{-1} W^{-\frac{n}{2}} Z X Z^{-1} W^{\frac{n}{2}} \bigr\},\\
\kappa_{6}=& tr W^{\frac{n}{2}} XZX+ \sum^{|\frac{n}{2}|}_{i=1} \bigl\{tr A W^{i-\frac{n}{2}} (X^{-1} Z^{-1})^2 -tr A W^{i} (X^{-1} Z^{-1})^2 +tr A W^{i-\frac{n}{2}} X^{-1} Z^{-1} X^{-1}   \bigr\}\\
&+ \Biggl|\sum^{|\frac{n}{2}|}_{i=1} W^i \Biggr| \bigl\{ 2 + tr XZ  tr W X Z X Z^{-1} W^{-\frac{n}{2}} Z X Z^{-1} W^{\frac{n}{2}} \bigr\},\\
\kappa_{7}=&  -\sum^{|\frac{n}{2}|}_{i=1} \bigl\{ tr XZ (tr A W^{i-1} Z + tr A W^{i} X Z X Z^{-1}) + tr A W^{i-1}  Z X^2 Z X -tr A W^{i} X^{-1} Z^2 \bigr\}\\
&- \Biggl|\sum^{|\frac{n}{2}|}_{i=1} W^i \Biggr| \bigl\{ tr X +tr XZ(tr X^2 Z + tr X Z X Z^{-1}) + tr W Z^{-1} W^{-\frac{n}{2}} Z X Z^{-1} W^{\frac{n}{2}} \bigr\},
\end{align*}
and
\begin{align*}
\lambda_{0}=& \Biggl| \sum^{|\frac{n+1}{2}|}_{i=1} W^i \Biggr|,\\
\lambda_{1}=& \sum^{|\frac{n+1}{2}|}_{i=1} tr A W^{i-1} Z X^{-1} Z^{-1} + \Biggl| \sum^{|\frac{n+1}{2}|}_{i=1} W^i \Biggr| \biggl\{ tr X Z^{-1} -tr X + tr W X^2 Z^{-1} W^{\frac{n+1}{2}} Z X Z^{-1} W^{-\frac{n+1}{2}} \biggr\},\\
\lambda_{2}=& 1+ \sum^{|\frac{n+1}{2}|}_{i=1}  \bigl\{ tr A W^{i} X^2 Z^{-1} -tr A W^{i-1} +tr A W^{i-1} Z X^{-2} \bigr\}\\
& + \Biggl| \sum^{|\frac{n+1}{2}|}_{i=1} W^i \Biggr| \bigl\{ 3 -tr X tr W X^2 Z^{-1} W^{\frac{n+1}{2}} Z X Z^{-1} W^{-\frac{n+1}{2}} \\
& \hspace{50mm}+tr X Z^{-1} (tr W X^2 Z^{-1} W^{\frac{n+1}{2}} Z X Z^{-1} W^{-\frac{n+1}{2}} -tr X) \bigr\},\\
\lambda_{3}=& tr X Z^{-1} -trX\\
&+ \sum^{|\frac{n+1}{2}|}_{i=1} \biggl\{ tr A W^{i-1}(Z X Z^{-1} )^{-1} W (X Z X^{-1} )^{-1}  
-tr A W^{i-\frac{n+1}{2}}  (X Z X^{-1}) W^{\frac{n+1}{2}-1} (Z X Z^{-1})^{-1} \\
& \hspace{15mm} -tr A W^i (X Z X^{-1})^{-1}(Z X Z^{-1})
+tr A W^{i-\frac{n+1}{2}} (X Z X^{-1})^{-1} W^{\frac{n+1}{2}} (Z X Z^{-1}) \\
& \hspace{80mm}+ ( tr X Z^{-1} -trX)tr A W^i X^2 Z^{-1} \biggr\}\\
&+ \Biggl| \sum^{|\frac{n+1}{2}|}_{i=1} W^i \Biggr| \biggl\{ 2(tr X Z^{-1} -tr X + tr W X^2 Z^{-1} W^{\frac{n+1}{2}} Z X Z^{-1} W^{-\frac{n+1}{2}} )\\
& \hspace{50mm}- tr X Z^{-1} tr X  tr W X^2 Z^{-1} W^{\frac{n+1}{2}} Z X Z^{-1} W^{-\frac{n+1}{2}}  \biggr\}.
\end{align*}
\end{thm}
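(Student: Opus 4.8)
The plan is to mirror the strategy of cases (1) and (2): produce a deficiency-one presentation of $G(K_n)$, identify $\Delta_{K_n,\rho}(t)$ with a single Fox-derivative determinant divided by the meridian factor, and then extract every coefficient by a graded expansion. First I would use the surgery description announced for this section: $K_n$ arises from a fixed two-component link $K\cup U$, with $U$ an unknot, by $\pm 1/n$-surgery on $U$. Hence the exterior of $K_n$ is the Dehn filling of the link exterior along $U$, so $G(K_n)=\pi_1(S^3-(K\cup U))/\langle\langle \mu_U\lambda_U^{\,n}\rangle\rangle$. Eliminating the meridian of $U$ via the surgery relation leaves a two-generator one-relator presentation $\langle x,z\mid r_n\rangle$ in which the surgery word appears as a power $W^{\pm p}$, where $p=|n/2|$ (resp. $|(n+1)/2|$) for $n$ even (resp. odd) and $W$ is the longitude-type commutator word $x^{-1}[x,z][x^{-1},z^{-1}]x$ (resp. $[z,x^{-1}][z^{-1},x]$) fixed in the notation. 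As in Propositions \ref{TAP of 2-bridge knot} and \ref{TAP of the case 2}, this gives
\[
\Delta_{K_n,\rho}(t)=\frac{\det\Phi\!\left(\tfrac{\partial r_n}{\partial z}\right)}{\det\Phi(x-1)},\qquad \det\Phi(x-1)=t^2-\mathrm{tr}(X)\,t+1 .
\]

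The key structural observation is that $W$ is a product of commutators, hence null-homologous, so $\mathfrak{a}(W)=1$ and $W^{\,n}$ carries $t$-degree $0$ for every $n$. Consequently the $t$-span of $\det\Phi(\partial r_n/\partial z)$ is controlled only by the fixed, $n$-independent part of $r_n$; after dividing by the meridian factor this yields the bounded degrees $14$ (for $n$ even) and $6$ (for $n$ odd). The $n$-dependence is instead pushed entirely into the matrix coefficients: Fox-differentiating the surgery word produces $\big(\sum_{i=0}^{p-1}W^i\big)\tfrac{\partial W}{\partial z}$, and rewriting this through the auxiliary matrix $A$ — which absorbs the sign of $n$ and the initial offset in the four branches $n\gtrless0$, $n$ even/odd — converts all of the surgery data into the power sums $\sum_{i=1}^{|n/2|}W^i$ and $\sum_{i=1}^{|(n+1)/2|}W^i$ that occur in every $\kappa_i$ and $\lambda_j$.

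I would then expand $\Phi(\partial r_n/\partial z)$ as a Laurent polynomial in $t$ with $2\times2$ matrix coefficients, form its determinant, and collect the coefficient of each power of $t$. Each such coefficient is the determinant (or a trace) of a sum of products of $X,Z,W,A$; I would reduce these to the trace monomials in the statement using the $SL_2(\bC)$ identities $\det P=1$, the Cayley--Hamilton relation $P^2=\mathrm{tr}(P)P-E$, and $\mathrm{tr}(PQ)+\mathrm{tr}(PQ^{-1})=\mathrm{tr}(P)\,\mathrm{tr}(Q)$. The work is halved by the standard duality $\Delta_{K_n,\rho}(t^{-1})\doteq\Delta_{K_n,\rho}(t)$ valid for $SL_2(\bC)$-representations, which forces the palindromic shape displayed in the theorem; thus it suffices to compute $\kappa_0,\dots,\kappa_7$ (resp. $\lambda_0,\dots,\lambda_3$) and read off the top half by symmetry. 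As an internal check, $\kappa_0=\big|\sum_{i=1}^{|n/2|}W^i\big|$ should agree with the leading-coefficient computation of cases (1)--(2).

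The main obstacle is the coefficient bookkeeping. Because each matrix entry is itself a $t$-graded sum indexed by the power-sum variable $i$, determining precisely which products of $X,Z,W,A$ land in a given $t$-degree after expanding a $2\times2$ determinant, and then normalizing the resulting sums of traces into the exact monomials listed, is where essentially all of the difficulty lies; the middle coefficients $\kappa_7$ and $\lambda_3$ are the worst, receiving contributions from the largest number of terms. A secondary difficulty is carrying the four sign/parity branches uniformly through the single matrix $A$ without sign errors, and confirming that the extreme cases $n\gtrless0$ recombine into the stated closed forms.
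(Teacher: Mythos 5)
Your proposal follows the paper's own strategy essentially verbatim: the paper likewise realizes $K_n$ by $\mp 1/p$-surgery on the trivial component of a two-component link ($L_e$ for $n$ even, $L_o$ for $n$ odd), eliminates the filled meridian so that the surgery data enters a two-generator one-relator presentation only through powers of the null-homologous commutator word $W$ (hence as the power sums $\sum_i W^i$ in degree zero), expands the Fox-derivative matrix as a Laurent polynomial in $t$, computes its determinant coefficientwise via $2\times 2$ cofactor/trace identities, and divides by the generator factor to read off the $\kappa_i$ and $\lambda_j$. The only cosmetic differences are that the paper differentiates with respect to $x$ and divides by $\Phi(z-1)=t^4-t^2\,\mathrm{tr}Z+1$ (rather than your $\partial/\partial z$ over $\Phi(x-1)$, which is equivalent by Wada's well-definedness up to units), and it obtains the palindromic shape from the direct computation of all coefficients instead of invoking $SL_2(\bC)$ duality to halve the work.
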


\subsection{The case when $n$ is even}
In this case, knots $K_n$ are obtained by $-\frac{1}{n/2}$-surgery along the trivial component of the link $L_e$ depicted in Figure 8.

\begin{figure}[h]
  \begin{center}
\includegraphics[clip,width=4.5cm]{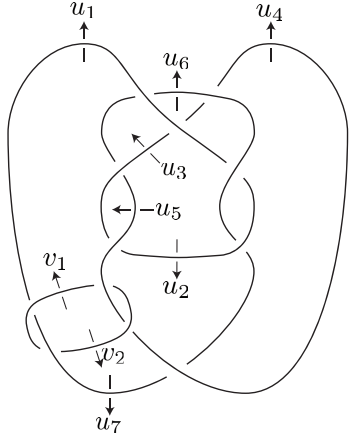}
\caption{The link $L_e$}
  \end{center}
\end{figure}

Then, we obtain 9 relations of the link $L_e$:
\begin{eqnarray}
u_1 u_6 u_2^{-1} u_6^{-1}=1  &\Leftrightarrow& u_2 = u_6^{-1} u_1 u_6\\
u_3 u_5^{-1} u_2^{-1} u_5=1 &&\\
u_3 u_1 u_6 u_4^{-1} u_6^{-1} u_1^{-1}=1 &\Leftrightarrow& u_1 = (u_3 u_1) u_6 u_4^{-1} u_6^{-1}\\
v_2 u_4^{-1} v_2^{-1} u_5=1 &\Leftrightarrow& u_4 = v_2^{-1} u_5 v_2\\
v_1 u_5^{-1} v_2^{-1} u_5=1 &\Leftrightarrow& v_2 = u_5 v_1u_5^{-1} \\
u_3 u_1 u_6 u_1^{-1} u_3^{-1} u_5^{-1} =1 &\Leftrightarrow& u_5 = (u_3 u_1) u_6  (u_3 u_1)^{-1} \\
u_6 u_2 u_4 u_7^{-1} u_4^{-1} u_2^{-1}=1 &\Leftrightarrow& u_7 = (u_2 u_4)^{-1} u_6 u_2 u_4\\
v_1 u_7^{-1} v_2^{-1} u_7=1 &&\\
v_1 u_1 v_1^{-1} u_7^{-1}=1 &&
\end{eqnarray}
From the relations (1), (3), (4), (5), (6) and (7), we can write 
\begin{align*}
u_1 =& (u_3 u_1) u_6 (u_3 u_1) u_6  (u_3 u_1)^{-1} v_1^{-1} (u_3 u_1) u_6^{-1}  (u_3 u_1)^{-1} v_1 (u_3 u_1) u_6^{-1}  (u_3 u_1)^{-1} u_6^{-1},\\
u_2 =& u_6^{-1} (u_3 u_1) u_6 (u_3 u_1) u_6  (u_3 u_1)^{-1} v_1^{-1} (u_3 u_1) u_6^{-1}  (u_3 u_1)^{-1} v_1 (u_3 u_1) u_6^{-1}  (u_3 u_1)^{-1},\\
u_3 =& (u_3 u_1) u_6 (u_3 u_1) u_6  (u_3 u_1)^{-1} v_1^{-1} (u_3 u_1) u_6  (u_3 u_1)^{-1} v_1 (u_3 u_1) u_6^{-1}  (u_3 u_1)^{-1} u_6^{-1} (u_3 u_1)^{-1} ,\\ 
u_4 =& (u_3 u_1) u_6  (u_3 u_1)^{-1} v_1^{-1} (u_3 u_1) u_6  (u_3 u_1)^{-1} v_1 (u_3 u_1) u_6^{-1}  (u_3 u_1)^{-1},\\
u_5 =& (u_3 u_1) u_6  (u_3 u_1)^{-1}, \\
u_7 =& u_6^{-1} (u_3 u_1)^{-1} u_6 (u_3 u_1) u_6, \\
v_2 =& (u_3 u_1) u_6  (u_3 u_1)^{-1} v_1 (u_3 u_1) u_6^{-1}  (u_3 u_1)^{-1},
\end{align*}
with generators $u_3 u_1 , u_6$ and $v_1$.
Since we can obtain relation (8) from relations (2) and (9), we can give a presentation $\pi_1(S^3 - L_e)$ by
\[
\langle u_6, u_3 u_1, v_1  \ | \  r_1, r_2 \rangle.
\]
where
\begin{align*}
r_1=& \bigl( (u_3 u_1) u_6 \bigr)^2 (u_3 u_1)^{-1} v_1^{-1} (u_3 u_1) u_6  (u_3 u_1)^{-1} v_1 (u_3 u_1) \bigl( (u_3 u_1) u_6 \bigr)^{-2}\\
& v_1^{-1} (u_3 u_1) u_6 (u_3 u_1)^{-1} v_1 (u_3 u_1) \bigl( (u_3 u_1) u_6 \bigr)^{-2} u_6 (u_3 u_1) u_6  (u_3 u_1)^{-1},\\
r_2=& v_1  \bigl( (u_3 u_1) u_6 \bigr)^2 (u_3 u_1)^{-1} v_1^{-1} (u_3 u_1) u_6^{-1}  (u_3 u_1)^{-1} v_1 (u_3 u_1) u_6^{-1}  (u_3 u_1)^{-1} u_6^{-1} v_1^{-1} u_6^{-1} (u_3 u_1)^{-1} u_6^{-1} (u_3 u_1) u_6.
\end{align*}
Since we can get $K_n$ from $L_e$ by $-1/\frac{n}{2}$-surgery,
we can obtain the presentation of the knot group $G(K_n)$ by using a presentation of $\pi_1(S^3 - L_e)$, i.e. we have
\begin{align*}
v_1 &= (u_5^{-1} u_7)^{\frac{n}{2}}\\
&= \bigl\{(u_3 u_1) u_6^{-1} (u_3 u_1)^{-1} u_6^{-1} (u_3 u_1)^{-1} u_6 (u_3 u_1) u_6 \bigr\}^{\frac{n}{2}}\\
&= u_6^{-1} \bigl\{ [u_6, (u_3 u_1)] [u_6^{-1}, (u_3 u_1)^{-1}] \bigr\}^{\frac{n}{2}} u_6 .
\end{align*}
Then, we can reduce $r_1$ and get
\begin{align*}
G(K_n)=
\langle x , z  \ | \  [x^{-1}, z^{-1}] x y x (z x z^{-1}) y^{-1}
= y z x (z x z^{-1}) y^{-1} (z x z^{-1})^{-1} \rangle,
\end{align*}
where $x=u_6, z = u_3 u_1$ and $y=x^{-1} ([x, z][x^{-1}, z^{-1}])^{\frac{n}{2}} x$.
By using this presentation, we get
\begin{align*}
\frac{\partial r}{\partial x}  =& \ 
x^{-1} z^{-1} - x^{-1} + [x^{-1},z^{-1}] +  [x^{-1}, z^{-1}] x y  - y z
+ [x^{-1}, z^{-1}] x y x z -y z x z + y z x (z x z^{-1}) y^{-1} z x^{-1}\\
& + \bigl\{-1 + [x^{-1}, z^{-1}] x -  [x^{-1}, z^{-1}] x y x (z x z^{-1}) y^{-1} + y z x (z x z^{-1}) y^{-1} \bigr\} \frac{\partial y}{\partial x} .
\end{align*}

We now compute $(\rho \otimes \frak{a}) \circ \phi \left(\frac{\partial r}{\partial x} \right)$.
Let $\rho$ be a $SL_2(\bC)$-representation of $G(K_n)$.
Then we put 
\begin{align*}
X=\rho(x),\  Y=\rho(y), \  Z=\rho(z), \
W=\rho(x^{-1} [x, z][x^{-1},  z^{-1}] x). 
\end{align*}
Since $\alpha(x)=t , \alpha(z)=t^2$, we can write
\begin{eqnarray*}
\Phi \left(\frac{\partial y}{\partial x} \right) =A B (t^{-4} X^{-1} Z^{-1} X^{-1} - t^{-2} X^{-1} Z^{-1} X^{-1} Z
 + t^{-1} X^{-1} -t X^{-1} [Z^{-1}, X^{-1}]Z ), 
\end{eqnarray*}
where
\[
A=
\begin{cases}
E & n<0\\
-Y & n>0
\end{cases}\ ,\ 
B=\sum_{i=1}^{|n/2|} W^i.
\]

\begin{prop}
We have
\begin{eqnarray*}
\Phi \left(\frac{\partial r}{\partial x} \right) 
 = t^{-4} M + t^{-3} N + t^{-2} O + t^{-1} P + t^0 Q 
 + t^1 R + t^2 S + t^3 T + t^4 U + t^5 V,
\end{eqnarray*}
where
\begin{align*}
M =& A B X^{-1} Z^{-1} X^{-1},\\
N =& X^{-1} Z^{-1} + [X^{-1}, Z^{-1}] X A B X^{-1} Z^{-1} X^{-1}, \\
O =& A B X^{-1} Z^{-1} X^{-1} Z,\\
P =& - X^{-1}- A B X^{-1}-  [X^{-1}, Z^{-1}] X A B X^{-1} Z^{-1} X^{-1} Z
 - [ X^{-1}, Z^{-1}] X Y X (Z X Z^{-1}) Y^{-1} A B X^{-1} Z^{-1} X^{-1},\\
Q =& [ X^{-1}, Z^{-1}] + [ X^{-1}, Z^{-1}] X A B X^{-1}+ Y Z X (Z X Z^{-1}) Y^{-1} A B X^{-1} Z^{-1} X^{-1},\\
R =& [X^{-1}, Z^{-1}] X Y + A B X^{-1} Z^{-1} X^{-1} Z X Z
 + [X^{-1}, Z^{-1}] X Y X (Z X Z^{-1}) Y^{-1} A B X^{-1} Z^{-1} X^{-1} Z,\\
S =& - Y Z  - Y Z X (Z X Z^{-1}) Y^{-1} A B X^{-1} Z^{-1} X^{-1} Z
- [ X^{-1}, Z^{-1}] X A B X^{-1} [ Z^{-1}, X^{-1}] Z \\
&- [ X^{-1}, Z^{-1}] X Y X (Z X Z^{-1}) Y^{-1} A B X^{-1},\\
T =& Y Z X (Z X Z^{-1}) Y^{-1} A B X^{-1},\\
U =& [ X^{-1}, Z^{-1}] X Y X Z  + [ X^{-1}, Z^{-1}] X Y X (Z X Z^{-1}) Y^{-1} A B X^{-1} [ Z^{-1}, X^{-1}] Z,\\
V =& -Y Z X Z + Y Z X (Z X Z^{-1}) Y^{-1} Z X^{-1}
 - Y Z X (Z X Z^{-1}) Y^{-1} A B X^{-1} Z^{-1} X^{-1} Z X Z.\\
\end{align*}
\end{prop}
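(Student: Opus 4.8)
The plan is to feed the explicit word-level expansion of $\frac{\partial r}{\partial x}$ obtained just above into the ring homomorphism $\Phi=(\rho\otimes\frak{a})\circ\phi$ and to collect the result by powers of $t$. Two facts drive everything. First, $\Phi$ sends a single group word $w$ to $\rho(w)\,t^{\deg w}$, where $\deg$ is the abelianization weight fixed by $\frak{a}(x)=t$ and $\frak{a}(z)=t^2$. Second, the subword $y=x^{-1}([x,z][x^{-1},z^{-1}])^{n/2}x$ is a conjugate of a product of commutators and hence has abelianization weight $0$, so inserting or deleting copies of $y$ and $y^{-1}$ never shifts the $t$-degree of a term. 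I will also use the already-computed value of $\Phi\!\left(\frac{\partial y}{\partial x}\right)$, a matrix Laurent polynomial supported only in degrees $-4,-2,-1,1$.

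First I would split $\frac{\partial r}{\partial x}$ into its \emph{direct} part (the terms not containing the factor $\frac{\partial y}{\partial x}$) and the single \emph{composite} term $\{\,-1+[x^{-1},z^{-1}]x-[x^{-1},z^{-1}]xyx(zxz^{-1})y^{-1}+yzx(zxz^{-1})y^{-1}\,\}\,\frac{\partial y}{\partial x}$. Each monomial of the direct part is a single group word, so under $\Phi$ it produces exactly one matrix times one power of $t$: computing its weight places $x^{-1}z^{-1}$ in degree $-3$, $-x^{-1}$ in degree $-1$, $[x^{-1},z^{-1}]$ in degree $0$, $[x^{-1},z^{-1}]xy$ in degree $1$, $-yz$ in degree $2$, $[x^{-1},z^{-1}]xyxz$ in degree $4$, and the two length-heavy words $-yzxz$ and $yzx(zxz^{-1})y^{-1}zx^{-1}$ in degree $5$. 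These supply the ``bare'' summands of $N,P,Q,R,S,U$ and $V$ respectively.

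Next I would treat the composite term as a convolution: its prefactor has four monomials of weights $0,1,3,4$, and $\Phi\!\left(\frac{\partial y}{\partial x}\right)$ has four homogeneous pieces in degrees $-4,-2,-1,1$, so multiplying them out distributes sixteen matrix products across the degrees $-4$ through $5$. Matching weights, the degree-$(-4)$ entry comes only from the weight-$0$ prefactor monomial times the degree-$(-4)$ piece, giving $M$; similarly $O$ and $T$ arise purely from the composite term, while the remaining products fall into degrees $-3,\dots,5$ and, added to the bare summands above, assemble $N,P,Q,R,S,U,V$. At this stage the matrix products are simplified to the stated closed form using $\rho(([x,z][x^{-1},z^{-1}])^{i})=XW^{i}X^{-1}$, $Y=W^{n/2}$, and the abbreviations $A,B$, taking into account the sign and index change in $\Phi\!\left(\frac{\partial y}{\partial x}\right)$ that depends on whether $n>0$ or $n<0$.

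Collecting by powers of $t$ then yields the asserted expansion from $t^{-4}M$ to $t^{5}V$, with the extreme degrees arising in a controlled way: the minimum $t^{-4}$ only from $-1$ times the lowest piece of $\Phi\!\left(\frac{\partial y}{\partial x}\right)$, and the maximum $t^{5}$ only from the two degree-$5$ direct words together with the weight-$4$ prefactor monomial times the degree-$1$ piece. The computation is entirely mechanical, and the main obstacle is purely organizational: one must assign the correct abelianization weight to every monomial (the key observation being that $y$ is weight-neutral), keep the sixteen convolution products and the seven direct monomials sorted into the right degree, and carry the signs from the Fox derivatives of the inverse letters and from the case distinction in $A$ without error.
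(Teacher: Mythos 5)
Your proposal is correct and is essentially the paper's own argument: the paper states this proposition as the direct outcome of applying $\Phi$ to the displayed Fox derivative and collecting powers of $t$, which is exactly the direct-part-plus-convolution bookkeeping you describe, with all of your degree assignments (direct words in degrees $-3,-1,0,1,2,4,5,5$; prefactor weights $0,1,3,4$ against the pieces of $\Phi\bigl(\tfrac{\partial y}{\partial x}\bigr)$ in degrees $-4,-2,-1,1$) checking out. One caveat worth recording: carried out literally, the weight-$0$ prefactor monomial is $-1$, so the $t^{-4}$ coefficient comes out as $-ABX^{-1}Z^{-1}X^{-1}$, opposite in sign to the printed $M$, while all fifteen other convolution products and all direct terms agree with the stated coefficients; this is a sign slip in the statement rather than in your method, and it is invisible in the leading coefficient $\kappa_0=|M|$ since the determinant of a $2\times 2$ matrix is unchanged under negation.
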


Then, the following proposition follows from the relation;
\[
|A+B|=|A| + |B| + \mathrm{tr} A B^{*},
\]
where $A, B \in M_2(\bC)$ and $B^{*}$ is the cofactor matrix of $B$.

\begin{prop}
\begin{align*}
\left| \Phi \left(\frac{\partial r}{\partial x} \right) \right| &=\det (t^{-4} M + t^{-3} N + t^{-2} O + t^{-1} P + t^0 Q + t^1 R + t^2 S + t^3 T + t^4 U + t^5 V)\\
&= \sum_{i=-8}^{10} t^i k_i,
\end{align*}
where
\begin{align*}
k_{-8} =& |M|,\\ 
k_{-7} =& \mathrm{tr} M N^{*} ,\\ 
k_{-6} =& |N| + \mathrm{tr} M O^{*}, \\
k_{-5} =& \mathrm{tr} M P^{*} + \mathrm{tr} N O^{*} ,\\ 
k_{-4} =& |O| + \mathrm{tr} M Q^{*} + \mathrm{tr} N P^{*} ,\\
k_{-3} =& \mathrm{tr} M R^{*} + \mathrm{tr} N Q^{*} + \mathrm{tr} O P^{*},\\
k_{-2} =& |P| +\mathrm{tr} M S^{*} + \mathrm{tr} N R^{*} + \mathrm{tr} O Q^{*},\\
k_{-1} =& \mathrm{tr} M T^{*} + \mathrm{tr} N S^{*} + \mathrm{tr} O R^{*} + \mathrm{tr} P Q^{*}, \\
k_{0} =& |Q| + \mathrm{tr} M U^{*} + \mathrm{tr} N T^{*} + \mathrm{tr} O S^{*} + \mathrm{tr} P R^{*},\\
k_{1} =& \mathrm{tr} M V^{*} + \mathrm{tr} N U^{*} + \mathrm{tr} O T^{*} + \mathrm{tr} P S^{*} + \mathrm{tr} Q R^{*},\\
k_{2} =& |R| + \mathrm{tr} N V^{*} + \mathrm{tr} O U^{*} + \mathrm{tr} P T^{*} + \mathrm{tr} Q S^{*},\\
k_{3} =& \mathrm{tr} O V^{*} + \mathrm{tr} P U^{*} + \mathrm{tr} Q T^{*} + \mathrm{tr} R S^{*},\\
k_{4} =& |S| + \mathrm{tr} P V^{*} + \mathrm{tr} Q U^{*} + \mathrm{tr} R T^{*},\\
k_{5} =& \mathrm{tr} Q V^{*} + \mathrm{tr} R U^{*} + \mathrm{tr} S T^{*}, \\
k_{6} =& |T| + \mathrm{tr} R V^{*} + \mathrm{tr} S U^{*},\\
k_{7} =& \mathrm{tr} S V^{*} + \mathrm{tr} T U^{*}, \\ 
k_{8} =& |U| + \mathrm{tr} T V^{*}, \\ 
k_{9} =& \mathrm{tr} U V^{*} ,\\
k_{10} =& |V|.
\end{align*}
\end{prop}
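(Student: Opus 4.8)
The plan is to regard $\Phi\!\left(\frac{\partial r}{\partial x}\right)$ as a single $2\times 2$ matrix over $\bC$ whose entries are Laurent polynomials in $t$, and to expand its determinant purely formally from the quadratic structure of the $2\times 2$ determinant recorded just before the statement. Writing the matrix as $\sum_{a} t^{e_a} C_a$, where $C_a$ ranges over the ten constant matrices $M,N,O,P,Q,R,S,T,U,V$ with respective exponents $e_a\in\{-4,-3,\dots,5\}$, the first observation is that the identity $|A+B|=|A|+|B|+\mathrm{tr}\,AB^{*}$ is symmetric in $A$ and $B$, so $\mathrm{tr}\,AB^{*}=\mathrm{tr}\,BA^{*}$; iterating the two-term identity across the ten summands therefore makes each unordered pair $\{a,b\}$ contribute its mixed term exactly once.

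Carrying out this iteration yields
\[
\left|\Phi\!\left(\frac{\partial r}{\partial x}\right)\right|
=\sum_{a} t^{2e_a}\,|C_a|+\sum_{a<b} t^{e_a+e_b}\,\mathrm{tr}\,C_a C_b^{*}.
\]
Here I would flag the one point that is not purely formal: in dimension two the cofactor (adjugate) scales \emph{linearly}, $(t^{e}C)^{*}=t^{e}C^{*}$, so each mixed term really carries the weight $t^{e_a+e_b}$ rather than $t^{e_a+2e_b}$; combined with $|t^{e}C|=t^{2e}|C|$ for the self-terms, this is exactly what makes the $t$-exponent of every contribution equal to the sum of the two exponents involved. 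This homogeneity is precisely the statement that $|\cdot|$ is a degree-two form whose polarization is the bilinear ``mixed determinant'' $\mathrm{tr}\,C_aC_b^{*}$.

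It then remains to collect the coefficient of each power $t^{i}$ for $-8\le i\le 10$. For fixed $i$ the self-term $|C_a|$ contributes exactly when $2e_a=i$ (hence only for even $i$), and the cross-term $\mathrm{tr}\,C_aC_b^{*}$ contributes exactly when $e_a+e_b=i$ with $a<b$. Reading off the exponents $(-4,-3,\dots,5)$ attached to $(M,N,\dots,V)$, each $k_i$ is the sum over these admissible pairs; for instance $k_{-8}=|M|$ comes only from $2\cdot(-4)$, $k_{-7}=\mathrm{tr}\,MN^{*}$ only from $(-4)+(-3)$, $k_{-6}=|N|+\mathrm{tr}\,MO^{*}$ from $2\cdot(-3)$ and $(-4)+(-2)$, and so on up to $k_{10}=|V|$. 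Running this enumeration through all nineteen values of $i$ reproduces the listed formulas term for term.

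The main obstacle is not conceptual but organizational: the determinant expansion is immediate, and the entire content is the careful tabulation, for each $i$, of which single index satisfies $2e_a=i$ and which of the $\binom{10}{2}=45$ unordered pairs satisfy $e_a+e_b=i$. The chief risk is a bookkeeping slip, so I would cross-check the result by confirming that every pair $(C_a,C_b)$ appears in exactly one $k_i$ and every $|C_a|$ in exactly one $k_i$, so that the displayed coefficients account for all $10$ self-terms and all $45$ cross-terms with none repeated or omitted.
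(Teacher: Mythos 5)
Your proof is correct and takes essentially the same route as the paper, which likewise obtains the expansion directly from the polarization identity $|A+B|=|A|+|B|+\mathrm{tr}\,AB^{*}$ for $2\times 2$ matrices and then collects the coefficient of each power of $t$. Your explicit remarks on the symmetry $\mathrm{tr}\,AB^{*}=\mathrm{tr}\,BA^{*}$ and on the linearity $(t^{e}C)^{*}=t^{e}C^{*}$ of the adjugate in dimension two simply make precise the bookkeeping that the paper leaves implicit.
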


Since we have
\begin{eqnarray*}
\left| \Phi ( z - 1 ) \right| = \left| t^2 Z - E \right| 
= t^0 -  t^2 \mathrm{tr}Z + t^4,
\end{eqnarray*}
we obtain
\begin{align*}
\Delta_{K_n, \rho} (t)&=
 \frac{\left| \displaystyle \Phi \left(\frac{\partial r}{\partial x} \right) \right|}{\left| \Phi ( z - 1 ) \right|} \\
&= \frac{\displaystyle \sum_{i=-8}^{10} t^i k_i}{t^0 -  t^2 \mathrm{tr}Z + t^4}\\
& \sim  \sum_{i=0}^{6} \kappa_i (t^i + t^{14-i})  + \kappa_7 t^7 ,
\end{align*}
where
\begin{align*}
\kappa_0 =& k_{-8} = k_{10},\\
\kappa_1 =& k_{-7} = k_{9}, \\
\kappa_2 =& k_{-6} + \mathrm{tr}Z \kappa_0 = k_{8} + \mathrm{tr}Z \kappa_0,\\
\kappa_3 =& k_{-5} + \mathrm{tr}Z \kappa_1 = k_{7} + \mathrm{tr}Z \kappa_1,\\
\kappa_4 =& k_{-4} - \kappa_0 + \mathrm{tr}Z \kappa_2 = k_{6} - \kappa_0 + \mathrm{tr}Z \kappa_2,\\
\kappa_5 =& k_{-3} - \kappa_1 + \mathrm{tr}Z \kappa_3 = k_{5} - \kappa_1 + \mathrm{tr}Z \kappa_3,\\
\kappa_6 =& k_{-2} - \kappa_2 + \mathrm{tr}Z \kappa_4 = k_{4} - \kappa_2 + \mathrm{tr}Z \kappa_4,\\
\kappa_7 =& k_{-1} - \kappa_3 + \mathrm{tr}Z \kappa_5 = k_{3} - \kappa_3 + \mathrm{tr}Z \kappa_5.
\end{align*}
This proves the even case of Theorem \ref{the case 3}.

\subsection{The case when $n$ is odd}
In this case, knots $K_n$ are obtained by $-\frac{1}{(n+1)/2}$-surgery along the trivial component of the link $L_o$ depicted in Figure 9.

\begin{figure}[h]
  \begin{center}
\includegraphics[clip,width=4.5cm]{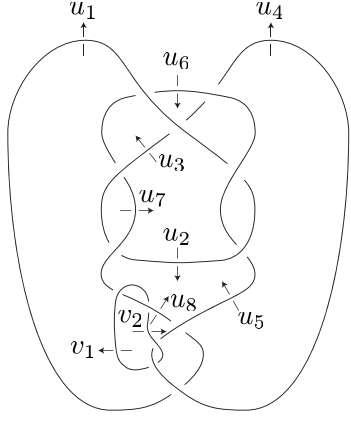}
\caption{The link $L_o$}
  \end{center}
\end{figure}

Then, we obtain 10 relations of the link $L_o$:
\begin{eqnarray}
u_2 u_6 u_1^{-1} u_6^{-1}=1 &\Leftrightarrow& u_2 = u_6 u_1 u_6^{-1} \\
u_6 u_2 u_5^{-1} u_2^{-1} =1 &\Leftrightarrow& u_5 = u_2^{-1} u_6 u_2\\
u_7 u_2 u_7^{-1} u_3^{-1}=1 &&\\
u_3 u_1 u_6^{-1} u_4^{-1} u_6 u_1^{-1}=1 &&\\
v_2 u_4 v_1^{-1} u_4^{-1}=1 &\Leftrightarrow& u_4 = v_2^{-1} u_4 v_1\\
u_4 v_2 u_5^{-1} v_2^{-1} =1 &\Leftrightarrow& u_4 = v_2 u_5 v_2^{-1}\\
u_7 u_3 u_1 u_6^{-1} u_1^{-1} u_3^{-1}=1 &\Leftrightarrow& u_7 = (u_3 u_1) u_6 (u_3 u_1)^{-1} \\
v_1 u_7 v_1^{-1} u_8^{-1}=1  &\Leftrightarrow& u_8 = v_1 u_7 v_1^{-1}\\
v_2 u_8 v_1^{-1} u_8^{-1}=1 &\Leftrightarrow& v_2 = u_8 v_1 u_8^{-1}\\
u_5 u_8 u_5^{-1} u_4 u_1^{-1} u_4^{-1}=1&\Leftrightarrow& u_1 = (u_5^{-1} u_4)^{-1} u_8  (u_5^{-1} u_4)
\end{eqnarray}
From the relations (10), (11), (14), (15), (16), (17), (18) and (19), we can write 
\begin{align*}
u_1 =& (u_3 u_1) u_6 (u_3 u_1)^{-1}[v_1, (u_3 u_1) u_6 (u_3 u_1)^{-1}],\\
u_2 =& u_6 (u_3 u_1) u_6 (u_3 u_1)^{-1}[v_1, (u_3 u_1) u_6 (u_3 u_1)^{-1}] u_6^{-1},\\
u_3 =& (u_3 u_1) [(u_3 u_1) u_6 (u_3 u_1)^{-1}, v_1] (u_3 u_1) u_6^{-1} (u_3 u_1)^{-1},\\
u_4 =& v_1 [(u_3 u_1) u_6 (u_3 u_1)^{-1}, v_1] u_6 [(u_3 u_1) u_6 (u_3 u_1)^{-1}, v_1] [(u_3 u_1), u_6^{-1}] u_6\\
& [u_6^{-1}, (u_3 u_1)] [v_1, (u_3 u_1) u_6 (u_3 u_1)^{-1}] u_6^{-1}  [v_1, (u_3 u_1) u_6 (u_3 u_1)^{-1}] v_1^{-1},\\
u_5 =& u_6 [(u_3 u_1) u_6 (u_3 u_1)^{-1}, v_1] [(u_3 u_1), u_6^{-1}] u_6 [u_6^{-1}, (u_3 u_1)] [v_1, (u_3 u_1) u_6 (u_3 u_1)^{-1}] u_6^{-1},\\
u_7 =& (u_3 u_1) u_6 (u_3 u_1)^{-1}, \\
u_8 =& v_1 (u_3 u_1) u_6 (u_3 u_1)^{-1} v_1^{-1},\\
v_2 =& v_1 [(u_3 u_1) u_6 (u_3 u_1)^{-1}, v_1],
\end{align*}
with generators $u_3 u_1 , u_6$ and $v_1$.
Since we can get relation (13) from relations (12) and (14), we can give a presentation $\pi_1(S^3 - L_o)$ by
\[
\pi_1(S^3 - L_o)=\langle u_6, u_3 u_1,v_1  \ | \  r_1, r_2 \rangle.
\]
where
\begin{align*}
r_1=& (u_3 u_1) u_6 (u_3 u_1)^{-1} u_6 (u_3 u_1) u_6 (u_3 u_1)^{-1} [v_1, (u_3 u_1) u_6 (u_3 u_1)^{-1}] u_6^{-1} [v_1, (u_3 u_1) u_6 (u_3 u_1)^{-1}] (u_3 u_1)^{-1},\\
r_2=& v_1 (u_3 u_1) u_6 (u_3 u_1)^{-1} v_1^2(u_3 u_1) u_6^{-1} (u_3 u_1)^{-1} v_1^{-1} u_6 [(u_3 u_1) u_6 (u_3 u_1)^{-1}, v_1] [(u_3 u_1), u_6^{-1}] u_6 [u_6^{-1}, (u_3 u_1)] \\
&[v_1, (u_3 u_1) u_6 (u_3 u_1)^{-1}] u_6^{-1}  [v_1, (u_3 u_1) u_6 (u_3 u_1)^{-1}] v_1^{-1}  [(u_3 u_1) u_6 (u_3 u_1)^{-1}, v_1] u_6 [(u_3 u_1) u_6 (u_3 u_1)^{-1}, v_1] \\
&[(u_3 u_1), u_6^{-1}] u_6^{-1} [u_6^{-1}, (u_3 u_1)] [v_1, (u_3 u_1) u_6 (u_3 u_1)^{-1}] u_6^{-1}  [v_1, (u_3 u_1) u_6 (u_3 u_1)^{-1}] v_1^{-1}.
\end{align*}

Since we can get $K_n$ from $L_o$ by $1/\frac{n+1}{2}$-surgery,
we can obtain the presentation of knot group $G(K_n)$ by using a presentation of $\pi_1(S^3 - L_o)$, i.e. we have
\begin{eqnarray*}
v_1 = (u_8^{-1} u_4)^{\frac{n+1}{2}}
= \bigl\{[(u_3 u_1), u_6^{-1}][(u_3 u_1)^{-1} , u_6]  \bigr\}^{\frac{n+1}{2}}.
\end{eqnarray*}
Then, we can reduce $r_2$ and get
\begin{eqnarray*}
G(K_n)=
\langle x , z  \ | \   x z x z^{-1} [y , z x z^{-1}] = z x^{-1} [ z x z^{-1}, y] x \rangle,
\end{eqnarray*}
where $x=u_6, z = u_3 u_1$ and $y= \{[z, x^{-1}][z^{-1} , x]  \}^{(n+1)/2}$.

By using this presentation, we get
\begin{align*}
\frac{\partial r}{\partial x}  =&
1+ z x^{-1} - z x^{-1} [z x z^{-1},y] + x z -z x^{-1} z + z x^{-1} (z x z^{-1}) y z x^{-1}
 + x (z x z^{-1}) y z\\
 & -x (z x z^{-1}) y (z x z^{-1}) y^{-1} z x^{-1}
 + \{ z x^{-1} [z x z^{-1}, y] + (x  - z x^{-1}) (z x z^{-1}) -x (z x  z^{-1}) y (z x z^{-1}) y^{-1} \} \frac{\partial y}{\partial x} .
\end{align*}

We now compute $\Phi \left(\frac{\partial r}{\partial x} \right)$.
Suppose $\rho$ be a $SL_2(\bC)$ representation of $G(K_n)$.
Then we put 
\begin{eqnarray*}
X=\rho(x),\  Y=\rho(y), \  Z=\rho(z), \
W=\rho( [z , x^{-1}][z^{-1} , x] ). 
\end{eqnarray*}
Since $\alpha(x)=t , \alpha(z)=t^2$, we can write $\Phi \left(\frac{\partial y}{\partial x} \right) $ by
\begin{eqnarray*}
A B (t^{-2} X Z^{-1} X^{-1} + t^{-1} W^{-1} (Z X Z^{-1})^{-1} - E -t W^{-1} Z X^{-1} ), 
\end{eqnarray*}
where
\[
A=
\begin{cases}
E & n> -1\\
-Y & n< -1
\end{cases}\ ,\ 
B=\sum_{i=1}^{|(n+1)/2|} W^i.
\]

\begin{prop}
We have
\begin{eqnarray*}
\Phi \left(\frac{\partial r}{\partial x} \right)  = t^{-1} P + t^0 Q + t^1 R + t^2 S + t^3 T + t^4 U,
\end{eqnarray*}
where
\begin{align*}
P =&\ (X Z X^{-1}) W^{-1} Y (Z X Z^{-1})^{-1} Y^{-1} A B (X Z X^{-1})^{-1},\\
Q =&\ E + (X Z X^{-1}) W^{-1} Y (Z X Z^{-1})^{-1} Y^{-1} A B W^{-1} (Z X Z^{-1})^{-1} + X (Z X Z^{-1}) A B  (X Z X^{-1})^{-1}\\
& - (X Z X^{-1}) W^{-1} A B (X Z X^{-1})^{-1},\\
R =&\ Z X^{-1} -Z X^{-1} [Z X Z^{-1}, Y] - (X Z X^{-1}) W^{-1} Y (Z X Z^{-1})^{-1} Y^{-1} A B \\
&+ X (Z X Z^{-1}) A B W^{-1} (Z X Z^{-1})^{-1} - (X Z X^{-1}) W^{-1} A B W^{-1} (Z X Z^{-1})^{-1}\\
& - X (Z X Z^{-1}) Y (Z X Z^{-1}) Y^{-1} A B (X Z X^{-1})^{-1},\\
S =&\ -(X Z X^{-1}) W^{-1} Y (Z X Z^{-1})^{-1} Y^{-1} A B  W^{-1} Z X^{-1} - X (Z X Z^{-1}) A B\\
& + (X Z X^{-1}) W^{-1} A B - X (Z X Z^{-1}) Y (Z X Z^{-1}) Y^{-1} A B W^{-1} (Z X Z^{-1})^{-1},\\
T =&\ X Z -Z X^{-1} Z + (X Z X^{-1}) W^{-1} Y Z X^{-1} - X (Z X Z^{-1}) A B W^{-1} Z X^{-1}\\
& + (X Z X^{-1}) W^{-1} A B W^{-1} Z X^{-1} + X (Z X Z^{-1}) Y (Z X Z^{-1}) Y^{-1} A B,\\
U =&\ X (Z X Z^{-1}) Y Z - X (Z X Z^{-1}) Y (Z X Z^{-1}) Y^{-1} Z X^{-1} \\
&+ X (Z X Z^{-1}) Y (Z X Z^{-1}) Y^{-1} A B W^{-1} Z X^{-1} .
\end{align*}
\end{prop}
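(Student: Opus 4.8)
The plan is to obtain $\Phi(\partial r/\partial x)$ by applying the ring homomorphism $\Phi=(\rho\otimes\mathfrak a)\circ\phi$ directly to the displayed expression for $\partial r/\partial x$, and then sorting the resulting matrices by powers of $t$. The basic observation is that for a single group element $g$ one has $\Phi(g)=\mathfrak a(g)\,\rho(g)=t^{\alpha(g)}\rho(g)$, where the abelianization $\alpha$ is determined by $\alpha(x)=1$ and $\alpha(z)=2$; crucially $\alpha(y)=0$, since $y=\{[z,x^{-1}][z^{-1},x]\}^{(n+1)/2}$ lies in the commutator subgroup, so that $Y=\rho(y)=W^{(n+1)/2}$ and the matrix $AB$ carries no power of $t$. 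Thus every monomial appearing in $\partial r/\partial x$ acquires a definite $t$-degree, and $\Phi$ can be computed term by term.

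First I would treat the eight explicit summands of $\partial r/\partial x$, namely those not multiplied by $\partial y/\partial x$. A quick degree count gives $\alpha=0$ for the term $1$, $\alpha=1$ for $zx^{-1}$ and $-zx^{-1}[zxz^{-1},y]$, $\alpha=3$ for $xz$, $-zx^{-1}z$ and $zx^{-1}(zxz^{-1})yzx^{-1}$, and $\alpha=4$ for $x(zxz^{-1})yz$ and $-x(zxz^{-1})y(zxz^{-1})y^{-1}zx^{-1}$. Hence these contribute only to the coefficients of $t^0,t^1,t^3,t^4$, i.e.\ to $Q,R,T,U$. Next I would handle the product term: factoring the common $t$-power out of each of the three summands in the bracket $\{\,zx^{-1}[zxz^{-1},y]+(x-zx^{-1})(zxz^{-1})-x(zxz^{-1})y(zxz^{-1})y^{-1}\,\}$ shows that $\Phi$ of the bracket is a sum of terms in degrees $t^1,t^2,t^3$; multiplying on the right by the given expansion $\Phi(\partial y/\partial x)=AB\,(t^{-2}XZ^{-1}X^{-1}+t^{-1}W^{-1}(ZXZ^{-1})^{-1}-E-tW^{-1}ZX^{-1})$ produces cross terms spanning $t^{-1}$ through $t^4$. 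Collecting all contributions together with the explicit part then yields exactly the six coefficients $P,Q,R,S,T,U$, with $P$ arising solely from (bracket degree $1$)$\times$($t^{-2}$ part) and $U$ from the explicit degree-$4$ terms together with (bracket degree $3$)$\times$($t$ part).

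The main obstacle is not the degree bookkeeping but the algebraic simplification of each matrix coefficient into the stated closed form. The recurring device is to expand commutators, e.g.\ $[ZXZ^{-1},Y]=(ZXZ^{-1})Y(ZXZ^{-1})^{-1}Y^{-1}$, and then to invoke the defining identity $W=ZX^{-1}Z^{-1}XZ^{-1}XZX^{-1}$ (equivalently $ZX^{-1}ZXZ^{-1}=XZX^{-1}W^{-1}$) to absorb strings in $X,Z$ into powers of $W$. This is precisely what turns the raw expression $ZX^{-1}[ZXZ^{-1},Y]\,AB\,XZ^{-1}X^{-1}$ for the $t^{-1}$ coefficient into the claimed $P=(XZX^{-1})W^{-1}Y(ZXZ^{-1})^{-1}Y^{-1}AB(XZX^{-1})^{-1}$, using $XZ^{-1}X^{-1}=(XZX^{-1})^{-1}$ at the end.

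I would verify the two extreme coefficients $P$ and $U$ first, since each comes from a single product and is therefore cleanest, and then push the same manipulations through the four middle coefficients $Q,R,S,T$, where several contributions overlap. Here I would also use $\det X=\det Z=1$ and the fact that $AB$ (being a polynomial in $W$ times $E$ or $-Y=-W^{(n+1)/2}$) commutes with $W$ and $Y$, which lets one slide $AB$ past the relevant factors when matching the stated forms. I expect the bulk of the effort to lie in this term-by-term matching, which I would organize as a table of contributions indexed by the power of $t$.
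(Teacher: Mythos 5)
Your proposal is correct and follows essentially the same route as the paper, which states this proposition as the direct outcome of applying $\Phi$ term by term to the displayed Fox derivative with $\frak{a}(x)=t$, $\frak{a}(z)=t^2$ (hence $\frak{a}(y)=1$, since $y$ is a product of commutators), multiplying by the given expansion of $\Phi\left(\frac{\partial y}{\partial x}\right)$, and collecting powers of $t$. Your degree bookkeeping (explicit summands in degrees $0,1,3,4$; the bracket in degrees $1,2,3$ against the $t^{-2},\dots,t^{1}$ parts of $\Phi\left(\frac{\partial y}{\partial x}\right)$) together with the identity $ZX^{-1}(ZXZ^{-1})=(XZX^{-1})W^{-1}$ reproduces exactly the stated coefficients $P,\dots,U$.
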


\vspace{11mm}

We have
\begin{align*}
\left| \Phi \left(\frac{\partial r}{\partial x} \right) \right| 
=\det (t^{-1} P + t^0 Q + t^1 R + t^2 S + t^3 T + t^4 U)
= \sum_{i=-2}^{8} t^i l_i,
\end{align*}
where
\begin{align*}
l_{-2} &= |P|, \\
l_{-1} &= \mathrm{tr} P Q^{*}, \\
l_{0} &= |Q| + \mathrm{tr} P R^{*}, \\
l_{1} &= \mathrm{tr} P S^{*} + \mathrm{tr} Q R^{*},  \\
l_{2} &= |R| + \mathrm{tr} P T^{*} + \mathrm{tr} Q S^{*}, \\
l_{3} &= \mathrm{tr} P U^{*} + \mathrm{tr} Q T^{*} +\mathrm{tr} R S^{*}, \\
l_{4} &= |S| + \mathrm{tr} Q U^{*} + \mathrm{tr} R T^{*}, \\
l_{5} &= \mathrm{tr} R U^{*} + \mathrm{tr} S T^{*}, \\
l_{6} &= |T|+ \mathrm{tr} S U^{*}, \\
l_{7} &= \mathrm{tr} T U^{*}, \\
l_{8} &= |U|.
\end{align*}

Since we have
\begin{eqnarray*}
\left| \Phi ( z - 1 ) \right| 
= t^0 -  t^2 \mathrm{tr}Z + t^4,
\end{eqnarray*}
we obtain
\begin{align*}
\Delta_{K_n, \rho} (t)
&= \frac{\left| \displaystyle \Phi \left(\frac{\partial r}{\partial x} \right) \right|}{\left| \Phi ( z - 1 ) \right|} \\
&= \frac{\displaystyle \sum_{i=-2}^{8} t^i l_i}{t^0 -  t^2 \mathrm{tr}Z + t^4}\\
& \sim  \sum_{i=0}^{2} \lambda_i (t^i + t^{6-i})  + \lambda_3 t^3 ,
\end{align*}
where
\begin{align*}
\lambda_{0} &= l_{-2} = l_{8}, \\
\lambda_{1} &= l_{-1} = l_{7}, \\
\lambda_{2} &= l_{0} + \mathrm{tr}Z \lambda_0 = l_{6} + \mathrm{tr}Z \lambda_0, \\
\lambda_{3} &= l_{1} + \mathrm{tr}Z \lambda_1 = l_{5} + \mathrm{tr}Z \lambda_1.
\end{align*}

\appendix
\section{Calculations of $R_{i}$ and $R'_{i}$ defined in Section 2}

In Section 2, we defined $R_{-1}, R_{0}, \ldots , R_{2k}$ and $R'_{1}, R'_{2}, \ldots , R'_{2k}$.
In this appendix, we give these calculation.

\begin{align*}
R_{-1} & = \Phi \left( \frac{\partial r_{-1}}{\partial a} \right)=
\begin{cases}
\displaystyle t^{-1} \sum_{j=0}^{m_0-1} X_{-1}(A B)^j - t^{0} \Bigl(\sum_{j=0}^{m_0-1} (A B)^j + (AB)^{m_0} \Bigr)& $if$ \  m_0>0\\
\displaystyle -t^{-1} \sum_{j=m_0}^{-1} X_{-1}(A B)^j + t^{0} \Bigl(\sum_{j=m_0}^{-1} (A B)^j - (AB)^{m_0} \Bigr) & $if$ \  m_0<0\\
\end{cases},\\
R_{0} &= \Phi \left( \frac{\partial r_{0}}{\partial a} \right) =
\begin{cases}
\displaystyle -t^0 \sum_{j=0}^{m_0-1} (A B)^j + t^1 \sum_{j=0}^{m_0-1} X_0 (A B)^j & $if$ \  m_0>0\\
\displaystyle t^0 \sum_{j=m_0}^{-1} (A B)^j - t^1 \sum_{j=m_0}^{-1} X_0 (A B)^j& $if$ \  m_0<0\\
\end{cases} ,\\
R_{1} & = \Phi \left( \frac{\partial r_{1}}{\partial a} \right) \\&=
\begin{cases}
\displaystyle t^0 X_1\sum_{j=1}^{m_1} (X_{-1}^{-1} A)^j A^{-1} - t^1 \sum_{j=1}^{m_1} (X_{-1}^{-1} A)^j A^{-1} & $if$ \  m_1>0\\
\displaystyle -t^0 X_1\sum_{j=m_1+1}^{0} (X_{-1}^{-1} A)^j A^{-1} + t^1 \sum_{j=m_1+1}^{0} (X_{-1}^{-1} A)^j A^{-1} & $if$ \  m_1<0\\
\end{cases} ,\\
R_{2} & = \Phi \left( \frac{\partial r_{2}}{\partial a} \right)\\ & =
\begin{cases}
\displaystyle -t^1\Bigl(\sum_{j=1}^{m_1} (X_{-1}^{-1} A)^j - (X_{-1}^{-1} A)^{m_1} \Bigr)A^{-1} + t^2 X_2 \sum_{j=1}^{m_1} (X_{-1}^{-1} A)^j A^{-1} & $if$ \  m_1>0\\
\displaystyle t^1 \Bigl(\sum_{j=m_1+1}^{0} (X_{-1}^{-1} A)^j + (X_{-1}^{-1} A)^{m_1} \Bigr)A^{-1} - t^2 X_2 \sum_{j=m_1+1}^{0} (X_{-1}^{-1} A)^j A^{-1} & $if$ \  m_1<0\\
\end{cases}, \\
-R'_{1} & =  -\Phi \left( \frac{\partial r_{1}}{\partial x_{-1}} \right) \\&=
\begin{cases}
\displaystyle t^0 X_1\Bigl(\sum_{j=1}^{m_1} (X_{-1}^{-1} A)^j + (X_{-1}^{-1} A)^{m_1+1} \Bigr)A^{-1} - t^1 \sum_{j=1}^{m_1} (X_{-1}^{-1} A)^j A^{-1} & $if$ \  m_1>0\\
\displaystyle -t^0 X_1 \Bigl(\sum_{j=m_1+1}^{0} (X_{-1}^{-1} A)^j - (X_{-1}^{-1} A)^{m_1+1} \Bigr)A^{-1} + t^1 \sum_{j=m_1+1}^{0} (X_{-1}^{-1} A)^j A^{-1} & $if$ \  m_1<0\\
\end{cases} ,\\
-R'_{2} & =  -\Phi \left( \frac{\partial r_{2}}{\partial x_{-1}} \right) \\ &=
\begin{cases}
\displaystyle -t^1 \sum_{j=1}^{m_1} (X_{-1}^{-1} A)^j A^{-1} + t^2 X_2 \sum_{j=1}^{m_1} (X_{-1}^{-1} A)^j A^{-1} & $if$ \  m_1>0\\
\displaystyle t^1 \sum_{j=m_1+1}^{0} (X_{-1}^{-1} A)^j A^{-1} - t^2 X_2 \sum_{j=m_1+1}^{0} (X_{-1}^{-1} A)^j A^{-1} & $if$ \  m_1<0\\
\end{cases} ,
\end{align*}
and if $i$ is even, 
\begin{align*}
&-R_{2i-1}\\=&\ \Phi \left( \frac{\partial r_{2i-1}}{\partial x_{2i-4}} \right) \\ =&
\begin{cases}
\displaystyle -t^{-2} X_{2i-1} \sum_{j=1}^{m_i} (X_{2i-3} X_{2i-4})^j X_{2i-4}^{-1} + t^{-1} \sum_{j=1}^{m_i} (X_{2i-3} X_{2i-4})^j X_{2i-4}^{-1} & $if$ \  m_i>0\\
\displaystyle t^{-2} X_{2i-1} \sum_{j=m_i + 1}^{0} (X_{2i-3} X_{2i-4})^j X_{2i-4}^{-1} - t^{-1} \sum_{j=m_i+1}^{0} (X_{2i-3} X_{2i-4})^j X_{2i-4}^{-1} & $if$ \  m_i<0\\
\end{cases} ,
\end{align*}
\begin{align*}
&-R'_{2i-1}\\=& \Phi \left( \frac{\partial r_{2i-1}}{\partial x_{2i-3}} \right)\\ =&
\begin{cases}
\displaystyle -t^{-1} X_{2i-1} X_{2i-3}^{-1} \sum_{j=1}^{m_i} (X_{2i-3} X_{2i-4})^j X_{2i-4}^{-1} 
+ t^{0} X_{2i-3}^{-1}\Bigl(\sum_{j=1}^{m_i} (X_{2i-3} X_{2i-4})^j + (X_{2i-3} X_{2i-4})^{m_i+1}\Bigr) X_{2i-4}^{-1}\\
& \hspace{-20mm} $if$ \  m_i>0\\
\displaystyle t^{-1} X_{2i-1} X_{2i-3}^{-1} \sum_{j=m_i +1}^{0} (X_{2i-3} X_{2i-4})^j X_{2i-4}^{-1} - t^{0} X_{2i-3}^{-1}\Bigl(\sum_{j=m_i+1}^{0} (X_{2i-3} X_{2i-4})^j -(X_{2i-3} X_{2i-4})^{m_i+1} \Bigr) X_{2i-4}^{-1}\\
& \hspace{-20mm} $if$ \  m_i<0\\
\end{cases} ,\\
&-R_{2i}\\=&\ \Phi \left( \frac{\partial r_{2i}}{\partial x_{2i-4}} \right), \\ =&
\begin{cases}
\displaystyle t^{-1}  \sum_{j=1}^{m_i } (X_{2i-3} X_{2i-4})^j X_{2i-4}^{-1} - t^{0} X_{2i}\Bigl(\sum_{j=1}^{m_i} (X_{2i-3} X_{2i-4})^j - (X_{2i-3} X_{2i-4})^{m_i}\Bigr)X_{2i-4}^{-1} & $if$ \  m_i>0\\
\displaystyle -t^{-1} \sum_{j=m_i+1 }^{0} (X_{2i-3} X_{2i-4})^j X_{2i-4}^{-1} + t^{0} X_{2i}\Bigl(\sum_{j=m_i+1}^{0} (X_{2i-3} X_{2i-4})^j + (X_{2i-3} X_{2i-4})^{m_i} \Bigr)X_{2i-4}^{-1} & $if$ \  m_i<0\\
\end{cases} ,\\
&-R'_{2i}\\=& \Phi \left( \frac{\partial r_{2i}}{\partial x_{2i-3}} \right)\\ =&
\begin{cases}
\displaystyle t^{0} X_{2i-3}^{-1} \sum_{j=1}^{m_i} (X_{2i-3} X_{2i-4})^j X_{2i-4}^{-1} - t^{1} X_{2i} X_{2i-3}^{-1} \sum_{j=1}^{m_i} (X_{2i-3} X_{2i-4})^j X_{2i-4}^{-1} & $if$ \  m_i>0\\
\displaystyle -t^{0} X_{2i-3}^{-1} \sum_{j=m_i +1}^{0} (X_{2i-3} X_{2i-4})^j X_{2i-4}^{-1} + t^{1} X_{2i} X_{2i-3}^{-1} \sum_{j=m_i+1}^{0} (X_{2i-3} X_{2i-4})^j X_{2i-4}^{-1} & $if$ \  m_i<0\\
\end{cases} ,
\end{align*}
for $2 \leq i \leq k$.

If $i$ is odd, 
\begin{align*}
&-R_{2i-1}\\=&\ \Phi \left( \frac{\partial r_{2i-1}}{\partial x_{2i-4}} \right) \\ =&
\begin{cases}
\displaystyle t^{-1} X_{2i-1} \sum_{j=1}^{m_i} (X_{2i-3}^{-1} X_{2i-4}^{-1})^j - t^{0} \sum_{j=1}^{m_i} (X_{2i-3}^{-1} X_{2i-4}^{-1})^j & $if$ \  m_i>0\\
\displaystyle -t^{-1} X_{2i-1} \sum_{j=m_i + 1}^{0} (X_{2i-3}^{-1} X_{2i-4}^{-1})^j + t^{0} \sum_{j=m_i+1}^{0} (X_{2i-3}^{-1} X_{2i-4}^{-1})^j  & $if$ \  m_i<0\\
\end{cases} ,\\
&-R'_{2i-1}\\=& \Phi \left( \frac{\partial r_{2i-1}}{\partial x_{2i-3}} \right)\\ =&
\begin{cases}
\displaystyle t^{0} X_{2i-1}  \Bigl(\sum_{j=1}^{m_i} (X_{2i-3}^{-1} X_{2i-4}^{-1})^j + (X_{2i-3}^{-1} X_{2i-4}^{-1})^{m_i+1}\Bigr) X_{2i-4} - t^{1} \sum_{j=1}^{m_i} (X_{2i-3}^{-1} X_{2i-4}^{-1})^j X_{2i-4} & $if$ \  m_i>0\\
\displaystyle -t^{0} X_{2i-1} \Bigl(\sum_{j=m_i +1}^{0} (X_{2i-3}^{-1} X_{2i-4}^{-1})^j -(X_{2i-3}^{-1} X_{2i-4}^{-1})^{m_i+1} \Bigr) X_{2i-4} + t^{1}\sum_{j=m_i + 1}^{0} (X_{2i-3}^{-1} X_{2i-4}^{-1})^j X_{2i-4} & $if$ \  m_i<0\\
\end{cases} ,\\
&-R_{2i}\\=&\ \Phi \left( \frac{\partial r_{2i}}{\partial x_{2i-4}} \right), \\ =&
\begin{cases}
\displaystyle -t^{0}  \Bigl(\sum_{j=1}^{m_i } (X_{2i-3}^{-1} X_{2i-4}^{-1})^j - (X_{2i-3}^{-1} X_{2i-4}^{-1})^{m_i}\Bigr) + t^{1} X_{2i}\sum_{j=1}^{m_i} (X_{2i-3}^{-1} X_{2i-4}^{-1})^j  & $if$ \  m_i>0\\
\displaystyle t^{0} \Bigl(\sum_{j=m_i+1 }^{0} (X_{2i-3}^{-1} X_{2i-4}^{-1})^j + (X_{2i-3}^{-1} X_{2i-4}^{-1})^{m_i} \Bigr) - t^{1} X_{2i} \sum_{j=m_i+1}^{0} (X_{2i-3}^{-1} X_{2i-4}^{-1})^j  & $if$ \  m_i<0\\
\end{cases} ,
\end{align*}
\begin{align*}
&-R'_{2i}\\=& \Phi \left( \frac{\partial r_{2i}}{\partial x_{2i-3}} \right)\\ =&
\begin{cases}
\displaystyle -t^{1}  \sum_{j=1}^{m_i} (X_{2i-3}^{-1} X_{2i-4}^{-1})^j X_{2i-4} + t^{2} X_{2i} \sum_{j=1}^{m_i} (X_{2i-3}^{-1} X_{2i-4}^{-1})^j X_{2i-4} & $if$ \  m_i>0\\
\displaystyle t^{1}  \sum_{j=m_i +1}^{0} (X_{2i-3}^{-1} X_{2i-4}^{-1})^j X_{2i-4} - t^{2} X_{2i} \sum_{j=m_i+1}^{0} (X_{2i-3}^{-1} X_{2i-4}^{-1})^j X_{2i-4} & $if$ \  m_i<0\\
\end{cases} ,
\end{align*}
for $3 \leq i \leq k$.

\section{Calculations of $R_{i}, R'_{i}$ and $S_{i}, S'_{i}$ defined in section 3}
In Section 3, we defined $R_{-4}$ which corresponds to first rational tangle $\beta_1/\alpha_1$, $R_{-1}, R_{0}, \ldots , R_{2k}$ and $R'_{-1}, R'_{0}, \ldots , R'_{2k}$ which corresponds to scond rational tangle $\beta_2/\alpha_2$, and $ S_{1}, S_{2}, \ldots , S_{2l}$ and $S'_{1}, R'_{2}, \ldots , S'_{2l}$ which corresponds to third rational tangle $\beta_3/\alpha_3$.
In this appendix, we give these calculation.

\begin{align*}
R_{-4} & = \Phi \left( \frac{\partial r_{-4}}{\partial a} \right)=
\begin{cases}
\displaystyle t^{-2} X_{-4} C^{-1}& $if$ \  \beta_1=1\\
\displaystyle -t^{-2} X_{-4} A^{-1} + t^{-1} A^{-1} (E + CA^{-1}) & $if$ \  \beta_1=-1\\
\end{cases}
\end{align*}
and if $i$ is even, 
\begin{align*}
-R_{2i-1}=&\ \Phi \left( \frac{\partial r_{2i-1}}{\partial x_{2i-4}} \right) \\ =&
\begin{cases}
\displaystyle t^{1} \sum_{j=0}^{m_i-1} (X_{2i-3} X_{2i-4})^j X_{2i-3} - t^{2} X_{2i-1} \sum_{j=0}^{m_i-1} (X_{2i-3} X_{2i-4})^j X_{2i-3} & $if$ \  m_i>0\\
\displaystyle -t^{1} \sum_{j=m_i}^{-1} (X_{2i-3} X_{2i-4})^j X_{2i-3} + t^{2} X_{2i-1} \sum_{j=m_i}^{-1} (X_{2i-3} X_{2i-4})^j X_{2i-3} & $if$ \  m_i<0\\
\end{cases} ,\\
-R'_{2i-1}=& \Phi \left( \frac{\partial r_{2i-1}}{\partial x_{2i-3}} \right)\\ =&
\begin{cases}
\displaystyle t^{0} \Bigl(\sum_{j=0}^{m_i-1} (X_{2i-3} X_{2i-4})^j + (X_{2i-3} X_{2i-4})^{m_i}\Bigr)  - t^{1} X_{2i-1} \sum_{j=0}^{m_i-1} (X_{2i-3} X_{2i-4})^j & $if$ \  m_i>0\\
\displaystyle -t^{0} \Bigl(\sum_{j=m_i}^{-1} (X_{2i-3} X_{2i-4})^j - (X_{2i-3} X_{2i-4})^{m_i}\Bigr) + t^{1} X_{2i-1} \sum_{j=m_i}^{-1} (X_{2i-3} X_{2i-4})^j & $if$ \  m_i<0\\
\end{cases} ,\\
-R_{2i}=&\ \Phi \left( \frac{\partial r_{2i}}{\partial x_{2i-4}} \right), \\ =&
\begin{cases}
\displaystyle -t^{0} \Bigl(X_{2i} \sum_{j=0}^{m_i-1} (X_{2i-3} X_{2i-4})^j X_{2i-3} - (X_{2i-3} X_{2i-4})^{m_i}\Bigr)  + t^{1} \sum_{j=0}^{m_i-1} (X_{2i-3} X_{2i-4})^j X_{2i-3} & $if$ \  m_i>0\\
\displaystyle t^{0} \Bigl(X_{2i} \sum_{j=m_i}^{-1} (X_{2i-3} X_{2i-4})^j X_{2i-3} + (X_{2i-3} X_{2i-4})^{m_i}\Bigr) - t^{1} \sum_{j=m_i}^{-1} (X_{2i-3} X_{2i-4})^j X_{2i-3} & $if$ \  m_i<0\\
\end{cases} ,\\
-R'_{2i}=& \Phi \left( \frac{\partial r_{2i}}{\partial x_{2i-3}} \right)\\ =&
\begin{cases}
\displaystyle -t^{-1} X_{2i} \sum_{j=0}^{m_i-1} (X_{2i-3} X_{2i-4})^j + t^{0} \sum_{j=0}^{m_i-1} (X_{2i-3} X_{2i-4})^j & $if$ \  m_i>0\\
\displaystyle t^{-1} X_{2i} \sum_{j=m_i}^{-1} (X_{2i-3} X_{2i-4})^j - t^{0} \sum_{j=m_i}^{-1} (X_{2i-3} X_{2i-4})^j & $if$ \  m_i<0\\
\end{cases} ,
\end{align*}
for $0 \leq i \leq k$.

If $i$ is odd, 
\begin{align*}
-R_{2i-1}=&\ \Phi \left( \frac{\partial r_{2i-1}}{\partial x_{2i-4}} \right) \\ =&
\begin{cases}
\displaystyle -t^{0}  \sum_{j=1}^{m_i} (X_{2i-3}^{-1} X_{2i-4}^{-1})^j + t^{1} X_{2i-1} \sum_{j=1}^{m_i} (X_{2i-3}^{-1} X_{2i-4}^{-1})^j & $if$ \  m_i>0\\
\displaystyle t^{0} \sum_{j=m_i + 1}^{0} (X_{2i-3}^{-1} X_{2i-4}^{-1})^j - t^{1} X_{2i-1} \sum_{j=m_i+1}^{0} (X_{2i-3}^{-1} X_{2i-4}^{-1})^j  & $if$ \  m_i<0\\
\end{cases} ,\\
-R'_{2i-1}=& \Phi \left( \frac{\partial r_{2i-1}}{\partial x_{2i-3}} \right)\\ =&
\begin{cases}
\displaystyle - t^{-1} \sum_{j=1}^{m_i} (X_{2i-3}^{-1} X_{2i-4}^{-1})^j X_{2i-4} + t^{0} X_{2i-1}  \Bigl(\sum_{j=1}^{m_i} (X_{2i-3}^{-1} X_{2i-4}^{-1})^j + (X_{2i-3}^{-1} X_{2i-4}^{-1})^{m_i+1}\Bigr) X_{2i-4}  & $if$ \  m_i>0\\
\displaystyle t^{-1}\sum_{j=m_i + 1}^{0} (X_{2i-3}^{-1} X_{2i-4}^{-1})^j X_{2i-4} -t^{0} X_{2i-1} \Bigl(\sum_{j=m_i +1}^{0} (X_{2i-3}^{-1} X_{2i-4}^{-1})^j -(X_{2i-3}^{-1} X_{2i-4}^{-1})^{m_i+1} \Bigr) X_{2i-4}  & $if$ \  m_i<0\\
\end{cases} ,\\
-R_{2i}=&\ \Phi \left( \frac{\partial r_{2i}}{\partial x_{2i-4}} \right), \\ =&
\begin{cases}
\displaystyle  t^{-1} X_{2i}\sum_{j=1}^{m_i} (X_{2i-3}^{-1} X_{2i-4}^{-1})^j  -t^{0}  \Bigl(\sum_{j=1}^{m_i } (X_{2i-3}^{-1} X_{2i-4}^{-1})^j - (X_{2i-3}^{-1} X_{2i-4}^{-1})^{m_i}\Bigr) & $if$ \  m_i>0\\
\displaystyle  - t^{-1} X_{2i} \sum_{j=m_i+1}^{0} (X_{2i-3}^{-1} X_{2i-4}^{-1})^j  + t^{0} \Bigl(\sum_{j=m_i+1 }^{0} (X_{2i-3}^{-1} X_{2i-4}^{-1})^j + (X_{2i-3}^{-1} X_{2i-4}^{-1})^{m_i} \Bigr) & $if$ \  m_i<0\\
\end{cases} ,\\
-R'_{2i}=& \Phi \left( \frac{\partial r_{2i}}{\partial x_{2i-3}} \right)\\ =&
\begin{cases}
\displaystyle  t^{-2} X_{2i} \sum_{j=1}^{m_i} (X_{2i-3}^{-1} X_{2i-4}^{-1})^j X_{2i-4} - t^{-1}  \sum_{j=1}^{m_i} (X_{2i-3}^{-1} X_{2i-4}^{-1})^j X_{2i-4}  & $if$ \  m_i>0\\
\displaystyle - t^{-2} X_{2i} \sum_{j=m_i+1}^{0} (X_{2i-3}^{-1} X_{2i-4}^{-1})^j X_{2i-4} + t^{-1}  \sum_{j=m_i +1}^{0} (X_{2i-3}^{-1} X_{2i-4}^{-1})^j X_{2i-4} & $if$ \  m_i<0\\
\end{cases} ,
\end{align*}
for $0 \leq i \leq k$.

If $i$ is even,
\begin{align*}
-S_{2i-1}=&\ \Phi \left( \frac{\partial s_{2i-1}}{\partial y_{2i-4}} \right) \\ =&
\begin{cases}
\displaystyle -t^{1} \sum_{j=-n_i}^{-1} (Y_{2i-3} Y_{2i-4})^j Y_{2i-3} + t^{2} Y_{2i-1} \sum_{j=-n_i}^{-1} (Y_{2i-3} Y_{2i-4})^j Y_{2i-3} & $if$ \  n_i>0\\
\displaystyle t^{1} \sum_{j=0}^{-n_i-1} (Y_{2i-3} Y_{2i-4})^j Y_{2i-3} - t^{2} Y_{2i-1}  \sum_{j=0}^{-n_i-1} (Y_{2i-3} Y_{2i-4})^j Y_{2i-3}  & $if$ \  n_i<0\\
\end{cases} ,\\
-S'_{2i-1}=& \Phi \left( \frac{\partial s_{2i-1}}{\partial y_{2i-3}} \right)\\ =&
\begin{cases}
\displaystyle -t^{0} \Bigl(\sum_{j=-n_i}^{-1} (Y_{2i-3} Y_{2i-4})^j - (Y_{2i-3} Y_{2i-4})^{-n_i}\Bigr) +t^{1} Y_{2i-1}  \sum_{j=-n_i}^{-1} (Y_{2i-3} Y_{2i-4})^j  & $if$ \  n_i>0\\
\displaystyle t^{0}  \Bigl(\sum_{j=0}^{-n_i-1} (Y_{2i-3} Y_{2i-4})^j +(Y_{2i-3} Y_{2i-4})^{-n_i} \Bigr) - t^{1} Y_{2i-1} \sum_{j=0}^{-n_i-1} (Y_{2i-3} Y_{2i-4})^j  & $if$ \  n_i<0\\
\end{cases} ,\\
-S_{2i}=&\ \Phi \left( \frac{\partial s_{2i}}{\partial y_{2i-4}} \right), \\ =&
\begin{cases}
\displaystyle t^{0} \Bigl(Y_{2i} \sum_{j=-n_i}^{-1} (Y_{2i-3} Y_{2i-4})^j Y_{2i-3} - (Y_{2i-3} Y_{2i-4})^{-n_i}\Bigr) -t^{1} \sum_{j=-n_i}^{-1} (Y_{2i-3} Y_{2i-4})^j  Y_{2i-3} & $if$ \  n_i>0\\
\displaystyle -t^{0}  \Bigl(Y_{2i}\sum_{j=0}^{-n_i-1} (Y_{2i-3} Y_{2i-4})^j Y_{2i-3}-(Y_{2i-3} Y_{2i-4})^{-n_i} \Bigr) + t^{1} \sum_{j=0}^{-n_i-1} (Y_{2i-3} Y_{2i-4})^j Y_{2i-3} & $if$ \  n_i<0\\
\end{cases} ,
\end{align*}
\begin{align*}
-S'_{2i}=& \Phi \left( \frac{\partial s_{2i}}{\partial y_{2i-3}} \right)\\ =&
\begin{cases}
\displaystyle t^{-1} Y_{2i} \sum_{j=-n_i}^{-1} (Y_{2i-3} Y_{2i-4})^j - t^{0}  \sum_{j=-n_i}^{-1} (Y_{2i-3} Y_{2i-4})^j & $if$ \  n_i>0\\
\displaystyle -t^{-1} Y_{2i} \sum_{j=0}^{-n_i-1} (Y_{2i-3} Y_{2i-4})^j + t^{0} \sum_{j=0}^{-n_i-1} (Y_{2i-3} Y_{2i-4})^j  & $if$ \  n_i<0\\
\end{cases} ,
\end{align*}
for $1 \leq i \leq l$.

If $i$ is odd,
\begin{align*}
-S_{2i-1}=&\ \Phi \left( \frac{\partial s_{2i-1}}{\partial y_{2i-4}} \right) \\ =&
\begin{cases}
\displaystyle t^{0} \sum_{j=-n_i+1}^{0} (Y_{2i-3}^{-1} Y_{2i-4}^{-1})^j  - t^{1} Y_{2i-1} \sum_{j=-n_i+1}^{0} (Y_{2i-3}^{-1} Y_{2i-4}^{-1})^j & $if$ \  n_i>0\\
\displaystyle -t^{0} \sum_{j=1}^{-n_i} (Y_{2i-3}^{-1} Y_{2i-4}^{-1})^j  + t^{1} Y_{2i-1}  \sum_{j=1}^{-n_i} (Y_{2i-3}^{-1} Y_{2i-4}^{-1})^j  & $if$ \  n_i<0\\
\end{cases} ,\\
-S'_{2i-1}=& \Phi \left( \frac{\partial s_{2i-1}}{\partial y_{2i-3}} \right)\\ =&
\begin{cases}
\displaystyle t^{-1} \sum_{j=-n_i+1}^{0} (Y_{2i-3}^{-1} Y_{2i-4}^{-1})^j Y_{2i-4} - t^{0} \Bigl( Y_{2i-1} \sum_{j=-n_i+1}^{0} (Y_{2i-3}^{-1} Y_{2i-4}^{-1})^j Y_{2i-4}- (Y_{2i-3}^{-1} Y_{2i-4}^{-1})^{-n_i}\Bigr) & $if$ \  n_i>0\\
\displaystyle -t^{-1}  \sum_{j=1}^{-n_i} (Y_{2i-3}^{-1} Y_{2i-4}^{-1})^j Y_{2i-4} + t^{0}  \Bigl( Y_{2i-1} \sum_{j=1}^{-n_i} (Y_{2i-3}^{-1} Y_{2i-4}^{-1})^j Y_{2i-4} + (Y_{2i-3}^{-1} Y_{2i-4}^{-1})^{-n_i}\Bigr)  & $if$ \  n_i<0\\
\end{cases} ,\\
-S_{2i}=&\ \Phi \left( \frac{\partial s_{2i}}{\partial y_{2i-4}} \right), \\ =&
\begin{cases}
\displaystyle -t^{-1} Y_{2i} \sum_{j=-n_i+1}^{0} (Y_{2i-3}^{-1} Y_{2i-4}^{-1})^j +t^{0} \Bigl( \sum_{j=-n_i+1}^{0}(Y_{2i-3}^{-1} Y_{2i-4}^{-1})^j + (Y_{2i-3}^{-1} Y_{2i-4}^{-1})^{-n_i}\Bigr)  & $if$ \  n_i>0\\
\displaystyle t^{-1}  Y_{2i} \sum_{j=1}^{-n_i} (Y_{2i-3}^{-1} Y_{2i-4}^{-1})^j - t^{0} \Bigl(\sum_{j=1}^{-n_i} (Y_{2i-3}^{-1} Y_{2i-4}^{-1})^j - (Y_{2i-3}^{-1} Y_{2i-4}^{-1})^{-n_i}\Bigr) & $if$ \  n_i<0\\
\end{cases} ,\\
-S'_{2i}=& \Phi \left( \frac{\partial s_{2i}}{\partial y_{2i-3}} \right)\\ =&
\begin{cases}
\displaystyle -t^{-2} Y_{2i} \sum_{j=-n_i+1}^{0} (Y_{2i-3}^{-1} Y_{2i-4}^{-1})^j Y_{2i-4} + t^{-1} \sum_{j=-n_i+1}^{0}(Y_{2i-3}^{-1} Y_{2i-4}^{-1})^j Y_{2i-4} & $if$ \  n_i>0\\
\displaystyle t^{-2} Y_{2i} \sum_{j=1}^{-n_i} (Y_{2i-3}^{-1} Y_{2i-4}^{-1})^j Y_{2i-4} - t^{-1} \sum_{j=1}^{-n_i}  (Y_{2i-3}^{-1} Y_{2i-4}^{-1})^j Y_{2i-4} & $if$ \  n_i<0\\
\end{cases} ,
\end{align*}
for $1 \leq i \leq l$.

\end{document}